\documentclass[11pt, a4paper]{article}

\usepackage[latin1]{inputenc}
\usepackage{amsmath, amsfonts, amssymb}
\usepackage[margin = 2.5cm]{geometry}
\usepackage{graphicx, subfig, centernot,eucal}
\usepackage{booktabs}
\usepackage{multirow}

\usepackage{etoolbox}
\makeatletter
\patchcmd{\@maketitle}{\LARGE \@title}{\LARGE\bfseries\@title}{}{}

\renewcommand{\@seccntformat}[1]{\csname the#1\endcsname.\quad}
\makeatother

\usepackage{hyperref}
\hypersetup{
	colorlinks =true,        % false: boxed links; true: colored links
	linkcolor =blue,         % color of internal links
	citecolor =blue,         % color of links to bibliography
	urlcolor =blue           % color of external links
}

\usepackage{amsthm}
\usepackage{thmtools}
\usepackage[capitalise]{cleveref}
\usepackage{xr}

\newtheorem{theorem}{Theorem}[section]
\newtheorem{lemma}[theorem]{Lemma}

\newtheorem{proposition}[theorem]{Proposition}
\newtheorem{problem}{Problem}
\theoremstyle{definition}
\newtheorem{definition}[theorem]{Definition}
\newtheorem{example}[theorem]{Example}
\newtheorem{remark}[theorem]{Remark}

\usepackage[shortlabels]{enumitem}

\parskip    6pt
\tolerance  3000

\allowdisplaybreaks

 %Set valued map.

\newcommand{\I}{\operatorname{I}}
\newcommand{\Fix}{\operatorname{Fix }}

\usepackage[most]{tcolorbox}
\usepackage[normalem]{ulem}
\usepackage[colorinlistoftodos,prependcaption,textsize=tiny]{todonotes}

%-------------------------------------------------------------------------
\begin{document}

\title{Constraint reduction reformulations for projection algorithms with applications to wavelet construction}

\author{Minh N. Dao\thanks{Centre for Informatics and Applied Optimization, School of Engineering, Information Technology and Physical Sciences, Federation University Australia, Ballarat 3353, Australia.
\mbox{E-mail:~\href{mailto:m.dao@federation.edu.au}{m.dao@federation.edu.au}}}
\and
Neil D. Dizon\thanks{School of Mathematical and Physical Sciences, University of Newcastle, Callaghan 2308, Australia.
\mbox{E-mail:~\href{mailto:neil.dizon@newcastle.edu.au}{neilkristofer.dizon@uon.edu.au}}},
\and
Jeffrey A. Hogan\thanks{School of Mathematical and Physical Sciences, University of Newcastle, Callaghan 2308, Australia.
\mbox{E-mail:~\href{jeff.hogan@newcastle.edu.au}{jeff.hogan@newcastle.edu.au}}}, 
\and
Matthew K. Tam\thanks{School of Mathematics and Statistics, The University of Melbourne, Parkville 3010, Australia.
\mbox{E-mail:~\href{matthew.tam@unimelb.edu.au}{matthew.tam@unimelb.edu.au}}}
}

\date{March 16, 2021}

\maketitle

\begin{abstract}
We introduce a reformulation technique that converts a many-set feasibility problem into an equivalent two-set problem. This technique involves reformulating the original feasibility problem by replacing a pair of its constraint sets with their intersection, before applying Pierra's classical product space reformulation. The step of combining the two constraint sets reduces the dimension of the product spaces. We refer to this as the \emph{constraint reduction reformulation} and use it to obtain constraint-reduced variants of well-known projection algorithms such as the Douglas--Rachford algorithm and the method of alternating projections, among others. We prove global convergence of constraint-reduced algorithms in the presence of convexity and local convergence in a nonconvex setting. In order to analyse convergence of the constraint-reduced Douglas--Rachford method, we generalize a classical result which guarantees that the composition of two projectors onto subspaces is a projector onto their intersection. Finally, we apply the constraint-reduced versions of Douglas--Rachford and alternating projections to solve the wavelet feasibility problems, and then compare their performance with their usual product variants.
\end{abstract}

\paragraph{Keywords:}
alternating projections $\cdot$ 
cyclic projections $\cdot$
Douglas--Rachford $\cdot$
fixed point iterations $\cdot$
wavelets

\paragraph{Mathematics Subject Classification (MSC 2020):}
90C26 $\cdot$ %Nonconvex programming, global optimization
47H10 $\cdot$ %Operator theory: Fixed-point theorems
65K10 $\cdot$ %Numerical optimization and variational techniques
65T60 %Numerical methods for wavelets

\section{Introduction}

A \emph{feasibility problem} is the task of finding a point in the intersection of a finite family of sets. Formally, given sets $K_1, K_2, \dots, K_r$ contained in a Hilbert space, the corresponding feasibility problem is to
\begin{equation}\label{def:feasibility}
	\text{find~} x^\ast \in K :=\bigcap_{j=1}^r K_j. 
\end{equation}
In the literature, \emph{projection algorithms} are often used to solve feasibility problems. The \emph{method of alternating projections} (MAP) \cite{neumann} and the \emph{Douglas-Rachford} (DR) algorithm \cite{drachford} are well-known examples of projection algorithms which are applicable to two-set feasibility problems. Of these, the DR method has experienced sustained popularity because of its empirical potency in nonconvex settings \cite{abtam2,abtam1,aacampoy,bsims,btam1,dtam}. Although originally formulated for two-set feasibility problems, it has been extended to many-set feasibility problems by employing the \emph{cyclic DR method} \cite{btam,btam2}, \emph{cyclically anchored DR} method \cite{bnphan}, \emph{cyclic generalized DR method} \cite{dphan1,dphan2}, or through Pierra's \emph{product space reformulation} \cite{pierra}. The latter reformulation has the potential drawback of computational inefficiency when the number of constraint sets becomes large. This arises because each additional constraint in the original problem results in an additional product-dimension in the reformulation. A scheme to circumvent this is to replace a pair of constraints by their intersection. We formalize this as a new reformulation technique in Section~\ref{sec:mainresults} and use it to introduce variants of well-known projection algorithms. It is further favorable, but not required, if the pair of constraints $K_i$ and $K_j$ satisfy
\begin{equation}\label{def:generalcondition}
	P_{K_i}(K_j) \subseteq K_j
\end{equation}
for some $i,j \in \{1,2,\dots, r\}$ with $i\neq j$, where $P_C$ denotes the $projector$ onto a set $C$. As we will show in Section~\ref{sec:wavelets}, exactly this property appears in the constraint sets arising in the feasibility approach to \emph{wavelet construction} \cite{franklin,fhtam,fhtamfull}.

More precisely, the construction of compactly supported and smooth multidimensional \emph{wavelets} with orthogonal shifts and \emph{multiresolution} structure has been recently formulated as a many-set feasibility problem \cite{franklin,fhtam,fhtamfull} where the DR method, together with other {projection algorithms} and their many-set extensions, has been successfully employed. In this approach, properties of wavelets which are desirable in signal processing (e.g., compact support, smoothness) are treated as constraints alongside the conditions of \emph{multiresolution analysis} (MRA) \cite{mallat,meyer}, and intersection points yield the coefficients of the corresponding scaling and wavelet functions. 

As additional properties such as \emph{real-valuedness}, \emph{symmetry} and \emph{cardinality} \cite{dhlakey} are added to the wavelet construction problem, the computational inefficiencies of the product space reformulation outlined above are realized due to the additional constraint sets. Fortunately, but also rather peculiarly to the structure of the wavelet feasibility problem, its constraint sets satisfy the property stated in \eqref{def:generalcondition}. In particular, we show that the real-valuedness or the symmetry constraint may be combined with constraint sets arising from the conditions of MRA.

The goal of this paper is to present a \emph{constraint reduction reformulation} for projection algorithms aimed at solving the feasibility problem \eqref{def:feasibility}. The main results appear in Section~\ref{sec:mainresults} where we formally introduce the reformulation, and use the framework of \emph{fixed point theory} to study the operators obtained as a result of applying the reformulation to well-known projection algorithms. We give a global convergence analysis for the resulting variant of MAP and DR in the convex setting, and a local convergence analysis in a nonconvex setting. To do so, we extend a classical result regarding commutativity of two projectors on closed subspaces. As we show in Section~\ref{sec:wavelets}, the reformulation can significantly reduce computational time.

The rest of the paper is organized as follows. Section~\ref{sec:preliminaries} recalls relevant preliminaries and auxiliary results. Section~\ref{sec:mainresults} contains the constraint reduction reformulation together with other new results including the generalization of the classical result on the commutativity of two projectors. And finally in Section~\ref{sec:wavelets}, we apply the reformulation to wavelet construction cast as a feasibility problem.

\section{Preliminaries}\label{sec:preliminaries}

Henceforth, we use $\mathcal{H}$ to denote a real Hilbert space endowed with an inner product $\langle \cdot, \cdot \rangle$ and induced norm $\|\cdot\|$. For $x \in \mathcal{H}$ and $\delta \geq 0$, the closed ball centered at $x$ with radius $\delta$ is $\mathbb{B}(x;\delta) := \{z\in \mathcal{H}: \|z-x\| \leq \delta\}$. We use $\I$ to denote the identity mapping on $\mathcal{H}$ which maps any point to itself. Moreover, if $T$ is an operator acting on a set $K$, we write $T(K) = \{T(x): x \in K\}$.  We also denote the \emph{set of fixed points} of the operator $T$ by $\Fix T := \{x \in \mathcal{H}: x \in T(x)\}$, which reduces to $\{x \in \mathcal{H}: x = T(x)\}$ when $T$ is single-valued. Further, the product space $\mathcal{H}^r = \mathcal{H} \times \mathcal{H} \times \dots \times \mathcal{H}$ is also a real Hilbert space endowed with the inner product given by
\begin{equation}\label{def:innerproduct}
	\langle \mathbf{x},\mathbf{y} \rangle = \sum_{j=1}^r \langle x_j,y_j \rangle 
\end{equation}
for all $\mathbf{x} = (x_1,x_2,\dots,x_r)$ and $\mathbf{y} = (y_1,y_2,\dots,y_r)$ in $\mathcal{H}^r$.

\subsection{Projectors, Reflectors and Projection Methods}

\begin{definition}
	Let $C$ be a nonempty subset of $\mathcal{H}$. The \emph{distance function} to $C$ is the function $d_C\colon \mathcal{H}\to \mathbb{R}$ defined by
	\begin{equation*}
		d_C(x) = \inf_{z \in C} \|x-z\|
	\end{equation*}
	and the \emph{projector} onto $C$ is the set-valued operator $P_{C}\colon \mathcal{H} \rightrightarrows C$ defined by
	\begin{equation*}
		P_{C}(x) = \{ c \in C: \|x-c\| = d_C(x) \}.
	\end{equation*}
	The \emph{reflector}  with respect to $C$ is the set-valued operator $R_{C}\colon \mathcal{H} \rightrightarrows \mathcal{H}$ defined by
	\begin{equation*}
		R_{C} := 2P_{C} - \I.
	\end{equation*}
	An element of $P_{C}(x)$ is called a \emph{best approximation} of $x$ from $C$ or a \emph{projection} of $x$ onto $C$. Similarly, an element of $R_C(x)$ is called a \emph{reflection} of $x$ with respect to $C$. If every point in $\mathcal{H}$ has at least one projection onto $C$, then $C$ is said to be \emph{proximinal}.
\end{definition}

Note that the sum in the definition of $R_C$ is understood in the sense of Minkowski set addition. In the case where $P_C$ is single-valued for all $x \in \mathcal{H}$, i.e., $P_C(x)=\{u\}$ for some $u \in C$, we abuse notation by writing $P_C(x)=u$ and understand $P_C$ as a {single-valued operator}. It is a direct consequence of the definition that if the projector onto $C$ is single-valued, then the reflector with respect to $C$ is also single-valued. If the set $C$ is closed and convex, then $P_C$ is single-valued \cite[Theorem~3.5]{deutsch}, and the projections onto $C$ are easily characterized as follows.

\begin{proposition}\label{p:proj}
	Let $C$ be a nonempty closed convex subset of $\mathcal{H}$, and $D$ be a nonempty closed affine subspace of $\mathcal{H}$. Then the following statements hold.
	\begin{enumerate}[label =(\alph*)]
		\item\label{p:proj_convex} 
		$P_C(x) = p$ if and only if $p\in C$ and $\langle x-p,c-p \rangle \leq 0 \text{ for all } c \in C$, 
		\item\label{p:proj_affine} 
		$P_D(x) = p$ if and only if $p\in D$ and $\langle x-p,d-p \rangle = 0 \text{ for all } d \in D$.
	\end{enumerate}
\end{proposition}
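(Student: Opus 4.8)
The plan is to establish part~\ref{p:proj_convex} directly from the definition of the projector by a standard first-order perturbation argument, and then obtain part~\ref{p:proj_affine} as a corollary of part~\ref{p:proj_convex} by exploiting the extra structure of affine subspaces. Throughout I would use that, since $C$ is nonempty, closed, and convex, $P_C$ is single-valued (as recalled above via \cite{deutsch}), so that the statement ``$P_C(x)=p$'' is unambiguous.

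For part~\ref{p:proj_convex}, the forward implication: assume $p=P_C(x)$, fix $c\in C$, and note that convexity gives $p+t(c-p)\in C$ for all $t\in[0,1]$. Minimality of $\|x-p\|$ then yields
\begin{equation*}
	\|x-p\|^2 \le \|x-p-t(c-p)\|^2 = \|x-p\|^2 - 2t\langle x-p,c-p\rangle + t^2\|c-p\|^2 ,
\end{equation*}
so that $2\langle x-p,c-p\rangle \le t\|c-p\|^2$ for $t>0$; letting $t\downarrow 0$ gives the claimed inequality. For the reverse implication, assume $p\in C$ satisfies $\langle x-p,c-p\rangle\le 0$ for all $c\in C$; expanding $\|x-c\|^2 = \|x-p\|^2 - 2\langle x-p,c-p\rangle + \|c-p\|^2 \ge \|x-p\|^2$ shows $p$ realizes the distance $d_C(x)$, hence $p\in P_C(x)$, and single-valuedness forces $p=P_C(x)$.

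For part~\ref{p:proj_affine}, I would first observe that an affine subspace is closed and convex, so part~\ref{p:proj_convex} applies verbatim and it only remains to promote the inequality to an equality when the constraint set is affine. The key point is that if $p,d\in D$ then $2p-d\in D$ (affine subspaces are closed under the reflection $d\mapsto 2p-d$ about any of their points); applying the inequality of part~\ref{p:proj_convex} at the point $2p-d$ gives $\langle x-p,(2p-d)-p\rangle = -\langle x-p,d-p\rangle \le 0$, i.e. $\langle x-p,d-p\rangle \ge 0$, which together with the original inequality yields the equality. The converse is immediate since the equality implies the inequality, whence part~\ref{p:proj_convex} returns $P_D(x)=p$. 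There is no genuine obstacle here; the only steps needing a word of care are the appeal to single-valuedness of $P_C$ in the ``if'' directions, and the observation that $2p-d\in D$, which is precisely what converts the variational inequality into the orthogonality identity.
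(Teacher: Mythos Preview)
Your proof is correct and complete. The paper itself does not give a self-contained argument for this proposition; it simply refers the reader to \cite[Theorem~4.1]{deutsch} (or \cite[Theorem~3.16]{bcombettes}) for part~\ref{p:proj_convex} and to \cite[Corollary~3.22]{bcombettes} for part~\ref{p:proj_affine}. Your argument is exactly the standard one found in those references: the first-order perturbation along the segment $p+t(c-p)$ for the forward implication, the square expansion for the converse, and the reflection $d\mapsto 2p-d$ to upgrade the obtuse-angle inequality to orthogonality in the affine case. So there is no genuine difference in approach to discuss---you have simply written out what the paper chose to cite.
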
		
\begin{proof}
	For \ref{p:proj_convex}, see \cite[Theorem~4.1]{deutsch} or \cite[Theorem~3.16]{bcombettes}. For \ref{p:proj_affine}, see \cite[Corollary~3.22]{bcombettes}.
\end{proof}

Projectors and reflectors form part of iterative algorithms called \emph{projection algorithms} for solving feasibility problems. These algorithms exploit the structure of the individual sets which comprise the intersection that is the feasible region. These techniques iterate successively on the individual sets by applying projectors or reflectors, usually in a cyclic fashion.

The earliest formulation of projection methods dates back to the work of von Neumann \cite{neumann} who showed that the sequence $(x_n)_{n\in \mathbb{N}}$ with $x_0 \in \mathcal{H}$ and $x_{n+1} = S_{A,B}(x_n)$, where
\begin{equation*}
	S_{A,B} := P_AP_B,
\end{equation*}
satisfies $\lim_{n \to \infty} x_n = P_{A\cap B}(x_0)$ whenever $A$ and $B$ are closed subspaces. The operator $S_{A,B}$ is sometimes called the \emph{alternating projection operator}, and iterating $S_{A,B}$ to obtain a projection onto the intersection is referred to as the \emph{method of alternating projections}. The result was motivated by von Neumann's return to the question of finding a point on $A\cap B$ when $P_{A}$ and $P_{B}$ do not commute. Before then, it was only known that if $P_{A}$ and $P_{B}$ commute, then $P_{A}P_{B} = P_{A\cap B}$ \cite[Chapter~XIII]{neumann}. The following proposition provides several characterization of this fact.

\begin{theorem}\label{t:commuting}
	If $A$ and $B$ are closed subspaces of $\mathcal{H}$, then the following are equivalent:
	\begin{enumerate}[label =(\alph*)]
		\item $P_{A}P_{B} = P_{B}P_{A}$,
		\item $P_{A}(B) \subseteq B$,
		\item $P_{B}(A) \subseteq A$,
		\item $P_{A}P_{B} = P_{A\cap B}$.
	\end{enumerate}
\end{theorem}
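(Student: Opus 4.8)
The plan is to prove the theorem by establishing the cycle of implications $(a) \Rightarrow (b) \Rightarrow (c) \Rightarrow (d) \Rightarrow (a)$, exploiting the linearity of projectors onto closed subspaces together with the fact that for a closed subspace $A$ one has $P_A$ self-adjoint, $P_A^2 = P_A$, $\ran P_A = A$, and $\I - P_A = P_{A^\perp}$. A useful preliminary observation, used repeatedly below, is that $P_A(x) = x$ exactly when $x \in A$, and more generally that $P_A$ restricted to any subspace $C$ with $P_A(C)\subseteq C$ behaves like a linear idempotent on $C$.

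For $(a) \Rightarrow (b)$: given $b \in B$, write $P_A(b) = P_A P_B(b) = P_B P_A(b) \in \ran P_B = B$, so $P_A(B) \subseteq B$. The implication $(b) \Rightarrow (c)$ is the main point where one must do a little work: I would argue by duality. Assuming $P_A(B) \subseteq B$, take $a \in A$ and decompose $a = P_B(a) + (a - P_B(a))$ with $a - P_B(a) \in B^\perp$; the goal is to show $P_B(a) \in A$, equivalently $a - P_B(a) \in A$, equivalently $a - P_B(a) \in A \cap B^\perp$. For this it suffices to check $\langle a - P_B(a), a'\rangle = 0$ is not quite the right route; instead I would show that $(b)$ is equivalent to $P_{B^\perp}(A) \subseteq A$ (since $\I - P_B = P_{B^\perp}$ and $A$ is a subspace, $P_B(A)\subseteq A \iff P_{B^\perp}(A)\subseteq A$ requires care), and then observe that $(b)$ applied with roles symmetric after passing to orthogonal complements gives $(c)$. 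Concretely: $P_A(B)\subseteq B$ means $P_A P_B = P_B P_A P_B$; taking adjoints (all operators self-adjoint) gives $P_B P_A = P_B P_A P_B$, i.e. $P_B P_A = (P_B P_A)P_B$. From $P_A(B)\subseteq B$ one also gets, for $x\in\Hilbert$, $P_B P_A P_B x = P_A P_B x$, and combining these yields $P_B P_A = P_A P_B$ already — so in fact $(b)$ directly implies $(a)$, and by the symmetry of $(a)$ in $A$ and $B$ we also get $(c)$. This adjoint/idempotent manipulation is the crux and the step I expect to be the main obstacle to state cleanly.

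For $(a) \Rightarrow (d)$ (granting $(a)$, hence also $(b)$ and $(c)$): set $Q := P_A P_B$. Using $(a)$, $Q^2 = P_A P_B P_A P_B = P_A P_A P_B P_B = P_A P_B = Q$, and $Q^* = P_B^* P_A^* = P_B P_A = P_A P_B = Q$, so $Q$ is an orthogonal projector onto its range, namely onto $\ran Q$. It remains to identify $\ran Q = A \cap B$. Clearly $\ran Q \subseteq \ran P_A = A$, and using $(c)$ in the form $P_B(A)\subseteq A$ one checks $\ran Q \subseteq B$ as well (since for $x\in\Hilbert$, $P_A P_B x = P_B P_A x \in P_B(\Hilbert) = B$), giving $\ran Q \subseteq A\cap B$. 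Conversely if $y \in A\cap B$ then $Q y = P_A P_B y = P_A y = y$, so $A \cap B \subseteq \ran Q$. Hence $Q = P_{A\cap B}$, which is $(d)$.

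Finally $(d) \Rightarrow (a)$: if $P_A P_B = P_{A\cap B}$, then since $P_{A\cap B}$ is self-adjoint, $P_A P_B = (P_A P_B)^* = P_B P_A$, giving $(a)$. This closes the cycle and completes the proof. The one genuine subtlety, worth isolating as the key lemma, is the passage from the containment hypothesis $(b)$ to the operator identity $P_A P_B = P_B P_A$; everything else is bookkeeping with idempotent self-adjoint operators and their ranges, and I would present it in exactly that order.
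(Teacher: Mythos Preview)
Your argument is correct. The paper itself does not supply a proof of this statement; it simply cites \cite[Lemma~9.2]{deutsch}, so there is no in-paper approach to compare against. Your route --- showing $(b)\Rightarrow(a)$ via the adjoint trick $P_AP_B = P_BP_AP_B$ and then $(P_AP_B)^* = P_BP_A = P_BP_AP_B = P_AP_B$, and deducing $(d)$ by checking that $Q:=P_AP_B$ is a self-adjoint idempotent with range $A\cap B$ --- is the standard operator-theoretic proof and is exactly what one finds in Deutsch. The one cosmetic issue is that your write-up for $(b)\Rightarrow(c)$ meanders through a false start before arriving at the adjoint argument; in a final version you should excise that detour and state the implication $(b)\Rightarrow(a)$ (hence $(c)$ by symmetry) cleanly in two lines.
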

\begin{proof}
	See \cite[Lemma~9.2]{deutsch}.
\end{proof}

In the next section, we generalize Theorem~\ref{t:commuting} to the case where $A$ is a closed affine subspace and $B$ is a proximinal subset. This generalization is key to our analysis of iterative algorithms.

A natural extension of MAP for many-set feasibility problems is the \emph{method of cyclic projections} which iterates by consecutively applying the projectors onto each of the constraint sets. This has a guaranteed convergence when the sets of interest are subspaces \cite{halperin}. Moreover, the method weakly converges to a point on the intersection when the constraint sets are closed and convex \cite{bregman}. For the case of two closed convex sets, if one of the set is compact or if either of the set is finite dimensional with the distance between them being attained, then strong convergence of MAP may be achieved \cite{cheney}.  While there are other projection methods, we confine ourselves mainly to alternating projections, and the DR algorithm which we now introduce.

\begin{definition}
	Given two nonempty subsets $A$ and $B$ of $\mathcal{H}$, the \emph{DR operator} $T_{A,B}$ is defined as
	\begin{equation*}
		T_{A,B} := \frac{\I+R_BR_A}{2}.
	\end{equation*}
\end{definition}
It is worth noting (see \cite[Equations~(20)--(23)]{bdao}) that, if $P_A$ is single-valued, then 
\begin{equation*}
	T_{A,B} = \I - P_A + P_BR_A \text{~~and~~} P_A(\Fix T_{A,B}) = A\cap B.
\end{equation*}
If $A$ and $B$ are closed convex subsets of $\mathcal{H}$ with $A\cap B \neq \varnothing$, then, for any $x_0 \in \mathcal{H}$, the sequence $(x_n)_{n\in \mathbb{N}}$ generated by $x_{n+1} = T_{A,B}(x_n)$ converges weakly to a point $x^\ast \in \Fix T_{A,B}$, and the shadow sequence $(P_A(x_n))_{n\in \mathbb{N}}$ converges weakly to $P_A(x^\ast) \in A\cap B$ \cite{lions,svaiter}.

\subsection{Convergence of Fixed Point Iterations}

Most of the projection algorithms that we have already mentioned can be cast as \emph{fixed point iterations}. That is, for some starting point, a sequence is generated by repeated applications of the operator at hand, ideally to attain a \emph{fixed point} in the limit. In this subsection, we will recall the relevant notions as well as the propositions necessary to establish convergence of projection algorithms.

\begin{definition}\label{def:mappings}
	Let $C\subseteq \mathcal{H}$ and let $T\colon C\to \mathcal{H}$. The mapping $T$ is said to be 
	\begin{enumerate}[label =(\alph*)]
		\item 
		\emph{nonexpansive} if, for all $x,y \in C$,
		\begin{equation*}
			\| T(x)-T(y)\| \leq \|x-y\|;
		\end{equation*}
		\item 
		\emph{firmly nonexpansive} if, for all $x,y \in C$,
		\begin{equation*}
			\| T(x)-T(y)\|^2 + \| (\I-T)(x)-(\I-T)(y)\|^2 \leq \|x-y\|^2;
		\end{equation*} 
		\item 
		\emph{$\alpha$-averaged} if $\alpha \in (0,1)$ and there exists a nonexpansive operator $R\colon C\to \mathcal{H}$ such that 
		\begin{equation*}
			T = (1-\alpha)\I + \alpha R,
		\end{equation*}
		or equivalently, if $\alpha \in (0,1)$ and, for all $x,y \in C$,
		\begin{equation*}
			\| T(x)-T(y)\|^2 + \frac{1-\alpha}{\alpha}\| (\I-T)(x)-(\I-T)(y)\|^2 \leq \|x-y\|^2.
		\end{equation*}
	\end{enumerate}
\end{definition}
It follows from these definitions that $T$ is firmly nonexpansive if and only if it is $1/2$-averaged. Moreover, if $T$ is $\alpha$-averaged, then it is nonexpansive and also $\beta$-averaged with $\beta \in (\alpha, 1)$.

\begin{proposition}\label{ex:projisalpha} 
	Let $C$ be a nonempty closed convex subset of $\mathcal{H}$. Then 
	\begin{enumerate}[label =(\alph*)]
		\item $P_C$ is firmly nonexpansive.
		\item $R_C$ is nonexpansive.
	\end{enumerate}
\end{proposition}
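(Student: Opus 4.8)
The plan is to deduce both parts from the variational characterization of the projector in Proposition~\ref{p:proj}\ref{p:proj_convex}. Since $C$ is nonempty, closed, and convex, $P_C$ is single-valued (as already noted in the text), so throughout we may treat $P_C$ and $R_C$ as single-valued operators and the norms appearing in Definition~\ref{def:mappings} are unambiguous.

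For part~(a), I would fix $x,y\in\mathcal{H}$ and set $p:=P_C(x)$, $q:=P_C(y)$. Applying Proposition~\ref{p:proj}\ref{p:proj_convex} to $x$ with $c=q$ gives $\langle x-p,\,q-p\rangle\le 0$, and applying it to $y$ with $c=p$ gives $\langle y-q,\,p-q\rangle\le 0$. Adding these two inequalities and rearranging yields the key estimate
\begin{equation*}
	\langle x-y,\,p-q\rangle \ge \|p-q\|^2 .
\end{equation*}
Writing $u:=x-y$ and $v:=p-q$, so that $(\I-P_C)(x)-(\I-P_C)(y)=u-v$, this reads $\langle u,v\rangle\ge\|v\|^2$, and then
\begin{equation*}
	\|P_C(x)-P_C(y)\|^2 + \|(\I-P_C)(x)-(\I-P_C)(y)\|^2
	= \|v\|^2+\|u-v\|^2
	= \|u\|^2 + 2\bigl(\|v\|^2-\langle u,v\rangle\bigr)
	\le \|u\|^2 = \|x-y\|^2 ,
\end{equation*}
which is exactly firm nonexpansiveness of $P_C$.

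For part~(b), the quickest route is to invoke the equivalence recorded after Definition~\ref{def:mappings}: a firmly nonexpansive operator is precisely a $1/2$-averaged operator, so part~(a) yields a nonexpansive $R$ with $P_C=\tfrac12\I+\tfrac12 R$, which forces $R=2P_C-\I=R_C$; hence $R_C$ is nonexpansive. Alternatively, one can argue directly: with $u,v$ as above, $R_C(x)-R_C(y)=2v-u$, so $\|R_C(x)-R_C(y)\|^2=\|u\|^2+4\bigl(\|v\|^2-\langle u,v\rangle\bigr)\le\|u\|^2$ again by the inequality $\langle u,v\rangle\ge\|v\|^2$ from part~(a). I do not anticipate a genuine obstacle here; the only points needing a little care are the single-valuedness of $P_C$ (so the displayed identities make sense) and the sign bookkeeping when the two variational inequalities are added. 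For a more self-contained treatment one could derive $\langle x-y,\,P_C(x)-P_C(y)\rangle\ge\|P_C(x)-P_C(y)\|^2$ directly from minimality of the squared distance, but appealing to the already-stated Proposition~\ref{p:proj}\ref{p:proj_convex} is cleaner.
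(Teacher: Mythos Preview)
Your proof is correct. The only difference from the paper is that the paper does not argue at all: it simply cites \cite[Proposition~4.16 and Corollary~4.18]{bcombettes} and moves on. Your approach is genuinely self-contained, deriving firm nonexpansiveness directly from the variational inequality in Proposition~\ref{p:proj}\ref{p:proj_convex} and then obtaining nonexpansiveness of $R_C$ either from the $1/2$-averaged characterization or by the parallel computation with $2v-u$. This buys independence from the external reference at essentially no cost, and since Proposition~\ref{p:proj}\ref{p:proj_convex} is already stated in the paper, your argument fits cleanly into the existing framework.
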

\begin{proof}
	This follows from \cite[Proposition~4.16 and Corollary~4.18]{bcombettes}.
\end{proof} 

It is also easy to establish that the composition of two nonexpansive operators is again nonexpansive. Also, other averaged maps may be obtained from convex combinations and compositions of already known averaged maps.

\begin{proposition}\label{p:averagedoperators}
	Let $C \subseteq \mathcal{H}$ and let $T_j : C \to \mathcal{H}$ be $\alpha_j$-averaged for each $j \in J :=\{1,2,\dots,r\}$. Then the following statements hold.
	\begin{enumerate}[label =(\alph*)]
		\item\label{p:averagedoperators_combination} 
		$\sum_{j\in J}\lambda_j T_j$ is $\alpha$-averaged with $\displaystyle\alpha = \sum_{j\in J}\lambda_j\alpha_j$, whenever $\lambda_j>0$ and $\displaystyle\sum_{j \in J}\lambda_j = 1$;
		\item\label{p:averagedoperators_composition} 
		$T_rT_{r-1}\cdots T_1$ is $\alpha$-averaged with
		\begin{equation*}
			\alpha = 
			\left( 1 + \left(\sum_{j\in J} \frac{\alpha_j}{1-\alpha_j}\right)^{-1} \right)^{-1}.
		\end{equation*}
	\end{enumerate}
\end{proposition}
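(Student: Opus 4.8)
The plan is to prove both parts using the characterisation of $\alpha$-averagedness given in the second form of Definition~\ref{def:mappings}(c): an operator $T\colon C \to \mathcal{H}$ is $\alpha$-averaged if and only if $\alpha \in (0,1)$ and
\begin{equation*}
	\|T(x) - T(y)\|^2 + \tfrac{1-\alpha}{\alpha}\|(\I - T)(x) - (\I - T)(y)\|^2 \leq \|x - y\|^2
	\quad \text{for all } x, y \in C.
\end{equation*}

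For part \ref{p:averagedoperators_combination}, I would fix $x, y \in C$, write $S := \sum_{j\in J}\lambda_j T_j$, and note that by convexity of $\lambda_j$ and linearity $(\I - S)(x) - (\I - S)(y) = \sum_{j\in J}\lambda_j\big((\I - T_j)(x) - (\I - T_j)(y)\big)$. The first step is to bound $\|(\I - S)(x) - (\I - S)(y)\|^2$ from above; the natural tool is a Jensen-type (convexity of $\|\cdot\|^2$) inequality, $\|\sum_j \lambda_j u_j\|^2 \leq \sum_j \lambda_j \|u_j\|^2$. Likewise bound $\|S(x) - S(y)\|^2 \leq \sum_j \lambda_j\|T_j(x) - T_j(y)\|^2$. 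A subtlety is that the coefficient $\tfrac{1-\alpha}{\alpha}$ with $\alpha = \sum_j \lambda_j \alpha_j$ is not a convex combination of the $\tfrac{1-\alpha_j}{\alpha_j}$, so one cannot apply Jensen directly to the residual term with the right weights. The cleaner route is to use the \emph{first} (equivalent) definition instead: each $T_j = (1-\alpha_j)\I + \alpha_j R_j$ with $R_j$ nonexpansive, so $S = (1-\alpha)\I + \alpha R$ where $R := \tfrac{1}{\alpha}\sum_j \lambda_j \alpha_j R_j$ is a convex combination of nonexpansive operators (the coefficients $\lambda_j\alpha_j/\alpha$ sum to $1$), hence nonexpansive by the triangle inequality; this immediately gives the claim, and $\alpha \in (0,1)$ since it is a convex combination of the $\alpha_j \in (0,1)$.

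For part \ref{p:averagedoperators_composition}, I would argue by induction on $r$, so it suffices to treat the composition $T_2 T_1$ of an $\alpha_1$-averaged and an $\alpha_2$-averaged operator and check that the resulting averagedness constant matches the stated formula when specialised to two terms, then verify the recursion $\big(1 + (\sum_{j=1}^{k}\tfrac{\alpha_j}{1-\alpha_j})^{-1}\big)^{-1}$ composes correctly. Fix $x, y \in C$ and set $x' := T_1(x)$, $y' := T_1(y)$. Applying the quadratic inequality to $T_1$ at $(x,y)$ and to $T_2$ at $(x', y')$, then adding, yields
\begin{equation*}
	\|T_2T_1(x) - T_2T_1(y)\|^2 + \tfrac{1-\alpha_2}{\alpha_2}\|(\I-T_2)(x') - (\I-T_2)(y')\|^2 + \tfrac{1-\alpha_1}{\alpha_1}\|(\I-T_1)(x) - (\I-T_1)(y)\|^2 \leq \|x - y\|^2.
\end{equation*}
The main obstacle is the bookkeeping that turns the two separate residual terms into a single lower bound for $\|(\I - T_2T_1)(x) - (\I - T_2T_1)(y)\|^2$ with the coefficient $\tfrac{1-\alpha}{\alpha} = (\sum_j \tfrac{\alpha_j}{1-\alpha_j})^{-1} = \big(\tfrac{1-\alpha_1}{\alpha_1}\cdot\tfrac{1-\alpha_2}{\alpha_2}\big)/\big(\tfrac{1-\alpha_1}{\alpha_1} + \tfrac{1-\alpha_2}{\alpha_2}\big)$ for $r = 2$. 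Writing $a := (\I-T_1)(x) - (\I-T_1)(y)$ and $b := (\I-T_2)(x') - (\I-T_2)(y')$, one has $(\I - T_2T_1)(x) - (\I - T_2T_1)(y) = a + b$, so by the parallelogram/Cauchy--Schwarz estimate $\|a+b\|^2 \leq (1+t)\|a\|^2 + (1 + t^{-1})\|b\|^2$ for any $t > 0$; choosing $t$ appropriately (namely $t = \tfrac{1-\alpha_2}{\alpha_2}\big/\tfrac{1-\alpha_1}{\alpha_1}$) makes the weighted residual terms above dominate $\tfrac{1-\alpha}{\alpha}\|a+b\|^2$, which closes the two-operator case. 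This is the only genuinely computational step; the rest is the induction, for which I would invoke part of the argument already in place and the identity $\big(1 + (\sum_{j=1}^{r-1}\tfrac{\alpha_j}{1-\alpha_j})^{-1}\big)^{-1}$ combined with $\alpha_r$ reproduces $\big(1 + (\sum_{j=1}^{r}\tfrac{\alpha_j}{1-\alpha_j})^{-1}\big)^{-1}$ via the same two-term formula. Finally I would remark that this is standard (e.g.\ \cite[Proposition~4.46]{bcombettes}) and the proof could simply cite it, but I would include the short argument for completeness.
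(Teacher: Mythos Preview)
Your approach is sound and in fact more detailed than the paper's, which simply cites \cite[Proposition~4.42 and Proposition~4.46]{bcombettes} without argument---exactly the option you mention at the end. Your treatment of part~\ref{p:averagedoperators_combination} via the decomposition $T_j = (1-\alpha_j)\I + \alpha_j R_j$ is the clean way to proceed; the abandoned quadratic-inequality route indeed runs into the weight mismatch you identify. For part~\ref{p:averagedoperators_composition}, the reduction to two operators followed by induction is correct, including the identity $(\I - T_2T_1)(x) - (\I - T_2T_1)(y) = a + b$ and the observation that if $\alpha'$ is the constant for $T_{r-1}\cdots T_1$ then $\alpha'/(1-\alpha') = \sum_{j=1}^{r-1}\alpha_j/(1-\alpha_j)$, so the two-term formula recurses to the stated $r$-term expression.

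One small computational slip: writing $\beta_j := (1-\alpha_j)/\alpha_j$, you need
\begin{equation*}
\beta_1\|a\|^2 + \beta_2\|b\|^2 \;\geq\; \frac{\beta_1\beta_2}{\beta_1+\beta_2}\,\|a+b\|^2,
\end{equation*}
and matching this against $\|a+b\|^2 \leq (1+t)\|a\|^2 + (1+t^{-1})\|b\|^2$ requires $(1+t) = (\beta_1+\beta_2)/\beta_2$, i.e.\ $t = \beta_1/\beta_2$, the reciprocal of the value you wrote. With that correction the argument is complete.
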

\begin{proof}
	For \ref{p:averagedoperators_combination}, see \cite[Proposition~4.42]{bcombettes}. For \ref{p:averagedoperators_composition}, see \cite[Proposition~4.46]{bcombettes}.
\end{proof}

The next proposition establishes averagedness as well as characterizes the fixed point set of operators that are coordinate-wise averaged.

\begin{proposition}\label{p:productoperators}
	Let $C_j \subseteq \mathcal{H}$ and let $T_j\colon C_j \to \mathcal{H}$ be $\alpha_j$-averaged for each $j \in J :=\{1,2,\dots,r\}$. Define the operator $T\colon C_1 \times C_2 \times \dots \times C_r\to \mathcal{H}^r$ by
	\begin{equation*}
		T(\mathbf{x}) = (T_1(x_1),T_2(x_2),\dots,T_r(x_r))
	\end{equation*}
	for all $\mathbf{x} = (x_1,x_2,\dots,x_r) \in C_1 \times C_2 \times \dots \times C_r$. Then the following statements hold.
	\begin{enumerate}[label =(\alph*)]
		\item\label{p:productoperators_averaged} 
		$T$ is $\alpha$-averaged with $\alpha = \max\limits_{j \in J} \alpha_j$.
		\item\label{p:productoperators_fix} 
		$\Fix T = \Fix T_1 \times \Fix T_2 \times \cdots \times \Fix T_r$.
	\end{enumerate}
\end{proposition}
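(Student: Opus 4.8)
The plan is to verify both parts directly from the characterizations in Definition~\ref{def:mappings}, exploiting the fact that the product-space inner product \eqref{def:innerproduct} makes everything decouple coordinate by coordinate. For part \ref{p:productoperators_averaged}, I would start from the quantitative characterization of $\alpha_j$-averagedness: for each $j$ and all $x_j, y_j \in C_j$,
\begin{equation*}
	\|T_j(x_j) - T_j(y_j)\|^2 + \frac{1-\alpha_j}{\alpha_j}\|(\I - T_j)(x_j) - (\I - T_j)(y_j)\|^2 \leq \|x_j - y_j\|^2.
\end{equation*}
Set $\alpha := \max_{j\in J}\alpha_j$. Since $t\mapsto \frac{1-t}{t}$ is decreasing on $(0,1)$, we have $\frac{1-\alpha}{\alpha} \leq \frac{1-\alpha_j}{\alpha_j}$ for every $j$, so each summand only shrinks when $\alpha_j$ is replaced by $\alpha$ in the coefficient; thus the displayed inequality still holds with $\frac{1-\alpha}{\alpha}$ in place of $\frac{1-\alpha_j}{\alpha_j}$. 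Summing over $j\in J$ and using that, by \eqref{def:innerproduct}, $\|T(\mathbf{x}) - T(\mathbf{y})\|^2 = \sum_j \|T_j(x_j) - T_j(y_j)\|^2$ and likewise $\|(\I-T)(\mathbf{x}) - (\I-T)(\mathbf{y})\|^2 = \sum_j \|(\I-T_j)(x_j) - (\I - T_j)(y_j)\|^2$ and $\|\mathbf{x}-\mathbf{y}\|^2 = \sum_j \|x_j - y_j\|^2$, one obtains precisely the $\alpha$-averaged inequality for $T$. One should also note in passing that $\alpha = \max_j\alpha_j \in (0,1)$ since $J$ is finite and each $\alpha_j \in (0,1)$, so the definition applies.

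For part \ref{p:productoperators_fix}, I would argue by double inclusion, though it is really just an unpacking of definitions. A point $\mathbf{x} = (x_1,\dots,x_r)$ lies in $\Fix T$ iff $T(\mathbf{x}) = \mathbf{x}$, i.e.\ $(T_1(x_1),\dots,T_r(x_r)) = (x_1,\dots,x_r)$, which by equality of tuples in $\mathcal{H}^r$ holds iff $T_j(x_j) = x_j$ for every $j\in J$, i.e.\ iff $x_j \in \Fix T_j$ for every $j$; this is exactly the statement that $\mathbf{x} \in \Fix T_1 \times \cdots \times \Fix T_r$. (Each $\Fix T_j$ is nonempty is not needed here, as the identity of sets holds regardless.)

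I do not anticipate a genuine obstacle in this proof; the only point requiring a moment's care is the monotonicity argument in part \ref{p:productoperators_averaged} — namely that enlarging $\alpha_j$ to the common value $\alpha$ weakens the coefficient $\frac{1-\alpha_j}{\alpha_j}$ in the right direction, so that the per-coordinate inequalities remain valid and can simply be added. Everything else is bookkeeping with the product inner product \eqref{def:innerproduct}. Alternatively, for part \ref{p:productoperators_averaged} one could instead write each $T_j = (1-\alpha_j)\I + \alpha_j R_j$ with $R_j$ nonexpansive and try to exhibit $T$ as $(1-\alpha)\I + \alpha R$ for a single nonexpansive $R$ on the product, but this is slightly more awkward because the averaging parameters differ across coordinates, so I would prefer the inequality-based route above.
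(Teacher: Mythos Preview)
Your proposal is correct and follows essentially the same approach as the paper: for part~\ref{p:productoperators_averaged} the paper also uses the inequality characterization from Definition~\ref{def:mappings}, first observing that each $T_j$ is $\alpha$-averaged for the common $\alpha = \max_j \alpha_j$ (which is exactly your monotonicity remark on $t\mapsto (1-t)/t$), and then summing the coordinate inequalities via \eqref{def:innerproduct}; for part~\ref{p:productoperators_fix} the paper likewise just says it is immediate from the definition.
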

\begin{proof}
	\ref{p:productoperators_averaged}: Let $\mathbf{x},\mathbf{y} \in C_1 \times C_2 \times \dots \times C_r$. Since $\alpha = \max_{j\in J} \alpha_j \in (0,1)$, we also have that $T_j$ is $\alpha$-averaged for each $j \in J$. Using Definition~\ref{def:mappings} and \eqref{def:innerproduct}, we obtain
	\begin{align*}
		\|T(\mathbf{x}) -T(\mathbf{y})\|^2 &+ \dfrac{1-\alpha}{\alpha}\|(\I-T)(\mathbf{x}) - (\I-T)(\mathbf{y})\|^2\\
		&=\sum_{j=1}^r\|T_j(x_j)-T_j(y_j)\|^2 + \dfrac{1-\alpha}{\alpha}\sum_{j=1}^r\|(\I-T_j)(x_j) - (\I-T_j)(y_j)\|^2 \\
		&=\sum_{j=1}^r\left(\|T_j(x_j)-T_j(y_j)\|^2 + \dfrac{1-\alpha}{\alpha}\|(\I-T_j)(x_j) - (\I-T_j)(y_j)\|^2 \right)\\
		&\leq \sum_{j=1}^r\|x_j-y_j\|^2\\
		&=\|\mathbf{x}-\mathbf{y}\|^2.
	\end{align*}
	Thus, $T$ is $\alpha$-averaged.
	
	\ref{p:productoperators_fix}: This is immediate from the definition.
\end{proof}

The following proposition provides a useful criterion for convergence of fixed point iterations. 

\begin{proposition}[Opial's theorem]\label{p:Opial}
	Let $T\colon\mathcal{H} \to \mathcal{H}$ be $\alpha$-averaged with $\Fix T \neq \varnothing$. Then, for any $x_0 \in \mathcal{H}$, the sequence $(x_n)_{n\in \mathbb{N}}$ generated by $x_{n+1} = T(x_n)$ converges weakly to a point $x^\ast \in \Fix T$.
\end{proposition}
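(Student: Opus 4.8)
The plan is to run the classical Krasnosel'skii--Mann/Opial argument, which rests on three ingredients: Fej\'er monotonicity of the iterates, asymptotic regularity, and a demiclosedness argument that pins down the weak limit. First I would fix an arbitrary $z \in \Fix T$ and use that an $\alpha$-averaged operator is in particular nonexpansive, so that $\|x_{n+1}-z\| = \|T(x_n)-T(z)\| \le \|x_n-z\|$; hence $(\|x_n-z\|)_{n\in\mathbb{N}}$ is nonincreasing, so convergent, and in particular $(x_n)_{n\in\mathbb{N}}$ is bounded and admits weak sequential cluster points.

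For asymptotic regularity, I would apply the averagedness inequality of Definition~\ref{def:mappings} with $y = z \in \Fix T$; since $(\I-T)(z) = 0$ and $(\I-T)(x_n) = x_n - x_{n+1}$, this yields
\begin{equation*}
	\frac{1-\alpha}{\alpha}\,\|x_n - x_{n+1}\|^2 \le \|x_n-z\|^2 - \|x_{n+1}-z\|^2 .
\end{equation*}
Summing over $n \in \{0,\dots,N\}$ telescopes the right-hand side to at most $\|x_0-z\|^2$, and letting $N \to \infty$ gives $\sum_{n\in\mathbb{N}}\|x_n-x_{n+1}\|^2 < \infty$, so $x_n - T(x_n) = x_n - x_{n+1} \to 0$.

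Next I would show every weak cluster point is a fixed point. If $x_{n_k} \wto x^\ast$, the identity $\|x_{n_k}-y\|^2 = \|x_{n_k}-x^\ast\|^2 + 2\langle x_{n_k}-x^\ast,\, x^\ast-y\rangle + \|x^\ast-y\|^2$ together with $\langle x_{n_k}-x^\ast,\,\cdot\rangle \to 0$ gives Opial's inequality $\liminf_k\|x_{n_k}-x^\ast\| < \liminf_k\|x_{n_k}-y\|$ for every $y \neq x^\ast$; applying this with $y = T(x^\ast)$ (assuming $T(x^\ast)\neq x^\ast$) and using nonexpansiveness of $T$ together with $x_{n_k}-T(x_{n_k}) \to 0$ produces the contradiction $\liminf_k\|x_{n_k}-x^\ast\| < \liminf_k\|x_{n_k}-T(x^\ast)\| = \liminf_k\|T(x_{n_k})-T(x^\ast)\| \le \liminf_k\|x_{n_k}-x^\ast\|$, so $x^\ast \in \Fix T$. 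For uniqueness of the weak cluster point, if $x^\ast$ and $y^\ast$ are both cluster points then both lie in $\Fix T$, so $\lim_n\|x_n-x^\ast\|$ and $\lim_n\|x_n-y^\ast\|$ exist by the first step; since $\|x_n-x^\ast\|^2 - \|x_n-y^\ast\|^2 = -2\langle x_n,\, x^\ast-y^\ast\rangle + \|x^\ast\|^2 - \|y^\ast\|^2$, the scalar sequence $\langle x_n,\,x^\ast-y^\ast\rangle$ converges, and evaluating its limit along subsequences tending weakly to $x^\ast$ and to $y^\ast$ forces $\|x^\ast - y^\ast\|^2 = 0$. A bounded sequence with a unique weak sequential cluster point converges weakly to it, so $x_n \wto x^\ast \in \Fix T$.

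The step I expect to be the main obstacle is the demiclosedness argument identifying the cluster points: it is the only place where the weak topology genuinely interacts with the norm, and it has to be routed through Opial's inequality rather than any naive limiting argument. Everything else reduces to telescoping estimates and the nonexpansiveness already guaranteed by $\alpha$-averagedness. Alternatively, one may simply cite the demiclosedness principle and the Krasnosel'skii--Mann convergence theorem directly (see, e.g., \cite{bcombettes}).
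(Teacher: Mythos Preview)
Your argument is correct and is precisely the standard Krasnosel'skii--Mann/Opial proof: Fej\'er monotonicity from nonexpansiveness, asymptotic regularity from the telescoping averagedness inequality, demiclosedness via Opial's property, and uniqueness of the weak cluster point from convergence of $\|x_n-z\|$ for every $z\in\Fix T$. Each step is carried out cleanly.

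The paper itself does not give a proof at all: it simply refers the reader to Opial's original article and to \cite[Proposition~5.16]{bcombettes} (specialized to $\lambda_n\equiv 1$). Your write-up is exactly the argument one finds upon unpacking those references, so there is no substantive difference in approach---you have merely made the cited proof explicit rather than deferring to the literature. Your closing remark that one could alternatively cite the demiclosedness principle and the Krasnosel'skii--Mann theorem from \cite{bcombettes} is in fact all the paper does.
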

\begin{proof}
	See \cite{opial}, or set $\lambda_n = 1$ for all $n$ in \cite[Proposition~5.16]{bcombettes}.
\end{proof}

\begin{proposition}\label{p:compositionfixedpoint}
	Let $C$ be a nonempty subset of $\mathcal{H}$ and let $T_j\colon C \to C$ be $\alpha_j$-averaged for each $j \in \{1,2,\dots,r\}$ such that $\bigcap_{j=1}^r \Fix T_j \neq \varnothing$. Then $\Fix (T_r\cdots T_2T_1) = \bigcap_{j=1}^r \Fix T_j$.
\end{proposition}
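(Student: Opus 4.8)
The plan is to prove the two inclusions separately. The inclusion $\bigcap_{j=1}^r \Fix T_j \subseteq \Fix(T_r\cdots T_2T_1)$ is immediate: if $x$ is fixed by every $T_j$, then applying $T_1, T_2, \dots, T_r$ in succession leaves $x$ unchanged, so $x \in \Fix(T_r\cdots T_2T_1)$. Note that the composition is well defined since each $T_j$ maps $C$ into $C$.

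For the reverse inclusion, fix a point $y \in \bigcap_{j=1}^r \Fix T_j$, which is nonempty by hypothesis, and let $x \in \Fix(T_r\cdots T_2T_1)$. Introduce the intermediate iterates $x_0 := x$ and $x_j := T_j(x_{j-1})$ for $j = 1,\dots,r$, so that $x_r = T_r\cdots T_2T_1(x) = x = x_0$. Since each $T_j$ is $\alpha_j$-averaged with $\alpha_j \in (0,1)$ and $y = T_j(y)$, the equivalent characterization of averagedness in Definition~\ref{def:mappings} applied to the pair $x_{j-1}, y$ yields, for each $j$,
\[
\|x_j - y\|^2 + \frac{1-\alpha_j}{\alpha_j}\,\|x_{j-1} - x_j\|^2 \leq \|x_{j-1} - y\|^2 .
\]

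Summing these inequalities over $j = 1,\dots,r$, the terms $\|x_{j-1}-y\|^2$ telescope, and using $x_0 = x_r$ the boundary terms cancel, leaving
\[
\sum_{j=1}^r \frac{1-\alpha_j}{\alpha_j}\,\|x_{j-1}-x_j\|^2 \leq 0 .
\]
Because $\alpha_j \in (0,1)$, every coefficient $\frac{1-\alpha_j}{\alpha_j}$ is strictly positive, which forces $x_{j-1} = x_j$ for every $j \in \{1,\dots,r\}$. Hence $x = x_0 = x_1 = \cdots = x_r$, and since $x_j = T_j(x_{j-1}) = T_j(x)$ this shows $T_j(x) = x$ for all $j$, i.e., $x \in \bigcap_{j=1}^r \Fix T_j$, completing the proof.

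I do not anticipate a serious obstacle; the argument is essentially the classical demonstration-of-equality-in-a-chain-of-inequalities technique. The only points needing a little care are that the telescoping sum must begin and end at the same value (which is exactly what $x \in \Fix(T_r\cdots T_2T_1)$ guarantees), and that the strict positivity of $\frac{1-\alpha_j}{\alpha_j}$ — rather than mere nonexpansiveness — is what upgrades the conclusion $\sum_j (\cdots) \le 0$ to genuine equality $x_{j-1}=x_j$ of consecutive iterates, from which membership in each $\Fix T_j$ follows.
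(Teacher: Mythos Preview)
Your proof is correct. The paper does not actually prove this proposition; it simply cites \cite[Corollary~4.51]{bcombettes}. What you have written is the standard self-contained argument behind that citation: the telescoping use of the averagedness inequality around a common fixed point, with the strict positivity of $(1-\alpha_j)/\alpha_j$ forcing all intermediate iterates to coincide. So your approach is not different in spirit from the literature result being invoked---it \emph{is} that result, spelled out in full---and it has the advantage of making the note of caution at the end (that mere nonexpansiveness would not suffice) explicit.
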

\begin{proof}
	See \cite[Corollary~4.51]{bcombettes}. 
\end{proof}

\subsection{Product Space Reformulation}\label{sec:productspacetrick}

The \emph{product space reformulation} rewrites a many-set feasibility problem into a two-set feasibility problem \cite{pierra}. Given $K_1, K_2, \dots, K_r \subseteq \mathcal{H}$, with corresponding projectors $P_{K_1}, P_{K_2}, \dots, P_{K_r}$, the sets $C$ and $D$ in the product Hilbert space $\mathcal{H}^r$ are defined by
\begin{subequations}
	\begin{align} 
		C &:= K_1 \times K_2 \times \dots \times K_r \text{ and} \label{productC}\\
		D &:= \{(x_1, x_2, \dots, x_r) \in \mathcal{H}^r: x_1 = x_2 = \cdots = x_r\}. \label{productD}
	\end{align}
\end{subequations}
The $r$-set feasibility problem is equivalent to the two-set feasibility problem on $C$ and $D$ in the sense that 
\begin{equation}\label{productspaceequivalence}
	x^{\ast} \in \bigcap_{j=1}^r K_j  \ \iff \mathbf{x}^{\ast}:=(x^{\ast},x^{\ast},\dots,x^{\ast}) \in C \cap D.
\end{equation}
Furthermore, the projectors onto $C$ and $D$ are given by
\begin{subequations}
	\begin{align}
		P_{C}(\mathbf{x}) &=  P_{K_1}(x_1) \times P_{K_2}(x_2) \times \dots \times P_{K_r}(x_r) \text{ and}  \label{proj:productC}\\\
		P_{D}(\mathbf{x}) &= \left(\frac{1}{r}\sum_{j=1}^r x_j, \frac{1}{r}\sum_{j=1}^r x_j, \dots, \frac{1}{r}\sum_{j=1}^r x_j\right)\label{proj:productD}
	\end{align}
\end{subequations}
for any $\mathbf{x}=(x_1,x_2,\dots, x_r) \in \mathcal{H}^r$; see, e.g., \cite[Proposition~29.3 and Proposition~26.4(iii)]{bcombettes}. Note that $D$ is a closed subspace of $\mathcal{H}$, and $C$ is a closed convex set if and only if $K_1,K_2,\dots,K_r$ are closed and convex.

The product space reformulation allows us to use MAP and DR even when the number of constraint sets is greater than two.

\section{Constraint Reduction for Feasibility Problems}\label{sec:mainresults}

The main objective of this section is to introduce a constraint reduction reformulation for the $r$-set feasibility problem defined in \eqref{def:feasibility}. Before describing the new reformulation, we first prove a generalization of Theorem~\ref{t:commuting}. This will be important in defining a particular case where the resulting operator arising from the constraint reduction reformulation for DR method will have a guaranteed convergence.

\subsection{Projectors onto Intersections}

The following theorem extends Theorem~\ref{t:commuting}, which applies for two closed subspaces of $\mathcal{H}$, to the setting of a closed affine subspace and a proximinal subset.

\begin{theorem}\label{t:main}
	Let $A$ be a closed affine subspace and $B$ be a proximinal subset of $\mathcal{H}$. Consider the following statements.
	\begin{enumerate}[label =(\alph*)]
		\item\label{state1} 
		$P_A(B) \subseteq B$, 
		\item\label{state2} 
		$P_A(B) = A\cap B$,
		\item\label{state3} 
		$P_B(A) \subseteq A$, 
		\item\label{state4} 
		$P_B(A) = A\cap B$, 
		\item\label{state5} 
		$P_BP_A = P_{A\cap B}$.  
	\end{enumerate}
	Then \ref{state1} $\implies$ \ref{state2} $\implies$ \ref{state3} $\implies$ \ref{state4} $\implies$ \ref{state5}. Moreover, if $B$ is convex, then all statements are equivalent.
\end{theorem}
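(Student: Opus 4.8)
My plan is to establish the chain of implications \ref{state1} $\implies$ \ref{state2} $\implies$ \ref{state3} $\implies$ \ref{state4} $\implies$ \ref{state5}, and then prove the converse direction \ref{state5} $\implies$ \ref{state1} under the additional hypothesis that $B$ is convex, which closes the loop and gives equivalence of all five statements.

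For \ref{state1} $\implies$ \ref{state2}: Assuming $P_A(B)\subseteq B$, I would show $P_A(B) = A\cap B$ by two inclusions. The inclusion $P_A(B)\subseteq A$ is automatic since $A$ is the range of $P_A$, so combined with the hypothesis we get $P_A(B)\subseteq A\cap B$. For the reverse inclusion, if $x\in A\cap B$ then $x\in A$ forces $P_A(x) = x$ (since $A$ is affine, hence $P_A$ fixes its points), and $x\in B$ shows $x\in P_A(B)$. For \ref{state2} $\implies$ \ref{state3}: the key is the characterization of projections onto affine subspaces in Proposition~\ref{p:proj}\ref{p:proj_affine}, namely $P_A(x)=p$ iff $p\in A$ and $\langle x-p, d-p\rangle = 0$ for all $d\in A$. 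Given $a\in A$, pick any $b\in P_B(a)$; I want to show $b\in A$. Write $p := P_A(b)$; by \ref{state2}, $p\in A\cap B$. Using the Pythagorean-type identity for the affine projection, $\|a-b\|^2 = \|a-p\|^2 + \|p-b\|^2$ (since $a-p\in A-A$ is "orthogonal" to $b-p$ in the affine sense — more precisely $\langle b-p, a-p\rangle = 0$ because $a\in A$). Since $p\in B$ and $b\in P_B(a)$ minimizes distance to $B$, we have $\|a-b\|\le\|a-p\|$, forcing $\|p-b\| = 0$, i.e., $b = p\in A$. For \ref{state3} $\implies$ \ref{state4}: identical bookkeeping to the first implication, using that points of $A\cap B$ are fixed by $P_B$ (they lie in $B$ and have distance zero) and lie in $P_B(A)$, while $P_B(A)\subseteq B$ always and $P_B(A)\subseteq A$ by hypothesis. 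For \ref{state4} $\implies$ \ref{state5}: given $x\in\mathcal{H}$, I would show $P_B(P_A(x)) \subseteq P_{A\cap B}(x)$, and since the latter is (at most) a single point when things behave well — actually here I must be careful as $B$ need not be convex, so I'd argue directly: for $b\in P_B(P_A(x))$, by \ref{state4} $b\in A\cap B$; then estimate $\|x-b\|$ against $\|x-c\|$ for arbitrary $c\in A\cap B$ using that $P_A$ is firmly nonexpansive (Proposition~\ref{ex:projisalpha}) and $c\in A$ is fixed by $P_A$, giving $\|P_A(x)-c\|\le\|x-c\|$, combined with $\|P_A(x)-b\|\le\|P_A(x)-c\|$ since $c\in B$ and $b\in P_B(P_A(x))$; chaining carefully should yield $\|x-b\|\le\|x-c\|$, hence $b\in P_{A\cap B}(x)$.

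For the converse \ref{state5} $\implies$ \ref{state1} with $B$ convex: assuming $P_BP_A = P_{A\cap B}$, I want $P_A(B)\subseteq B$. Take $b\in B$; then $P_A(b)$ is a single point (as $A$ is affine), and applying the hypothesis at $x=b$: $P_B(P_A(b)) = P_{A\cap B}(b)$. Now $b\in B$ so $d_B(P_A(b)) \le \|P_A(b) - b\| = d_A(b)$. On the other hand $P_B(P_A(b))\in A\cap B$, so $\|P_A(b) - P_B(P_A(b))\| = d_B(P_A(b))$ with the projection point in $A$, and I should compare $\|b - P_{A\cap B}(b)\|$ with $d_A(b)$ to squeeze out that $P_A(b)$ already lies in $B$; convexity of $B$ (hence of $A\cap B$, and single-valuedness of all projectors) is what makes this argument clean, letting me use Proposition~\ref{p:proj}\ref{p:proj_convex} to identify projections via the variational inequality. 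I expect the main obstacle to be precisely this last step: getting from the composition identity back to $P_A(B)\subseteq B$ requires a genuinely geometric argument rather than the routine fixed-point-and-inclusion shuffling used in the forward chain, and it is exactly here that convexity of $B$ is indispensable — without it one can have $P_BP_A = P_{A\cap B}$ (both sides behaving well on the relevant points) while $P_A(B)$ escapes $B$. A secondary subtlety throughout the forward chain is handling set-valuedness of $P_B$ carefully, always arguing "for every selection $b\in P_B(\cdot)$" rather than treating $P_B$ as single-valued.
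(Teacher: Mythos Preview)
Your forward chain \ref{state1} $\implies$ \ref{state2} $\implies$ \ref{state3} $\implies$ \ref{state4} matches the paper's argument essentially verbatim, and the Pythagorean identity from Proposition~\ref{p:proj}\ref{p:proj_affine} is exactly the right tool.

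There are, however, two genuine gaps in your plan for \ref{state4} $\implies$ \ref{state5}. First, the ``chaining'' you sketch does not close: from $\|P_A(x)-c\|\le\|x-c\|$ (nonexpansiveness) and $\|P_A(x)-b\|\le\|P_A(x)-c\|$ you only obtain $\|P_A(x)-b\|\le\|x-c\|$, not $\|x-b\|\le\|x-c\|$. What actually works is the same Pythagorean step you used in \ref{state2} $\implies$ \ref{state3}: since $b,c\in A\cap B\subseteq A$, write $a:=P_A(x)$ and use $\|x-b\|^2=\|x-a\|^2+\|a-b\|^2$ and $\|x-c\|^2=\|x-a\|^2+\|a-c\|^2$; then $\|a-b\|\le\|a-c\|$ gives the desired inequality. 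Second, you only argue the inclusion $P_BP_A(x)\subseteq P_{A\cap B}(x)$. Since $B$ is merely proximinal, both operators may be genuinely set-valued, so you must also prove $P_{A\cap B}(x)\subseteq P_BP_A(x)$. The paper does this by observing that the Pythagorean identities above force $\|a-c\|=\|a-b\|=d_B(a)$ once you know $\|x-b\|=\|x-c\|$, whence every $c\in P_{A\cap B}(x)$ lies in $P_B(a)$.

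For \ref{state5} $\implies$ \ref{state1} under convexity of $B$, you correctly identify the variational inequality of Proposition~\ref{p:proj}\ref{p:proj_convex} as the crucial input, but your distance-comparison sketch is too vague to succeed as stated. The paper's argument is short and direct: for $x\in B$, set $y:=P_BP_A(x)=P_{A\cap B}(x)\in A\cap B$; then Proposition~\ref{p:proj}\ref{p:proj_affine} gives $\langle P_A(x)-y,\,P_A(x)-x\rangle=0$ (since $y\in A$), while Proposition~\ref{p:proj}\ref{p:proj_convex} applied to $P_B$ at the point $P_A(x)$ gives $\langle P_A(x)-y,\,x-y\rangle\le 0$ (since $x\in B$). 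Adding these yields $\|P_A(x)-y\|^2\le 0$, so $P_A(x)=y\in B$.
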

\begin{proof} 
	\ref{state1} $\implies$ \ref{state2}: If $P_A(B)\subseteq B$, then $P_A(B) \subseteq A\cap B	= P_A(A\cap B)\subseteq P_A(B)$, which yields $P_A(B) = A\cap B$.	
	
	\ref{state2} $\implies$ \ref{state3}: Assume that $P_A(B) = A\cap B$. Take any $a\in A$, any $b\in P_B(a)\subseteq B$, and set $p = P_A(b)\in P_A(B) = A\cap B$. Since $A$ is a closed affine subspace, Proposition~\ref{p:proj}\ref{p:proj_affine} gives $\langle a-p, b-p \rangle =0$, which implies that 
	\begin{equation*}
		\|a-b\|^2 = \|a-p\|^2 + \|b-p\|^2.
	\end{equation*}
	As $b\in P_B(a)$ and $p\in B$, it holds that $\|a-b\|\leq \|a-p\|$. Combining with the above equality yields $\|b-p\|^2 = 0$, so $b = p\in A$. Since $a$ was chosen arbitrarily, we deduce that $P_B(A) \subseteq A$. 
	
	\ref{state3} $\implies$ \ref{state4}: This follows by interchanging the roles of $A$ and $B$ in the proof of ``\ref{state1} $\implies$ \ref{state2}''.
	
	\ref{state4} $\implies$ \ref{state5}: Assume that $P_B(A) = A\cap B$. Fix $x\in \mathcal{H}$, let any $b\in P_BP_A(x)$ and any $c\in P_{A\cap B}(x)$. Then $b\in P_B(A) = A\cap B$ and also $c\in A\cap B$. Setting $a = P_A(x)$, we have $b\in P_B(a)$. Since $A$ is a closed affine subspace, Proposition~\ref{p:proj}\ref{p:proj_affine} implies that, for all $z\in B$, $\langle x-a, a-z \rangle =0$, which yields 
	\begin{equation*}
		\|x-b\|^2 = \|x-a\|^2 + \|a-b\|^2 
		\text{~~and~~} 
		\|x-c\|^2 = \|x-a\|^2 + \|a-c\|^2.
	\end{equation*}
	In addition, $\|a-b\|\leq \|a-c\|$ since $b\in P_B(a)$ and $c\in B$. Therefore, $\|x-b\|\leq \|x-c\| = d_{A\cap B}(x)$, which together with $b\in A\cap B$ implies that $\|x-b\| =\|x-c\|$ and $b\in P_{A\cap B}(x)$. From this, we also obtain that $\|a-c\| = \|a-b\| = d_B(a)$, and hence that $c\in P_B(a)$, which means $c\in P_BP_A(x)$. Since $x$, $b$ and $c$ were choosen arbitrarily, we deduce that $P_BP_A = P_{A\cap B}$. 
	
	\ref{state5} $\implies$ \ref{state1}: Assume that $B$ is convex and that $P_BP_A = P_{A\cap B}$. Fix $x \in B$ and set $y:=P_BP_A(x) = P_{A\cap B}(x)$. Then $y \in A \cap B$. Since $A$ is a closed affine subspace and $y \in A$,  Proposition~\ref{p:proj}\ref{p:proj_affine} gives
	\begin{equation}
		\langle P_A(x) -y, P_A(x) - x \rangle = 0.\label{eqn:proof121}
	\end{equation} 
	Moreover, since $x \in B$ and $y=P_BP_A(x)$,  Proposition~\ref{p:proj}\ref{p:proj_convex} yields
	\begin{equation}
		\langle P_A(x)-y,x-y\rangle \leq 0.\label{eqn:proof122} 
	\end{equation}
	Adding \eqref{eqn:proof121} and \eqref{eqn:proof122}, we obtain
	\begin{equation*}
		\langle P_A(x)-y, P_A(x)-y \rangle \leq 0,
	\end{equation*}
	which implies that $P_A(x) = y = P_BP_A(x)$. Thus, $P_A(B)  \subseteq P_B(P_A(B)) \subseteq B$. 
\end{proof}

We remark that if $B$ is not convex, then we need not have $P_A(B) \subseteq B$ when $P_BP_A = P_{A\cap B}$. For a counterexample, we refer to Figure~\ref{fig:EdoesnotimplyA}. Here we take $A = \{(x,x) : x \in \mathbb{R}\}$ and $B=\{(x,y) \in \mathbb{R}^2: 1 \leq x^2+y^2 \leq 4\}$. It is easy to check that $A$ is a subspace, $B$ is nonconvex and $P_BP_A = P_{A\cap B}$. However, $x_0 \in B$ and $P_A(x_0) \notin B$.

\begin{figure}[h!]
	\centering
	\subfloat[$P_BP_A=P_{A\cap B}{\ \ \not\!\!\!\implies} P_A(B)\subseteq B$.\label{fig:EdoesnotimplyA}]{\includegraphics[width = 0.45\linewidth]{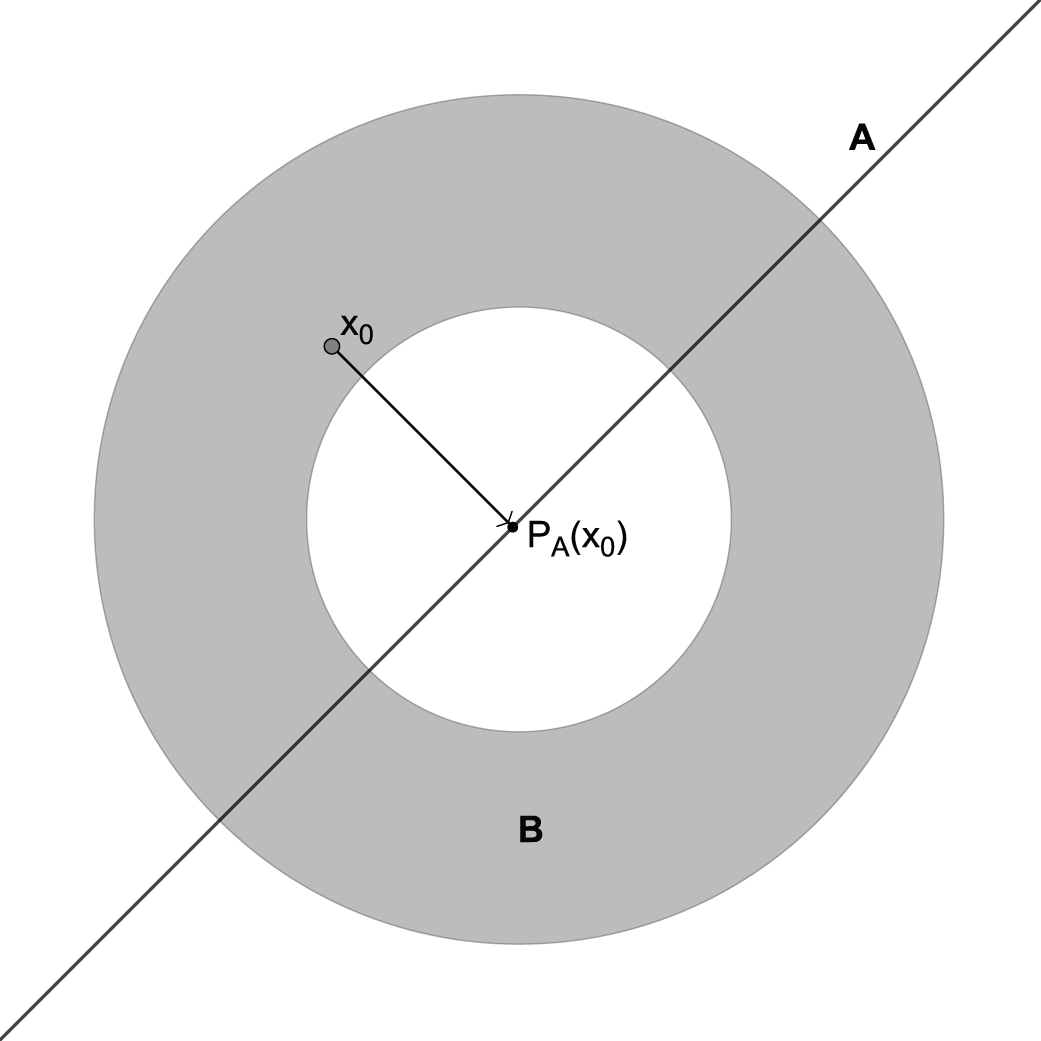}}
	\subfloat[$P_AP_B \neq P_{A\cap B}$.\label{fig:PBPAnot}]{\includegraphics[width = 0.45\linewidth]{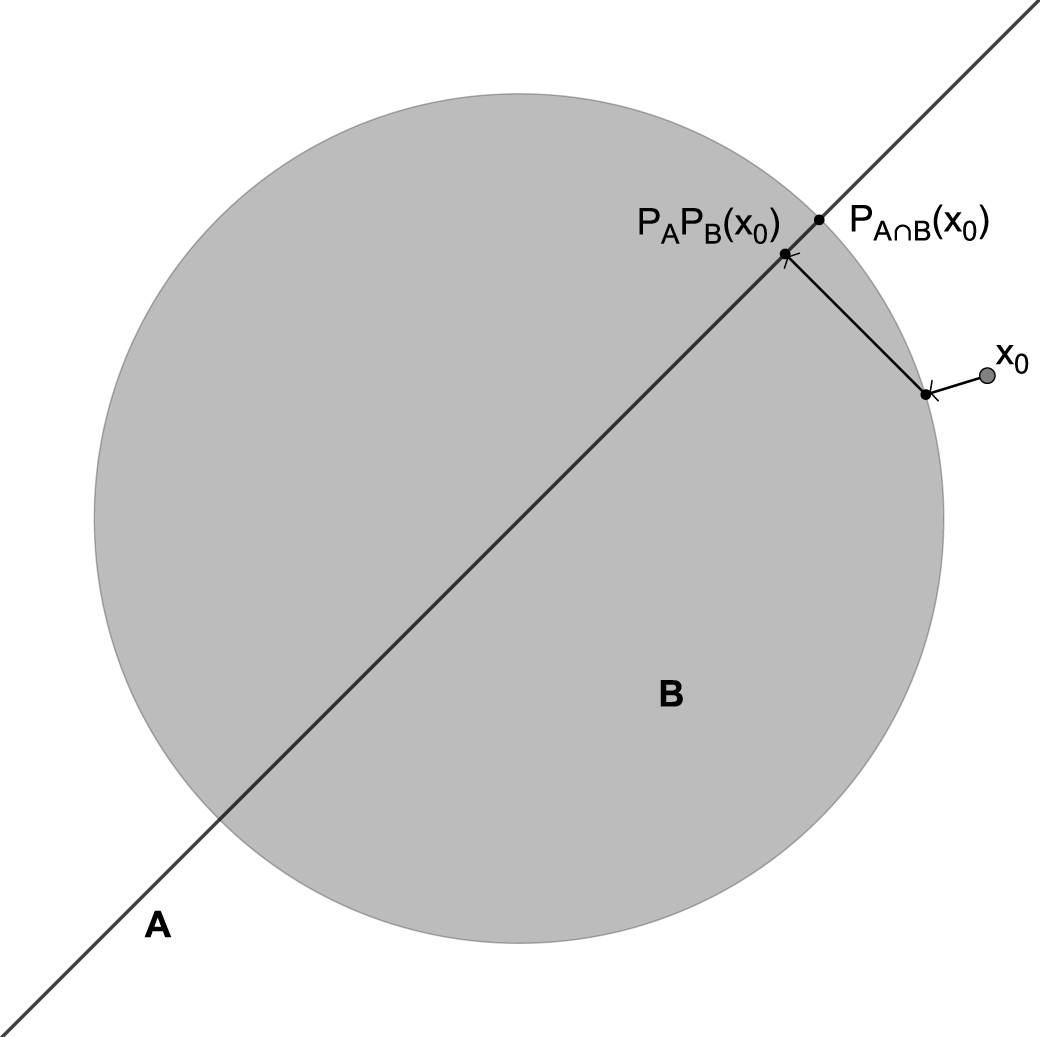}}\\
	\caption{Counterexamples: The figure on the left shows that $P_A(x_0) \notin B$ even if $P_BP_A = P_{A\cap B}$ for a nonconvex set $B$. The figure on the right illustrates that even when $B$ is convex and $P_BP_A = P_{A\cap B}$, it does not follow that $P_A$ and $P_B$ commute.}  \label{fig:myexample}
\end{figure}

Additionally, if $B$ is convex and any of the equivalent statements is true, then it does not follow that $P_AP_B={P_BP_A}$. In particular, $P_AP_B$ need not be equal to $P_{A\cap B}$. To visualize this, refer to Figure~\ref{fig:PBPAnot} where we redefined $B = \{(x,y)\in\mathbb{R}^2 : x^2+y^2 \leq 4\}$ to show that $P_AP_B(x_0) \neq P_{A\cap B}(x_0)$.

\begin{example} 
	We now consider the following examples to illustrate the previous theorem.
	\begin{enumerate}[label =(\alph*)]
		\item 
		Let $A,B \subseteq \mathbb{R}^2$ where 
		\begin{align*}
			A = \{(x,x): x \in \mathbb{R}\},\quad 
			B = \{(x,y): |x| + |y| \leq 1\}.
		\end{align*}
		In this example, $A$ is a subspace and $B$ is a closed convex subset of $\mathbb{R}^2$. Further, $P_BP_A \subseteq A$. Thus, all statements in Theorem~\ref{t:main} hold. In particular, $P_BP_A = P_{A\cap B}$. Refer to Figure~\ref{realconvexexample}. 
		\item 
		Let $A,B \subseteq \mathbb{R}^2$ where 
		\begin{align*}
			A = \{(x,0):x\in \mathbb{R}\},\quad
			B = \left\{(x,y): \sqrt{|x|} + \sqrt{|y|} \leq 1 \right\}.
		\end{align*}
		Here, $A$ is a subspace and $B$ is a nonconvex set. This is a particular example where $B$ is nonconvex but all of the statements in Theorem~\ref{t:main} still hold. Refer to Figure~\ref{realnonconvexexample}.
	\end{enumerate}	
\end{example}

\begin{figure}[h!]
	\centering
	\subfloat[$B$ is convex.\label{realconvexexample}]{\includegraphics[width = 0.5\linewidth]{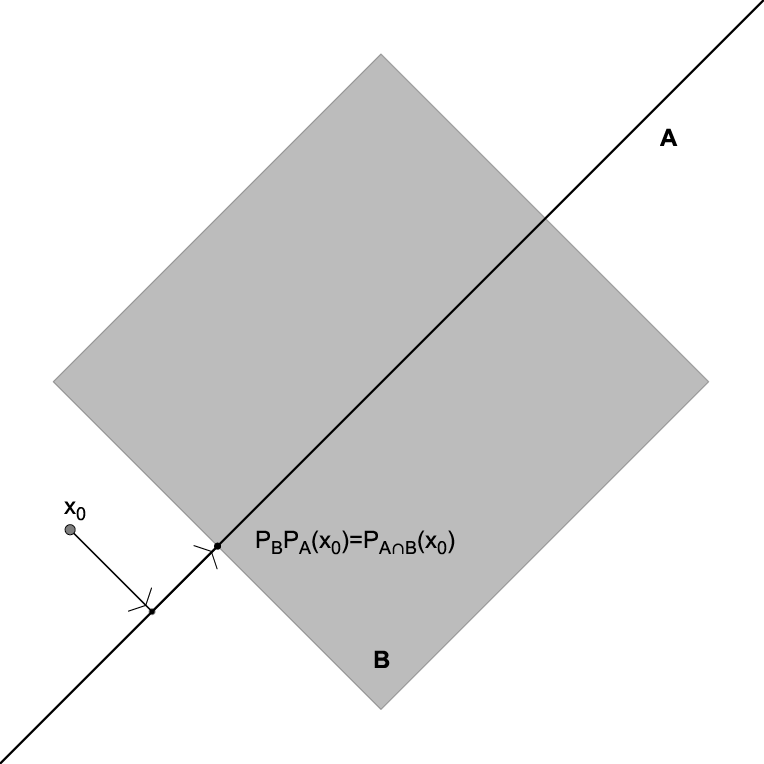}}
	\subfloat[$B$ is nonconvex.\label{realnonconvexexample}]{\includegraphics[width = 0.5\linewidth]{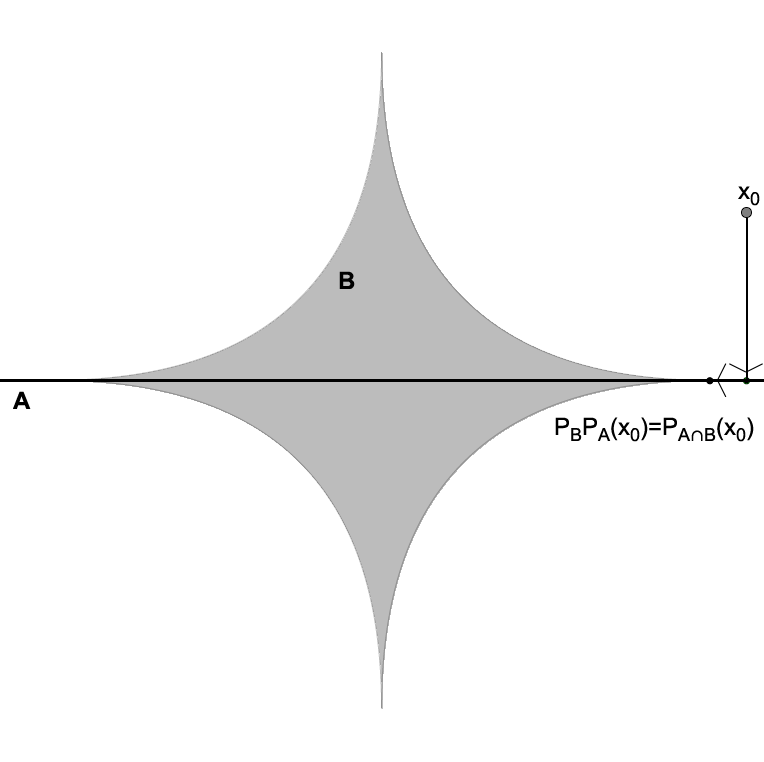}}\\
	\caption{The plot on the left shows that $P_BP_A = P_{A\cap B}$ which is equivalent to all other statements in Theorem~\ref{t:main} because $B$ is convex. The figure on the right is an example satisfying all statements in Theorem~\ref{t:main} even though $B$ is nonconvex.}
\end{figure}

\subsection{Constraint Reduction Reformulation}

The feasibility problem defined in \eqref{def:feasibility} may be solved using a projection algorithm that is applicable to an $r$-set feasibility problem. In particular, one may employ the product DR and the product MAP by defining the product spaces $C$ and $D$ as in \eqref{productC} and \eqref{productD}, respectively.

We now introduce a constraint reduction reformulation which also rewrites an $r$-set feasibility problem into a two-set. This is formalized in the following definition.

\begin{definition}[Constraint reduction reformulation]\label{constraintreduction}
	Let $K_1, K_2, \dots, K_r$ be subsets of $\mathcal{H}$. The \emph{constraint reduction reformulation} of the $r$-set feasibility problem in \eqref{def:feasibility} is the two-set feasibility problem given by
	\begin{equation*}
		\text{find~} \mathbf{x} := (x_1, x_2, \dots, x_{r-1})\in V\cap W\subseteq\mathcal{H}^{r-1},
	\end{equation*}
	where $V$ and $W$ denote the {reduced product space constraints} given by
	\begin{subequations} 
		\begin{align} 
			V &:=K_1\times K_2 \times \cdots \times K_{r-2} \times (K_{r-1}\cap K_r) \subseteq \mathcal{H}^{r-1}, \label{productU}\\
			W &:= \left\{(x_1, x_2, \dots, x_{r-1}) \in \mathcal{H}^{r-1}: x_1 = x_2 = \cdots = x_{r-1}\right\}. \label{productV}
		\end{align}
	\end{subequations}
	The associated mappings $Q_V$ and $P_W$ on $\mathcal{H}^{r-1}$ are defined as
	\begin{subequations}
		\begin{align}
			Q_V(\mathbf{x}) &:= P_{K_1}(x_1)\times P_{K_2}(x_2) \times \dots\times P_{K_{r-2}}(x_{r-2})\times P_{K_{r-1}}P_{K_r}(x_{r-1}), \label{proj:productU}\\
			P_W(\mathbf{x}) &:= \left(\frac{1}{r-1}\sum_{j=1}^{r-1} x_j, \frac{1}{r-1}\sum_{j=1}^{r-1} x_j, \dots, \frac{1}{r-1}\sum_{j=1}^{r-1} x_j\right). \label{proj:productV}
		\end{align}
	\end{subequations}
\end{definition}
This new reformulation can be viewed as two-step process that involves rewriting the original feasibility problem by replacing a pair of its constraint sets with their intersection, followed by an application of Pierra's product space technique to the revised problem with reduced number of constraints. In particular, $K_{r-1} \cap K_{r}$ replaces $K_{r-1}\times K_r$ in the definition of $V$ so that $V$ is a Cartesian product of only $r-1$ sets. The operator $Q_V$ is defined to take the role of $P_C$ by replacing $P_{K_{r-1}}$ and $P_{K_r}$ with the composition $P_{K_{r-1}}P_{K_r}$. Computing $Q_V$ requires the same knowledge about the individual projectors as in $P_C$ in \eqref{proj:productC}. Note however that $Q_V$, in general, is not the projector onto $V$. Furthermore, we note that $W$ is a subspace with dimension one less than that of $D$ defined in \eqref{productD}, and $P_W$ is the projector onto $W$ which takes the role of $P_D$ defined in \eqref{proj:productD}.

We remark that $V$, $W$ and their associated mappings may be reformulated differently to allow for the intersection of other pairs of constraint sets. This will further cut down the dimension of the reduced product space constraints and the ambient Hilbert space. For simplicity of exposition, we focus on the set in Definition~\ref{constraintreduction}, but our results extend to the more general case.

As the following lemma shows, the constraint reduction reformulation still enjoys the equivalence statement \eqref{productspaceequivalence} satisfied by the product space reformulation.

\begin{lemma}\label{l:newproductspaceequivalence}
	Let $K_1, K_2, \dots, K_r$ be subsets of $\mathcal{H}$, and consider $V$ and $W$ as defined in \eqref{productU} and \eqref{productV}, respectively. Then 
	\begin{equation*}
		x^{\ast} \in \bigcap_{j=1}^r K_j  \ \iff \ \mathbf{x}^{\ast}:=(x^{\ast},x^{\ast},\dots,x^{\ast}) \in V \cap W.
	\end{equation*}
\end{lemma}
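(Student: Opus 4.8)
The plan is to prove the two directions of the equivalence separately, both by direct unwinding of the definitions of $V$ and $W$ in \eqref{productU} and \eqref{productV}. The key observation is that membership in the diagonal subspace $W$ forces all coordinates of $\mathbf{x}^\ast$ to be equal, so the problem reduces to checking that a single point $x^\ast$ lies in each constraint set, with the twist that the final coordinate must lie in $K_{r-1}\cap K_r$ rather than in $K_{r-1}$ and $K_r$ separately.

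For the forward direction, suppose $x^\ast \in \bigcap_{j=1}^r K_j$. Then $x^\ast \in K_j$ for every $j \in \{1,\dots,r-2\}$, and also $x^\ast \in K_{r-1}$ and $x^\ast \in K_r$, hence $x^\ast \in K_{r-1}\cap K_r$. Therefore $\mathbf{x}^\ast = (x^\ast,\dots,x^\ast)$ has its $j$-th coordinate in $K_j$ for $j \le r-2$ and its $(r-1)$-st coordinate in $K_{r-1}\cap K_r$, so $\mathbf{x}^\ast \in V$ by \eqref{productU}. Since all coordinates of $\mathbf{x}^\ast$ coincide, $\mathbf{x}^\ast \in W$ by \eqref{productV}. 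Hence $\mathbf{x}^\ast \in V \cap W$.

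For the reverse direction, suppose $\mathbf{x}^\ast := (x^\ast,\dots,x^\ast) \in V \cap W$. Membership in $W$ is automatic here since the coordinates are already equal, so the content comes from $\mathbf{x}^\ast \in V$: by \eqref{productU} this means $x^\ast \in K_j$ for each $j \in \{1,\dots,r-2\}$ and $x^\ast \in K_{r-1}\cap K_r$, i.e.\ $x^\ast \in K_{r-1}$ and $x^\ast \in K_r$. Combining, $x^\ast \in \bigcap_{j=1}^r K_j$. (Strictly, to state the reverse implication for an arbitrary $\mathbf{x}^\ast \in V \cap W$ one first uses $W$ to write $\mathbf{x}^\ast = (x^\ast,\dots,x^\ast)$ for some $x^\ast \in \mathcal{H}$, then argues as above; but as phrased in the lemma the point is already given in diagonal form.)

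There is no real obstacle here — the statement is essentially a bookkeeping check that replacing $K_{r-1}\times K_r$ by $K_{r-1}\cap K_r$ in the product and dropping one diagonal coordinate does not change the solution set. The only point requiring the slightest care is the identification $x^\ast \in K_{r-1}\cap K_r \iff (x^\ast \in K_{r-1}$ and $x^\ast \in K_r)$, which is just the definition of intersection, together with keeping track that the reduced product $\mathcal{H}^{r-1}$ has one fewer factor than $\mathcal{H}^r$. I would present this as a short paragraph-style proof rather than displayed equations, since no computation is involved.
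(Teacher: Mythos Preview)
Your proof is correct and takes essentially the same approach as the paper, which simply unwinds the definitions of $V$ and $W$; the paper's version is just more terse, handling the forward direction in one line and declaring the reverse implication ``straightforward.''
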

\begin{proof}
	If $x^{\ast} \in \bigcap_{j=1}^r K_j$, then $x^{\ast} \in K_j$ for all $j \in \{1,2,\dots, r-2\}$ and $x^{\ast} \in K_{r-1}\cap K_r$. Consequently, $\mathbf{x}^{\ast} \in V\cap W$. The reverse implication is straightforward.
\end{proof}

We now apply the constraint reduction reformulation to the method of alternating projections to deduce our first constraint reduced algorithm.

\subsubsection*{Constraint Reduction Reformulation for MAP}

\begin{definition}
	Let $K_1, K_2, \dots, K_r$ be proximinal subsets of $\mathcal{H}$. The \emph{constraint-reduced MAP operator}, denoted by $S$, is defined by
	\begin{equation*}
		S:= P_WQ_V,
	\end{equation*}
	where $Q_V$ and $P_W$ are the operators defined in \eqref{proj:productU} and \eqref{proj:productV}, respectively.
\end{definition}

In the next theorem, we show global convergence of constraint-reduced MAP in the convex setting.

\begin{theorem}\label{t:crMAP}
	Let $K_1, K_2, \dots, K_r$ be closed convex subsets of $\mathcal{H}$ with nonempty intersection. Then the following statements hold.
	\begin{enumerate}[label =(\alph*)]
		\item\label{t:crMAP_fixedpoints} 
		$\Fix S = V\cap W = \{(x_1,x_2,\dots,x_{r-1})\, : \, x_1 = x_2 = \dots = x_{r-1} \in \bigcap_{j=1}^r K_j\}$.
		\item\label{t:crMAP_averaged} 
		$S$ is $3/4$-averaged. If, in addition, $P_{K_{r-1}}(K_r) \subseteq K_r$ and $K_r$ is affine, then $S$ is $2/3$-averaged.
		\item\label{t:crMAP_cvg} 
		For any $\mathbf{x}_0\in \mathcal{H}^{r-1}$, the sequence $(\mathbf{x}_n)_{n\in \mathbb{N}}$ generated by $\mathbf{x}_{n+1} = S(\mathbf{x}_n)$ converges weakly to $\mathbf{x}^\ast = (x^\ast, x^\ast, \dots, x^\ast) \in V \cap W$ with $x^\ast \in \bigcap_{j=1}^r K_j$.
	\end{enumerate}
\end{theorem}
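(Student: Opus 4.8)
The plan is to prove the three parts of Theorem~\ref{t:crMAP} in order, since each relies on the previous one together with the general machinery recalled in Section~\ref{sec:preliminaries}.

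For part~\ref{t:crMAP_fixedpoints}, I would first observe that $P_W$ is the projector onto the subspace $W$ and, by Lemma~\ref{l:newproductspaceequivalence}, $V\cap W$ coincides with the diagonal copy of $\bigcap_{j=1}^r K_j$; so it only remains to show $\Fix S = V\cap W$. The inclusion $V\cap W\subseteq \Fix S$ is direct: if $\mathbf{x}=(x,\dots,x)$ with $x\in\bigcap_{j=1}^r K_j$, then each coordinate map fixes $x$ (for the last coordinate, $x\in K_{r-1}\cap K_r$ gives $P_{K_r}(x)=x$ and then $P_{K_{r-1}}(x)=x$), so $Q_V(\mathbf{x})=\mathbf{x}$ and $P_W(\mathbf{x})=\mathbf{x}$. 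For the reverse inclusion, I would use that $Q_V$ is a product of firmly nonexpansive (hence $1/2$-averaged) operators in its first $r-2$ coordinates and, in the last coordinate, the composition $P_{K_{r-1}}P_{K_r}$ of two $1/2$-averaged operators, which by Proposition~\ref{p:averagedoperators}\ref{p:averagedoperators_composition} is $2/3$-averaged; thus $Q_V$ is averaged by Proposition~\ref{p:productoperators}\ref{p:productoperators_averaged}, with $\Fix Q_V = K_1\times\cdots\times K_{r-2}\times \Fix(P_{K_{r-1}}P_{K_r})$ by Proposition~\ref{p:productoperators}\ref{p:productoperators_fix} and Proposition~\ref{p:compositionfixedpoint}; since $\Fix(P_{K_{r-1}}P_{K_r})=K_{r-1}\cap K_r$ (again by Proposition~\ref{p:compositionfixedpoint}, using $K_{r-1}\cap K_r\neq\varnothing$), we get $\Fix Q_V = V$. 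Then $\Fix S = \Fix(P_W Q_V) = \Fix P_W\cap\Fix Q_V = W\cap V$ by Proposition~\ref{p:compositionfixedpoint} applied to the two averaged operators $P_W$ and $Q_V$, whose common fixed point set is nonempty by the hypothesis that $\bigcap_{j=1}^r K_j\neq\varnothing$ together with Lemma~\ref{l:newproductspaceequivalence}.

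For part~\ref{t:crMAP_averaged}, the computation is already set up: $Q_V$ is the product of $r-2$ firmly nonexpansive operators and one $2/3$-averaged operator $P_{K_{r-1}}P_{K_r}$, so $Q_V$ is $\max\{1/2,2/3\}=2/3$-averaged by Proposition~\ref{p:productoperators}\ref{p:productoperators_averaged}; since $P_W$ is firmly nonexpansive, i.e.\ $1/2$-averaged, Proposition~\ref{p:averagedoperators}\ref{p:averagedoperators_composition} applied to the two-operator composition $P_W Q_V$ with $\alpha_1=1/2$, $\alpha_2=2/3$ gives
\begin{equation*}
	\alpha=\left(1+\left(\frac{1/2}{1/2}+\frac{2/3}{1/3}\right)^{-1}\right)^{-1}=\left(1+\frac13\right)^{-1}=\frac34.
\end{equation*}
For the improved bound, if $P_{K_{r-1}}(K_r)\subseteq K_r$ and $K_r$ is affine (equivalently, after interchanging the labels so that the affine set plays the role of $A$ in Theorem~\ref{t:main}), Theorem~\ref{t:main} yields $P_{K_{r-1}}P_{K_r}=P_{K_{r-1}\cap K_r}$, which is firmly nonexpansive as a projector onto a closed convex set (Proposition~\ref{ex:projisalpha}); hence $Q_V$ is $1/2$-averaged, and redoing the same composition calculation with $\alpha_1=\alpha_2=1/2$ gives $\alpha=(1+(1+1)^{-1})^{-1}=2/3$. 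Here I should be a little careful about which of $K_{r-1}$, $K_r$ must be affine to match the hypotheses of Theorem~\ref{t:main}; the statement as written takes $K_r$ affine and $P_{K_{r-1}}(K_r)\subseteq K_r$, which is exactly statement~\ref{state1} of Theorem~\ref{t:main} with $A=K_r$, $B=K_{r-1}$, so $P_BP_A=P_{K_{r-1}}P_{K_r}=P_{A\cap B}=P_{K_{r-1}\cap K_r}$ as needed — this is the one place to state the correspondence explicitly.

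Part~\ref{t:crMAP_cvg} is then immediate: $S$ is $\alpha$-averaged with $\alpha\in\{3/4,2/3\}\subseteq(0,1)$, and $\Fix S = V\cap W\neq\varnothing$ by part~\ref{t:crMAP_fixedpoints} and the nonempty-intersection hypothesis, so Opial's theorem (Proposition~\ref{p:Opial}) gives weak convergence of $(\mathbf{x}_n)$ to some $\mathbf{x}^\ast\in\Fix S = V\cap W$; the explicit form $\mathbf{x}^\ast=(x^\ast,\dots,x^\ast)$ with $x^\ast\in\bigcap_{j=1}^r K_j$ comes from the description of $V\cap W$ in part~\ref{t:crMAP_fixedpoints} (equivalently Lemma~\ref{l:newproductspaceequivalence}). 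I expect no genuine obstacle here — the only real content is assembling the averagedness constants correctly through Propositions~\ref{p:averagedoperators} and~\ref{p:productoperators}, and invoking Theorem~\ref{t:main} with the labels matched correctly for the sharper constant; everything else is bookkeeping with the fixed-point-set identities.
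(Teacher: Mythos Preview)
Your proposal is correct and follows essentially the same approach as the paper's proof, invoking the same auxiliary results (Propositions~\ref{ex:projisalpha}, \ref{p:averagedoperators}, \ref{p:productoperators}, \ref{p:compositionfixedpoint}, \ref{p:Opial}, Lemma~\ref{l:newproductspaceequivalence}, and Theorem~\ref{t:main}) in the same order. One small correction at the spot you flagged: with $A=K_r$ and $B=K_{r-1}$, the hypothesis $P_{K_{r-1}}(K_r)\subseteq K_r$ is $P_B(A)\subseteq A$, which is statement~\ref{state3} of Theorem~\ref{t:main}, not~\ref{state1}; the implication \ref{state3}$\implies$\ref{state5} still gives $P_{K_{r-1}}P_{K_r}=P_{K_{r-1}\cap K_r}$ as you need (and without even requiring convexity of $B$).
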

\begin{proof}
	\ref{t:crMAP_fixedpoints}: We first note that $P_{K_j}$ is $1/2$-averaged for each $j \in \{1,2,\dots,r\}$ by Example~\ref{ex:projisalpha}, and then that $P_{K_{r-1}}P_{K_r}$ is $2/3$-averaged by Proposition~\ref{p:averagedoperators}\ref{p:averagedoperators_composition}. We deduce from Proposition~\ref{p:productoperators}\ref{p:productoperators_averaged} that $Q_V$ is $2/3$-averaged. 
	
	Since $\Fix P_{K_j} = K_j$ for each $j \in \{1,2,\dots,r\}$ and $\bigcap_{j=1}^r K_j \neq \varnothing$, we have $\Fix P_{K_{r-1}}P_{K_r} = K_{r-1}\cap K_r$ by Proposition~\ref{p:compositionfixedpoint}, and then $\Fix Q_V = K_1 \times \dots \times K_{r-2} \times (K_{r-1}\cap K_r) = V$ by Proposition~\ref{p:productoperators}\ref{p:productoperators_fix}. Noting that $W$ is also closed convex set, we have from Example~\ref{ex:projisalpha} that $P_W$ is $1/2$-averaged. Moreover, $\Fix P_W = W$ and, by Lemma~\ref{l:newproductspaceequivalence}, 
	\begin{equation*}
		V\cap W = \left\{(x_1,x_2,\dots,x_{r-1})\, : \, x_1 = x_2 = \dots = x_{r-1} \in \bigcap_{j=1}^r K_j\right\} \neq \varnothing.
	\end{equation*}
	Applying Proposition~\ref{p:compositionfixedpoint} again to $Q_V$ and $P_W$ gives us $\Fix S = \Fix Q_V\cap \Fix P_W = V\cap W$. 
	
	\ref{t:crMAP_averaged}: As shown in \ref{t:crMAP_fixedpoints}, $Q_V$ is $2/3$-averaged and $P_W$ is $1/2$-averaged. By applying Proposition~\ref{p:averagedoperators}\ref{p:averagedoperators_composition}, $S$ is $3/4$-averaged. Let us now assume that $P_{K_{r-1}}(K_r) \subseteq K_r$ and $K_r$ is affine. Theorem~\ref{t:main} yields $P_{K_{r-1}}P_{K_r} = P_{K_{r-1}\cap K_r}$. This makes $P_{K_{r-1}}P_{K_r}$ and $Q_V$ both $1/2$-averaged. Consequently, $S$ is $2/3$-averaged as given again by Proposition~\ref{p:averagedoperators}\ref{p:averagedoperators_composition}.  
	
	\ref{t:crMAP_cvg}: We have from \ref{t:crMAP_fixedpoints} that $\Fix S = V\cap W \neq \varnothing$. Since $S$ is $3/4$-averaged by \ref{t:crMAP_averaged}, invoking Proposition~\ref{p:Opial} yields the desired result.
\end{proof}

\begin{remark}
	Without the additional assumptions of Theorem~\ref{t:crMAP}\ref{t:crMAP_averaged}, the operator $P_{K_{r-1}}P_{K_r}$ is not $1/2$-averaged in general, even when $K_{r-1}$ and $K_r$ are both closed subspaces of $\mathbb{R}^2$ \cite[Example~4.2.5]{bauschke1997method}. As a consequence, the operator $Q_V$ is not $1/2$-averaged in general. Nevertheless, these extra assumptions are not necessary in obtaining the fixed point result in  Theorem~\ref{t:crMAP}\ref{t:crMAP_fixedpoints} and the convergence result described in Theorem~\ref{t:crMAP}\ref{t:crMAP_cvg}. When these assumptions are present, Theorem~\ref{t:crMAP}\ref{t:crMAP_cvg} follows from Theorem~\ref{t:main}\ref{state1}\&\ref{state5} and the convergence analysis of MAP for two closed convex sets.
\end{remark}

\subsubsection*{Constraint Reduction Reformulation for DR}

\begin{definition}
	Let $K_1, K_2, \dots, K_r$ be proximinal subsets of $\mathcal{H}$. The \emph{constraint-reduced DR operator}, denoted by $T$, is defined by
	\begin{equation*}
		T := \I - P_W + Q_VR_W = \I - P_W + Q_V(2P_W-\I), %\I - Q_V + P_W(2Q_V-\I)
	\end{equation*}
	where $Q_V$ and $P_W$ are the operators defined in \eqref{proj:productU} and \eqref{proj:productV}, respectively.
\end{definition}

We reiterate that $Q_V$ is not necessarily a projector onto $V$, so that the classic convergence results for DR (or product DR) do not easily follow for $T$. Although a similar characterization of its fixed points still holds, we do not have a general convergence result analogous to Theorem~\ref{t:crMAP} for the constraint-reduced DR. But in particular cases where we know more about the structure of $K_{r-1}$ and $K_r$, we can prove convergence.

\begin{theorem}\label{t:crDR}
	Let $K_1, K_2, \dots, K_r$ be proximinal subsets of $\mathcal{H}$ with nonempty intersection. Suppose that $P_{K_{r-1}}(K_r) \subseteq K_r$. Then the following statements hold.
	\begin{enumerate}[label =(\alph*)]
		\item\label{t:crDR_fixedpoints} 
		$P_W(\Fix T) = V\cap W$. In particular, if $(x_1,x_2,\dots,x_{r-1}) \in \Fix T$, then
		\begin{equation*}
			\frac{1}{r-1}\sum_{j=1}^{r-1} x_j \in \bigcap_{j=1}^r K_j.
		\end{equation*}
		
		\item\label{t:crDR_DR}
		If $K_r$ is affine, then $T = (\I + R_VR_W)/2$ coincides with the DR operator for $W$ and $V$. 
		\item\label{t:crDR_cvg} 
		If $K_1, \dots K_{r-1}$ are convex and $K_r$ is affine, then $T$ is firmly nonexpansive. Consequently, for any $\mathbf{x}_0\in \mathcal{H}^{r-1}$, the sequence $(\mathbf{x}_n)_{n\in \mathbb{N}}$ generated by $\mathbf{x}_{n+1} = T(\mathbf{x}_n)$ converges weakly to a point $\mathbf{x}^\ast = (x_1^\ast, x_2^\ast, \dots, x_{r-1}^\ast) \in \Fix T$. Moreover, writing $\mathbf{x}_n = (x_{1,n}, x_{2,n}, \dots, x_{r-1,n})$, the sequence $\left(\frac{1}{r-1}\sum_{j=1}^{r-1} x_{j,n}\right)_{n\in \mathbb{N}}$ converges weakly to 
		\begin{equation*}
			\frac{1}{r-1}\sum_{i=j}^{r-1} x_j^\ast \in \bigcap_{j=1}^r K_j.
		\end{equation*}
		
	\end{enumerate}
\end{theorem}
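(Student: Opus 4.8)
The plan is to prove the three parts in order, leaning on the machinery already assembled. For part~\ref{t:crDR_fixedpoints}, I would follow the standard fixed-point argument for Douglas--Rachford-type operators. Observe that $\mathbf{x} \in \Fix T$ means $P_W(\mathbf{x}) = Q_V R_W(\mathbf{x})$. Since $W$ is a closed subspace and $P_W(\mathbf{x})$ lies in $W$, the point $P_W(\mathbf{x})$ then lies in $\Fix Q_V = V$ (using that $\Fix Q_V = K_1 \times \cdots \times K_{r-2} \times (K_{r-1}\cap K_r) = V$, which follows from Proposition~\ref{p:compositionfixedpoint} applied to $P_{K_{r-1}}P_{K_r}$ together with Proposition~\ref{p:productoperators}\ref{p:productoperators_fix}, exactly as in the proof of Theorem~\ref{t:crMAP}\ref{t:crMAP_fixedpoints}). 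Hence $P_W(\mathbf{x}) \in V \cap W$, giving $P_W(\Fix T) \subseteq V \cap W$. For the reverse inclusion, if $\mathbf{y} \in V \cap W$ then $\mathbf{y} = P_W(\mathbf{y}) = Q_V(\mathbf{y})$, so $\mathbf{y} \in \Fix T$ and $\mathbf{y} = P_W(\mathbf{y}) \in P_W(\Fix T)$. The ``in particular'' statement is then immediate from the formula \eqref{proj:productV} for $P_W$ and the description of $V \cap W$ in Lemma~\ref{l:newproductspaceequivalence}.

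For part~\ref{t:crDR_DR}, the key point is that when $K_r$ is affine (hence a closed affine subspace) and $P_{K_{r-1}}(K_r) \subseteq K_r$, Theorem~\ref{t:main}, applied with $A := K_r$ and $B := K_{r-1}$, gives $P_{K_{r-1}}P_{K_r} = P_{K_{r-1}\cap K_r}$. (One must check the hypothesis direction: Theorem~\ref{t:main}\ref{state1} with $A = K_r$, $B = K_{r-1}$ reads $P_{K_r}(K_{r-1}) \subseteq K_{r-1}$, which is \emph{not} what we are given --- so instead I would invoke it with $A = K_r$, $B = K_{r-1}$ via statement~\ref{state3}, $P_{K_{r-1}}(K_r)\subseteq K_r$, since the chain \ref{state1}$\implies$\ref{state2}$\implies$\ref{state3}$\implies$\ref{state4}$\implies$\ref{state5} means \ref{state3} already implies \ref{state5}, i.e. $P_{K_{r-1}}P_{K_r} = P_{K_r \cap K_{r-1}}$.) Consequently $Q_V = P_{K_1}\times\cdots\times P_{K_{r-2}}\times P_{K_{r-1}\cap K_r}$ is exactly the product-space projector $P_V$ onto $V$, so $T = \I - P_W + P_V R_W$, which by the identity recorded after the definition of the DR operator equals $(\I + R_V R_W)/2 = T_{W,V}$.

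For part~\ref{t:crDR_cvg}, once part~\ref{t:crDR_DR} identifies $T$ with $T_{W,V}$, I would note that $W$ is a closed convex (indeed subspace) set and $V = K_1\times\cdots\times K_{r-2}\times(K_{r-1}\cap K_r)$ is closed and convex since each $K_1,\dots,K_{r-1}$ is convex and $K_r$ affine makes $K_{r-1}\cap K_r$ convex; moreover $V \cap W \neq \varnothing$ by Lemma~\ref{l:newproductspaceequivalence} and the nonempty-intersection hypothesis. Then $R_W$ and $R_V$ are nonexpansive (Proposition~\ref{ex:projisalpha}), so $R_V R_W$ is nonexpansive and $T = (\I + R_VR_W)/2$ is $1/2$-averaged, i.e. firmly nonexpansive. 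Opial's theorem (Proposition~\ref{p:Opial}) then yields weak convergence of $(\mathbf{x}_n)$ to some $\mathbf{x}^\ast \in \Fix T$. For the shadow sequence, $P_W$ is (weakly) continuous being linear and bounded, so $P_W(\mathbf{x}_n) \wto P_W(\mathbf{x}^\ast)$; reading off the first coordinate via \eqref{proj:productV} gives $\frac{1}{r-1}\sum_{j=1}^{r-1} x_{j,n} \wto \frac{1}{r-1}\sum_{j=1}^{r-1} x_j^\ast$, and by part~\ref{t:crDR_fixedpoints} this limit lies in $\bigcap_{j=1}^r K_j$.

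The main obstacle is a bookkeeping one rather than a deep one: getting the hypotheses of Theorem~\ref{t:main} pointed in the correct direction (which set plays the role of the affine $A$ and which the proximinal $B$, and which of the equivalent statements \ref{state1}--\ref{state5} is actually available from ``$P_{K_{r-1}}(K_r)\subseteq K_r$''). Once $P_{K_{r-1}}P_{K_r} = P_{K_{r-1}\cap K_r}$ is secured, everything reduces to the already-established theory of DR for two closed convex sets in a product Hilbert space, so parts~\ref{t:crDR_DR} and~\ref{t:crDR_cvg} are essentially immediate.
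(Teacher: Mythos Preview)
Your argument for part~\ref{t:crDR_fixedpoints} has a genuine gap. In this part the sets $K_1,\dots,K_r$ are merely proximinal, not convex, so the projectors $P_{K_j}$ need not be averaged and Proposition~\ref{p:compositionfixedpoint} cannot be invoked to identify $\Fix(P_{K_{r-1}}P_{K_r})$; the appeal to ``exactly as in the proof of Theorem~\ref{t:crMAP}\ref{t:crMAP_fixedpoints}'' is misplaced for the same reason, since that proof relies on convexity throughout. More fundamentally, from $\mathbf{x}\in\Fix T$ one obtains only $P_W(\mathbf{x})\in Q_VR_W(\mathbf{x})\subseteq \ran Q_V$, not $P_W(\mathbf{x})\in\Fix Q_V$; your text asserts the latter without justification. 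The missing step is precisely where the hypothesis $P_{K_{r-1}}(K_r)\subseteq K_r$ must be used: it gives
\[
P_{K_{r-1}}P_{K_r}(\mathcal{H}) = P_{K_{r-1}}(K_r)\subseteq K_{r-1}\cap K_r,
\]
and hence $\ran Q_V\subseteq V$, so that $P_W(\mathbf{x})\in V\cap W$. This is exactly the route the paper takes. As written, your proof of \ref{t:crDR_fixedpoints} never uses the hypothesis $P_{K_{r-1}}(K_r)\subseteq K_r$ at all, which should be a warning sign.

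Your treatment of parts~\ref{t:crDR_DR} and~\ref{t:crDR_cvg} is correct and matches the paper's approach; in fact you are more careful than the paper in tracking which implication of Theorem~\ref{t:main} is being used (the paper simply says ``Using Theorem~\ref{t:main}''). Your argument for the shadow sequence in \ref{t:crDR_cvg}, via weak continuity of the bounded linear operator $P_W$, is a legitimate and slightly simpler alternative to the paper's citation of \cite{svaiter}.
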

\begin{proof} 
	\ref{t:crDR_fixedpoints}: First, it follows from $P_{K_{r-1}}(K_r) \subseteq K_r$ that $P_{K_{r-1}}P_{K_r}(\mathcal{H}) = P_{K_{r-1}}(K_r) \subseteq K_{r-1}\cap K_r$, and thus, 
	\begin{equation*}
		Q_V(\mathcal{H}^{r-1}) \subseteq K_1 \times \dots \times K_{r-2} \times K_{r-1}\cap K_r = V.
	\end{equation*}
	Let $\mathbf{x} \in \Fix T$. Then $\mathbf{x} \in T(\mathbf{x}) = \mathbf{x} - P_W(\mathbf{x}) + Q_VR_W(\mathbf{x})$, which implies that $P_W(\mathbf{x}) \in Q_VR_W(\mathbf{x}) \subseteq Q_V(\mathcal{H}^{r-1}) \subseteq V$. Therefore, $P_W(\mathbf{x}) \in V\cap W$. We deduce that $P_W(\Fix T) \subseteq V\cap W$. On the other hand, it is straightforward to see that $V\cap W \subseteq \Fix T$, which yields $V\cap W = P_W(V\cap W) \subseteq P_W(\Fix T)$. Hence, $P_W(\Fix T) = V\cap W$. 
	
	Now, if $\mathbf{x} = (x_1,x_2,\dots,x_{r-1}) \in \Fix T$, then 
	\begin{equation*}
		P_W(\mathbf{x}) = \left(\frac{1}{r-1}\sum_{j=1}^{r-1} x_j, \frac{1}{r-1}\sum_{j=1}^{r-1} x_j, \dots, \frac{1}{r-1}\sum_{j=1}^{r-1} x_j\right) \in P_W(\Fix T) = V\cap W,
	\end{equation*}
	and the conclusion follows from Lemma~\ref{l:newproductspaceequivalence}.
	
	\ref{t:crDR_DR}: Assume that $K_r$ is affine. Since $K_r$ is proximinal, it is closed (see \cite[Theorem~3.1]{deutsch}), and we have that $K_r$ is a closed affine subspace. Using Theorem~\ref{t:main}, $P_{K_{r-1}}P_{K_r} = P_{K_{r-1}\cap K_r}$, and so $Q_V = P_V$ is the projector onto $V$. This implies that $T = \I - P_W + P_VR_W = (\I+R_VR_W)/2$ is the DR operator for $W$ and $V$. 
	
	\ref{t:crDR_cvg}: Assume that $K_1, \dots K_{r-1}$ are convex and $K_r$ is affine. By \ref{t:crDR_DR}, $T = (\I+R_VR_W)/2$. Since every proximinal set in a Hilbert space is closed (see \cite[Theorem~3.1]{deutsch}), we derive that $V$ is convex and closed. As $W$ is also convex and closed, by Example~\ref{ex:projisalpha}, $R_WR_V$ is nonexpansive and hence $T$ is $1/2$-averaged, i.e., firmly nonexpansive. 
	
	Finally, since $\bigcap_{j=1}^r K_j \neq \varnothing$, Lemma~\ref{l:newproductspaceequivalence} implies that $V\cap W\neq \varnothing$. The weak convergence of $(\mathbf{x}_n)_{n\in \mathbb{N}}$ to $\mathbf{x}^\ast \in \Fix T$ follows from Proposition~\ref{p:Opial}, see also \cite[Theorem~1]{lions}. We also derive from \cite[Theorem~1]{svaiter} that $(P_W(\mathbf{x}_n))_{n\in \mathbb{N}}$ converges weakly to $P_W(\mathbf{x}^\ast) \in P_W(\Fix T) = V\cap W$. This completes the proof. 
\end{proof}

We wish to highlight that the constraint reduction reformulation for closed convex sets $K_1,K_2,\dots,K_r$ with  additional assumptions that $P_{K_{r-1}}(K_r) \subseteq K_r$ and that $K_r$ is a closed affine subspace, coincides with a non-standard application of the product space reformulation since $P_{K_{r-1}}P_{K_r} = P_{K_{r-1}\cap K_r}$ by Theorem~\ref{t:main}. On the other hand, if we lift the convexity assumption on at least the set $K_{r-1}$ but assuming it is proximinal, then we still have $P_{K_{r-1}}P_{K_r} = P_{K_{r-1}\cap K_r}$ by Theorem~\ref{t:main}\ref{state1}\&\ref{state5}. This makes $Q_V$ a projector so that the convergence results can be deduced from the convergence analysis of the DR algorithm for two closed convex sets. In this case, the projector is no longer guaranteed to be nonexpansive and thus the convergence results for constraint-reduced operators like $S$ or $T$ do not necessarily follow. As we will see in  the next section, local convergence in nonconvex settings can still be guaranteed by replacing convexity with set regularity notions.

We end this section by noting that the one dimension reduction in the product spaces $V$ and $W$ is consequential to combining the pair of constraint sets $K_{r-1}$ and $K_r$. In general, given an $r$-set feasibility problem, we may pair up as many sets as possible, and replace each pair by their intersection to form the reformulated problem. This will allow for more reduction in dimensionality. It is relatively easy to read off from the proof of Theorem~\ref{t:crMAP} that such a problem reformulation will still yield a similar fixed point and global convergence results for the corresponding constraint-reduced MAP. Similarly, a corresponding constraint-reduced DR may be set up for solving such a reformulated problem. However, for a favorable fixed point result, the proof of Theorem~\ref{t:crDR} suggests that we must be clever in pairing up any two sets $K_i$ and $K_j$ in that they must satisfy $P_{K_i}(K_j) \subseteq K_j$, for $i,j \in \{1,2,\ldots,r\}$ with $i \neq j$. Moreover, for convergence, $K_j$ must be affine.

\subsection{Local Convergence of Constraint Reduced Algorithms}

In this subsection, $\mathcal{H}$ is finite-dimensional. Then a nonempty set in $\mathcal{H}$ is proximinal if and only if it is closed; see \cite[Corollary~3.15]{bcombettes}. Let $C$ be a nonempty closed subset of $\mathcal{H}$. The \emph{limiting normal cone} to $C$ at $x\in C$ (see \cite[Definition~1.1(ii) and Theorem~1.6]{mordukhovich}) can be given by 
\begin{equation*}
	N_C(x) = \left\{ \lim_{n\to +\infty} \lambda_n(z_n-x_n)\, : \, \lambda_n\geq 0, x_n\to x \text{~with~} z_n\in P_C(x_n) \right\}.
\end{equation*}
Recall from \cite{llmalick} that $C$ is \emph{superregular} at a point $x\in C$ if, for any $\varepsilon >0$, there exists $\delta >0$ such that, for all $y, z\in C\cap \mathbb{B}(x;\delta)$ and all $u\in N_C(z)$,
\begin{equation*}
	\langle u, y-z \rangle \leq \varepsilon\|u\|\|y-z\|.
\end{equation*}
A family of sets $\{K_1, K_2, \dots, K_r\}$ in $\mathcal{H}$ is said to be 
\begin{enumerate}[label =(\alph*)]
	\item 
	\emph{linearly regular} around $x\in \mathcal{H}$ if there exist $\kappa\geq 0$ and $\delta >0$ such that, for all $z\in \mathbb{B}(x;\delta)$,
	\begin{equation*}
		d_{K_1\cap K_2\cap \dots \cap K_r}(z) \leq \kappa\max\{d_{K_1}(z), d_{K_2}(z), \dots, d_{K_r}(z)\}.
	\end{equation*}    
	\item 
	\emph{strongly regular} at $x\in \mathcal{H}$ if 
	\begin{equation*}
		u_1 + u_2 + \dots + u_r = 0 \text{~~with~~} u_j\in N_{K_j}(x) \implies u_1 =u_2 =\dots =u_r = 0.    
	\end{equation*}
	When $r=2$, the strong regularity condition can be written as 
	\begin{equation*}
		N_{K_1}(x)\cap (-N_{K_2}(x)) = \{0\}.
	\end{equation*}
\end{enumerate}
Interested readers can find more discussion on linear regularity and strong regularity in \cite{bbauschke,dphan1,dphan2,kruger,llmalick,russell2018quantitative}.

\begin{proposition}\label{p:superregular}
	Let $\{K_1, K_2, \dots, K_r\}$ be a family of sets in $\mathcal{H}$. The following statements hold. 
	\begin{enumerate}[label =(\alph*)]
		\item\label{p:superregular_product} 
		If $K_j$ is superregular at $x_j\in K_j$ for each $j\in \{1,\dots,r\}$, then the product set $C:=K_1\times K_2\times\dots\times K_r\subseteq\mathcal{H}^r$ is superregular at $\mathbf{x}:=(x_1,x_2,\dots,x_r)\in C$. 
		\item\label{p:superregular_intersection} 
		If $K_j$ is superregular at $x\in K :=\bigcap_{j=1}^r K_j$ for each $j\in \{1,\dots,r\}$ and $\{K_1, K_2, \dots, K_r\}$ is strongly regular at every $z$ near $x$, then the intersection set $K$ is superregular at $x$.
	\end{enumerate}
\end{proposition}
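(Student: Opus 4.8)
The plan is to handle the two parts separately: part~\ref{p:superregular_product} reduces to the product formula for limiting normal cones plus Cauchy--Schwarz, while part~\ref{p:superregular_intersection} rests on the limiting normal cone intersection rule under strong regularity together with a uniform bound on the decompositions it produces.

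For \ref{p:superregular_product}, I would begin from the product formula $N_C(\mathbf{x}) = N_{K_1}(x_1)\times\cdots\times N_{K_r}(x_r)$, which holds in finite dimensions. Fix $\varepsilon>0$; superregularity of each $K_j$ at $x_j$ supplies a radius $\delta_j>0$, and I set $\delta:=\min_j\delta_j$. If $\mathbf{y},\mathbf{z}\in C\cap\mathbb{B}(\mathbf{x};\delta)$, then $\|y_j-x_j\|\le\|\mathbf{y}-\mathbf{x}\|\le\delta\le\delta_j$ and likewise for $z_j$, so each pair $y_j,z_j$ lies in $K_j\cap\mathbb{B}(x_j;\delta_j)$, while any $\mathbf{u}\in N_C(\mathbf{z})$ has components $u_j\in N_{K_j}(z_j)$. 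Expanding the inner product coordinate-wise via \eqref{def:innerproduct}, applying superregularity of each $K_j$, and then Cauchy--Schwarz in $\mathbb{R}^r$,
\begin{align*}
\langle \mathbf{u},\mathbf{y}-\mathbf{z}\rangle
=\sum_{j=1}^r\langle u_j,y_j-z_j\rangle
&\le \varepsilon\sum_{j=1}^r\|u_j\|\,\|y_j-z_j\|\\
&\le \varepsilon\Big(\sum_{j=1}^r\|u_j\|^2\Big)^{1/2}\Big(\sum_{j=1}^r\|y_j-z_j\|^2\Big)^{1/2}
=\varepsilon\|\mathbf{u}\|\,\|\mathbf{y}-\mathbf{z}\|,
\end{align*}
which is exactly superregularity of $C$ at $\mathbf{x}$.

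For \ref{p:superregular_intersection}, I would proceed in three steps. \emph{Step 1:} since $\{K_1,\dots,K_r\}$ is strongly regular at every $z$ near $x$ and $\mathcal{H}$ is finite-dimensional, the limiting normal cone intersection rule applies with base point $z$, giving $N_K(z)\subseteq N_{K_1}(z)+\cdots+N_{K_r}(z)$ for all $z\in K$ near $x$. \emph{Step 2 (the crux):} there exist $M\ge 0$ and $\rho>0$ such that every $z\in K\cap\mathbb{B}(x;\rho)$ and every $u\in N_K(z)$ admit a decomposition $u=u_1+\cdots+u_r$ with $u_j\in N_{K_j}(z)$ and $\sum_j\|u_j\|\le M\|u\|$. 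I would prove this by contradiction: if it fails there are $z_n\to x$ and $u^n\in N_K(z_n)\setminus\{0\}$ (the zero case being excluded automatically) whose minimal-total-norm decompositions $(u_1^n,\dots,u_r^n)$ (the minimum attained by restricting to a compact sublevel set of bounded total norm) satisfy $m_n:=\sum_j\|u_j^n\|>n\|u^n\|$; rescaling $v_j^n:=u_j^n/m_n$ yields $\sum_j\|v_j^n\|=1$, $v_j^n\in N_{K_j}(z_n)$, and $\|\sum_j v_j^n\|=\|u^n\|/m_n\to 0$. Passing to a convergent subsequence $v_j^n\to v_j$ and invoking the outer semicontinuity (robustness) of the limiting normal cone, one gets $v_j\in N_{K_j}(x)$ with $\sum_j v_j=0$ but $\sum_j\|v_j\|=1$, contradicting strong regularity of $\{K_1,\dots,K_r\}$ at $x$. \emph{Step 3:} given $\varepsilon>0$, apply superregularity of each $K_j$ at $x$ with tolerance $\varepsilon/M$ to obtain radii $\delta_j$, set $\delta:=\min\{\rho,\delta_1,\dots,\delta_r\}$, and for $y,z\in K\cap\mathbb{B}(x;\delta)$ and $u\in N_K(z)$ decompose $u=\sum_j u_j$ as in Step~2; since $y\in K\subseteq K_j$, $z\in K_j$ and $u_j\in N_{K_j}(z)$, superregularity of $K_j$ gives $\langle u_j,y-z\rangle\le(\varepsilon/M)\|u_j\|\,\|y-z\|$, so $\langle u,y-z\rangle\le(\varepsilon/M)\|y-z\|\sum_j\|u_j\|\le\varepsilon\|u\|\,\|y-z\|$.

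The main obstacle is the uniform decomposition bound of Step~2. The qualification condition at the single point $x$ is what powers the compactness/limiting contradiction, but one genuinely needs strong regularity to persist throughout a neighborhood of $x$ so that the intersection rule -- hence the existence of \emph{any} decomposition -- is available for all nearby base points; one must also be mildly careful that the minimal-total-norm decomposition is attained and that $u^n$ can be taken nonzero. The remaining ingredients -- the product normal cone formula, Cauchy--Schwarz, and robustness of the limiting normal cone -- are standard in finite dimensions and routine to assemble.
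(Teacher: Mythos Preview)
Your proof is correct and follows essentially the same route as the paper: part~(a) is identical (product normal cone formula, coordinate-wise superregularity, Cauchy--Schwarz), and part~(b) has the same three-step structure (intersection rule under strong regularity, uniform decomposition bound, summing the componentwise estimates). The only difference is that for the key uniform bound $\sum_j\|u_j\|\le M\|u\|$ in your Step~2, the paper invokes \cite[Proposition~2.4]{dphan2} together with \cite[Theorem~1.6]{mordukhovich}, whereas you supply a self-contained compactness/contradiction argument; your argument is in fact the standard way such a bound is established and makes explicit the uniformity in the base point $z$, which the paper's phrasing leaves slightly implicit.
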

\begin{proof}
	Let $\varepsilon>0$. 
	
	\ref{p:superregular_product}: Since $K_j$ is superregular at $x_j$, there exists $\delta_j>0$ such that, for all $y_j,z_j\in K_j\cap \mathbb{B}(x_j;\delta_j)$ and all $u_j\in N_{K_j}(z_j)$, we have
	\begin{equation}\label{eq:superreg j}
		\langle u_j,y_j-z_j\rangle \leq  \varepsilon\|u_j\|\|y_j-z_j\|. 
	\end{equation}
	Set $\delta=\min_{j=1,\dots,r}\delta_j$. Let $\mathbf{y}=(y_1,\dots,y_r),\mathbf{z}=(z_1,\dots,z_r)\in C\cap\mathbb{B}(\mathbf{x};\delta)$ and $\mathbf{u}=(u_1,\dots,u_n)\in N_C(z)=N_{K_1}(z)\times\dots\times N_{K_r}(z)$. Then \eqref{eq:superreg j} followed by the Cauchy--Schwarz inequality yields
	\begin{align*}
		\langle \mathbf{u},\mathbf{y}-\mathbf{z}\rangle
		= \sum_{j=1}^r\langle u_j,y_j-z_j\rangle 
		&\leq \varepsilon \sum_{j=1}^r\|u_j\|\|y_j-z_j\| \\
		&\leq \varepsilon \left( \sum_{j=1}^r\|u_j\|^2\right)^{1/2}\left( \sum_{j=1}^r\|y_j-z_j\|^2\right)^{1/2} \\
		&= \varepsilon\|\mathbf{u}\|\|\mathbf{y}-\mathbf{z}\|,
	\end{align*}
	which establishes the result.
	
	\ref{p:superregular_intersection}: Since $K_j$ is superregular at $x$, there exists $\delta >0$ such that, for all $y, z\in K\cap \mathbb{B}(x;\delta)\subseteq K_j\cap \mathbb{B}(x;\delta)$ and all $u_j\in N_{K_j}(z)$, we have
	\begin{equation*}
		\langle u_j,y-z \rangle \leq \varepsilon\|u_j\|\|y-z\|. 
	\end{equation*}
	Let $u\in N_{K}(z)$ be arbitrary. By assumption, shrinking $\delta$ if necessary, $\{K_1, K_2, \dots, K_r\}$ is strongly regular at $z$ and, by \cite[Corollary~3.37]{mordukhovich}, $u = \sum_{j=1}^r u_j$ with some $u_j\in N_{K_j}(z)$. We then derive from \cite[Proposition~2.4]{dphan2} and \cite[Theorem~1.6]{mordukhovich} the existence of $\zeta >0$ independent of $u_j$'s and $u$ such that 
	\begin{equation*}
		\|u\| = \Big\|\sum_{j=1}^r u_j\Big\| \geq \zeta\sum_{j=1}^r \|u_j\|.
	\end{equation*}
	Therefore,
	\begin{equation*}
		\langle u,y-z \rangle = \sum_{j=1}^r \langle u_j,y-z \rangle \leq \varepsilon\sum_{j=1}^r \|u_j\|\|y-z\| \leq \frac{\varepsilon}{\zeta}\|u\|\|y-z\|,
	\end{equation*}
	which completes the proof.
\end{proof}

\begin{proposition}\label{p:equi}
	Let $\{K_1, K_2, \dots, K_r\}$ be a family of sets in $\mathcal{H}$ and set
	\begin{align*} 
		C &:= K_1 \times K_2 \times \dots \times K_r \text{~and} \\
		D &:= \{(x_1, x_2, \dots, x_r) \in \mathcal{H}^r: x_1 = x_2 = \cdots = x_r\}.
	\end{align*}
	Then the following statements hold.
	\begin{enumerate}[label =(\alph*)]
		\item\label{p:equi_linreg} 
		$\{K_1, K_2, \dots, K_r\}$ is linearly regular around $x\in \mathcal{H}$ if and only if $\{C,D\}$ is linearly regular around $(x, x, \dots, x)\in \mathcal{H}^r$.
		\item\label{p:equi_strreg} 
		$\{K_1, K_2, \dots, K_r\}$ is strongly regular at $x\in \mathcal{H}$ if and only if $\{C,D\}$ is strongly regular at $(x, x, \dots, x)\in \mathcal{H}^r$.
	\end{enumerate}
\end{proposition}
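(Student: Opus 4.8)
The plan is to translate both equivalences into elementary identities relating distances (for part~\ref{p:equi_linreg}) and limiting normal cones (for part~\ref{p:equi_strreg}) between $\mathcal{H}^r$ and $\mathcal{H}$. Throughout write $\mathbf{x} := (x,x,\dots,x)\in\mathcal{H}^r$, let $K := \bigcap_{j=1}^r K_j$, and for $\mathbf{z}=(z_1,\dots,z_r)$ put $\bar z := \frac1r\sum_{j=1}^r z_j$, so that $P_D(\mathbf{z})=(\bar z,\dots,\bar z)$ by \eqref{proj:productD} and $C\cap D=\{(y,\dots,y):y\in K\}$. I would first record the distance formulas
\begin{equation*}
d_C(\mathbf{z})^2=\sum_{j=1}^r d_{K_j}(z_j)^2,\qquad d_D(\mathbf{z})^2=\sum_{j=1}^r\|z_j-\bar z\|^2,\qquad d_{C\cap D}\big((z,\dots,z)\big)=\sqrt{r}\,d_K(z),
\end{equation*}
together with the normal-cone identities $N_C(\mathbf{x})=N_{K_1}(x)\times\dots\times N_{K_r}(x)$ (as already used in the proof of Proposition~\ref{p:superregular}\ref{p:superregular_product}) and $N_D(\mathbf{x})=D^\perp=\{(v_1,\dots,v_r)\in\mathcal{H}^r:\sum_{j=1}^r v_j=0\}$, the latter because $D$ is a subspace.

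For part~\ref{p:equi_linreg}, the reverse implication is immediate: if $\{C,D\}$ is linearly regular around $\mathbf{x}$ with constants $\kappa,\delta$, then for $z\in\mathbb{B}(x;\delta/\sqrt{r})$ the point $\mathbf{z}:=(z,\dots,z)$ lies in $\mathbb{B}(\mathbf{x};\delta)\cap D$, so $d_D(\mathbf{z})=0$, $d_C(\mathbf{z})\le\sqrt{r}\max_j d_{K_j}(z)$, $d_{C\cap D}(\mathbf{z})=\sqrt{r}\,d_K(z)$, and linear regularity of $\{C,D\}$ gives $\sqrt{r}\,d_K(z)\le\kappa\sqrt{r}\max_j d_{K_j}(z)$. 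For the forward implication, suppose $\{K_1,\dots,K_r\}$ is linearly regular around $x$ with constants $\kappa,\delta$. For $\mathbf{z}\in\mathbb{B}(\mathbf{x};\delta)$ one has $\|z_j-x\|\le\|\mathbf{z}-\mathbf{x}\|\le\delta$ and $\|\bar z-x\|\le\|\mathbf{z}-\mathbf{x}\|\le\delta$, so the defining inequality of linear regularity of $\{K_j\}$ holds at $\bar z$. Writing $M:=\max\{d_C(\mathbf{z}),d_D(\mathbf{z})\}$ and using that each $d_{K_j}$ is $1$-Lipschitz,
\begin{equation*}
d_{K_j}(\bar z)\le d_{K_j}(z_j)+\|z_j-\bar z\|\le d_C(\mathbf{z})+d_D(\mathbf{z})\le 2M\quad\Longrightarrow\quad d_K(\bar z)\le 2\kappa M.
\end{equation*}
Choosing, for arbitrary $\eta>0$, a point $w\in K$ with $\|\bar z-w\|\le d_K(\bar z)+\eta$ and noting $(w,\dots,w)\in C\cap D$,
\begin{equation*}
d_{C\cap D}(\mathbf{z})^2\le\sum_{j=1}^r\|z_j-w\|^2\le 2\sum_{j=1}^r\|z_j-\bar z\|^2+2r\|\bar z-w\|^2\le 2M^2+2r\,(2\kappa M+\eta)^2,
\end{equation*}
and letting $\eta\downarrow 0$ yields $d_{C\cap D}(\mathbf{z})\le\sqrt{2+8r\kappa^2}\,M$, i.e.\ linear regularity of $\{C,D\}$ around $\mathbf{x}$.

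Part~\ref{p:equi_strreg} then follows by direct computation from the two normal-cone identities. Since $N_D(\mathbf{x})=D^\perp$ is a subspace, strong regularity of the pair $\{C,D\}$ at $\mathbf{x}$ reads $N_C(\mathbf{x})\cap D^\perp=\{0\}$, which by the product rule is exactly: whenever $u_j\in N_{K_j}(x)$ for all $j$ and $\sum_{j=1}^r u_j=0$, then $u_1=u_2=\dots=u_r=0$. This is precisely strong regularity of $\{K_1,K_2,\dots,K_r\}$ at $x$, so the two notions coincide. The main obstacle is the forward direction of part~\ref{p:equi_linreg}: one must keep track of which neighbourhood of $x$ the points $\bar z$ and $z_j$ fall into so that linear regularity of $\{K_j\}$ is genuinely applicable, and propagate the absolute constants carefully through the triangle inequalities; the remaining steps are substitutions into the identities of the first paragraph, and in particular part~\ref{p:equi_strreg} is essentially a reformulation once the product and subspace normal-cone formulas are in hand.
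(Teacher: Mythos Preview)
Your proposal is correct and follows essentially the same route as the paper: for part~\ref{p:equi_linreg} you project onto the diagonal (your $\bar z$ is the paper's $z = P_D(\mathbf{y})$-coordinate), apply linear regularity of $\{K_j\}$ at that projected point, and propagate back via triangle/Lipschitz inequalities; for part~\ref{p:equi_strreg} both you and the paper reduce directly to $N_C(\mathbf{x})\cap D^\perp=\{0\}$ using the product normal-cone formula and $N_D=D^\perp$. The only cosmetic difference is that the paper exploits the identity $d_{C\cap D}((z,\dots,z))=\sqrt{r}\,d_K(z)$ on the diagonal instead of choosing an $\eta$-approximate minimizer $w\in K$, yielding the constant $1+2\sqrt{r}\kappa$ rather than your $\sqrt{2+8r\kappa^2}$, but the strategy is the same.
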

\begin{proof}
	\ref{p:equi_linreg}: Set $K :=K_1\cap K_2\cap \dots \cap K_r$. We first have that, for all $\mathbf{z} =(z, z, \dots, z)\in D$, 
	\begin{equation}\label{e:dCD}
		d_{C\cap D}^2(\mathbf{z}) = \inf_{\mathbf{y} = (y, y, \dots, y)\in C\cap D} \|\mathbf{z}-\mathbf{y}\|^2 = r\inf_{y\in K} \|z-y\|^2 = rd_{K}^2(z)
	\end{equation}
	and, since $P_C(\mathbf{z}) =P_{K_1}(z) \times P_{K_2}(z) \times \dots \times P_{K_r}(z)$,
	\begin{equation}\label{e:dC}
		d_C^2(\mathbf{z}) = d_{K_1}^2(z) + d_{K_2}^2(z) + \dots + d_{K_r}^2(z).
	\end{equation}
	
	Assume that $\{K_1, K_2, \dots, K_r\}$ is linearly regular around $x\in \mathcal{H}$. Then there exist $\kappa\geq 0$ and $\delta >0$ such that, for all $z\in \mathbb{B}(x;\delta)$, we have
	\begin{equation}\label{e:dK}
		d_{K}(z) \leq \kappa\max\{d_{K_1}(z), d_{K_2}(z), \dots, d_{K_r}(z)\}.
	\end{equation} 
	Set $\mathbf{x} :=(x, x, \dots, x)\in \mathcal{H}^r$ and let $\mathbf{y}\in \mathbb{B}(\mathbf{x};\sqrt{r}\delta/2)$ and $\mathbf{z} =P_D(\mathbf{y})$. Noting that $\mathbf{x}\in D$, we have
	\begin{equation*}
		\|\mathbf{z}-\mathbf{x}\| \leq \|\mathbf{y}-\mathbf{z}\| + \|\mathbf{y}-\mathbf{x}\| \leq 2\|\mathbf{y}-\mathbf{x}\| \leq \sqrt{r}\delta.
	\end{equation*}
	Thus, $\mathbf{z} =(z, z, \dots, z)\in D$ with $z\in \mathbb{B}(x;\delta)$. It follows from \eqref{e:dCD}, \eqref{e:dC}, and \eqref{e:dK} that 
	\begin{equation*}
		d_{C\cap D}^2(\mathbf{z}) = rd_K^2(z) \leq r\kappa^2\max\{d_{K_1}^2(z), d_{K_2}^2(z), \dots, d_{K_r}^2(z)\} \leq r\kappa^2d_C^2(\mathbf{z}),
	\end{equation*}
	and so 
	\begin{equation*}
		d_{C\cap D}(\mathbf{z}) \leq \sqrt{r}\kappa d_C(\mathbf{z}) \leq \sqrt{r}\kappa(d_C(\mathbf{y}) + \|\mathbf{y}-\mathbf{z}\|) = \sqrt{r}\kappa(d_C(\mathbf{y}) + d_D(\mathbf{y})).
	\end{equation*}
	We deduce that 
	\begin{align*}
		d_{C\cap D}(\mathbf{y}) &\leq d_{C\cap D}(\mathbf{z}) + \|\mathbf{y}-\mathbf{z}\| = d_{C\cap D}(\mathbf{z}) + d_D(\mathbf{y}) \\
		&\leq \sqrt{r}\kappa d_C(\mathbf{y}) + (1+\sqrt{r}\kappa)d_D(\mathbf{y}) \\
		&\leq (1+2\sqrt{r}\kappa)\max\{d_C(\mathbf{y}), d_D(\mathbf{y})\},
	\end{align*}
	which implies the linear regularity of $\{C,D\}$ around $\mathbf{x}$.
	
	Conversely, assume that $\{C,D\}$ is linearly regular around $\mathbf{x} = (x, x, \dots, x)\in \mathcal{H}^r$, i.e., there exist $\kappa\geq 0$ and $\delta >0$ such that, for all $\mathbf{z}\in \mathbb{B}(\mathbf{x};\delta)$,
	\begin{equation*}
		d_{C\cap D}(\mathbf{z}) \leq \kappa\max\{d_C(\mathbf{z}), d_D(\mathbf{z})\}.
	\end{equation*}
	Let $z\in \mathbb{B}(x;\delta/\sqrt{r})$. Then $\mathbf{z} := (z, z, \dots, z)\in D\cap \mathbb{B}(\mathbf{x};\delta)$ and the above inequality implies $d_{C\cap D}(\mathbf{z}) \leq \kappa d_C(\mathbf{z})$. Thus, by using \eqref{e:dCD} and \eqref{e:dC}, we deduce linear regularity of $\{K_1, K_2, \dots, K_r\}$ around $x$.
	
	\ref{p:equi_strreg}: For all $\mathbf{x} = (x, x, \dots, x)\in D$, we have from \cite[Proposition~1.2]{mordukhovich} that 
	\begin{equation}\label{e:NC}
		N_C(\mathbf{x}) = N_{K_1}(x) \times N_{K_2}(x) \times \dots \times N_{K_r}(x)
	\end{equation}
	and from, e.g., \cite[Proposition~26.4(ii)]{bcombettes} that
	\begin{equation}\label{e:ND}
		N_D(\mathbf{x}) = \{(u_1, u_2, \dots, u_r)\in \mathcal{H}^r: u_1 + u_2 + \dots + u_r = 0\}.
	\end{equation}
	
	Assume that $\{K_1, K_2, \dots, K_r\}$ is strongly regular at $x\in \mathcal{H}$. Set $\mathbf{x} := (x, x, \dots, x)\in \mathcal{H}^r$ and let $\mathbf{u}\in N_C(\mathbf{x})\cap (-N_D(\mathbf{x}))$. In view of \eqref{e:NC}, we can write $\mathbf{u} = (u_1, u_2, \dots, u_r)$ with $u_j\in N_{K_j}(x)$. Since $\mathbf{u}\in -N_D(\mathbf{x})$, it follows from \eqref{e:ND} that $u_1 + u_2 + \dots + u_r = 0$. By the strong regularity of $\{K_1, K_2, \dots, K_r\}$, we have $u_1 = u_2 = \dots = u_r = 0$, and so $\mathbf{u} = 0$. Altogether, we have $N_C(\mathbf{x})\cap (-N_D(\mathbf{x})) = \{0\}$, and thus $\{C,D\}$ is strongly regular at $\mathbf{x}$.
	
	Conversely, assume that $\{C,D\}$ is strongly regular at $\mathbf{x} = (x, x, \dots, x)\in \mathcal{H}^r$ and assume that 
	\begin{equation*}
		u_1 + u_2 + \dots + u_r = 0 \text{~~with~~} u_j\in N_{K_j}(x). 
	\end{equation*}
	Then $\mathbf{u} := (u_1, u_2, \dots, u_r)\in N_C(\mathbf{x})$ due to \eqref{e:NC} and, in turn, \eqref{e:ND} implies that $\mathbf{u}\in -N_D(\mathbf{x})$, so $\mathbf{u}\in N_C(\mathbf{x})\cap (-N_D(\mathbf{x})) = \{0\}$. We therefore have that $u_1 = u_2 = \dots = u_r = 0$, which proves the strong regularity of $\{K_1, K_2, \dots, K_r\}$ at $x$.
\end{proof}

Recall that a sequence $(x_n)_{n\in \mathbb{N}}$ is said to \emph{converge $R$-linearly} to a point $x^\ast$ if there exist $\rho\in [0,1)$ and $\sigma >0$ such that, for all $n\in \mathbb{N}$,
\begin{equation*}
	\|x_n-x^\ast\| \leq \sigma\rho^n.
\end{equation*}

\begin{theorem}\label{t:noncvx}
	Let $K_1, K_2, \dots, K_{r-1}$ be closed subsets and $K_r$ be a closed affine subspace of $\mathcal{H}$ such that $\bigcap_{j=1}^r K_j \neq \varnothing$ and $P_{K_{r-1}}(K_r) \subseteq K_r$. Suppose that $K_1, \dots, K_{r-2}$, and $K_{r-1}\cap K_r$ are superregular at a point $\overline{x} \in \bigcap_{j=1}^r K_j$. Set $\overline{\mathbf{x}} := (\overline{x}, \overline{x}, \dots, \overline{x})\in \mathcal{H}^{r-1}$. Then the following statements hold.
	\begin{enumerate}[label =(\alph*)]
		\item\label{t:noncvx_MAP} 
		If $\{K_1, \dots, K_{r-2}, K_{r-1}\cap K_r\}$ is linearly regular around $\overline{x}$, then, whenever the starting point is sufficiently close to $\overline{\mathbf{x}}$, the sequence $(\mathbf{x}_n)_{n\in \mathbb{N}}$ generated by $\mathbf{x}_{n+1} \in S(\mathbf{x}_n)$ converges $R$-linearly to a point $\mathbf{x}^\ast = (x^\ast, x^\ast, \dots, x^\ast)\in V\cap W$ with $x^\ast\in \bigcap_{j=1}^r K_j$.
		\item\label{t:noncvx_DP} 
		If $\{K_1, \dots, K_{r-2}, K_{r-1}\cap K_r\}$ is strongly regular at $\overline{x}$, then, whenever the starting point is sufficiently close to $\overline{\mathbf{x}}$, the sequence $(\mathbf{x}_n)_{n\in \mathbb{N}}$ generated by $\mathbf{x}_{n+1} \in T(\mathbf{x}_n)$ converges $R$-linearly to a point $\mathbf{x}^\ast = (x^\ast, x^\ast, \dots, x^\ast)\in V\cap W$ with $x^\ast\in \bigcap_{j=1}^r K_j$.
	\end{enumerate}
\end{theorem}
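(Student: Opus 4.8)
The plan is to reduce both parts to the classical \emph{two-set} situation. The key observation is that, under the standing hypotheses that $K_r$ is a closed affine subspace and $P_{K_{r-1}}(K_r)\subseteq K_r$, Theorem~\ref{t:main} gives $P_{K_{r-1}}P_{K_r}=P_{K_{r-1}\cap K_r}$; in particular $K_{r-1}\cap K_r$ is proximinal and, by \eqref{proj:productU}, the operator $Q_V$ is genuinely the projector $P_V$ onto $V$. Hence $S=P_WP_V$ is exactly the method of alternating projections for the pair $\{V,W\}\subseteq\mathcal{H}^{r-1}$, and, by Theorem~\ref{t:crDR}\ref{t:crDR_DR}, $T=(\I+R_VR_W)/2$ is exactly the Douglas--Rachford operator for $V$ and $W$. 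So it suffices to establish local $R$-linear convergence of MAP (resp.\ DR) for $\{V,W\}$ near the feasible tuple $\overline{\mathbf{x}}$, and then to read the conclusion back through Lemma~\ref{l:newproductspaceequivalence}, which identifies $V\cap W$ with the diagonal tuples whose common entry lies in $\bigcap_{j=1}^r K_j$.

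To invoke the known nonconvex local convergence theorems I need superregularity of the individual sets and the appropriate regularity of the collection $\{V,W\}$. The set $W$ is a closed subspace, hence superregular at each of its points. The coordinates of $\overline{\mathbf{x}}$ are $\overline{x}\in K_1,\dots,\overline{x}\in K_{r-2}$ and $\overline{x}\in K_{r-1}\cap K_r$, and each of $K_1,\dots,K_{r-2},K_{r-1}\cap K_r$ is superregular there by assumption, so Proposition~\ref{p:superregular}\ref{p:superregular_product} shows that $V=K_1\times\cdots\times K_{r-2}\times(K_{r-1}\cap K_r)$ is superregular at $\overline{\mathbf{x}}$. Next, applying Proposition~\ref{p:equi} to the $(r-1)$-member family $\{K_1,\dots,K_{r-2},K_{r-1}\cap K_r\}$, its linear regularity around $\overline{x}$ is equivalent to linear regularity of $\{V,W\}$ around $\overline{\mathbf{x}}$ (part~\ref{p:equi_linreg}), and its strong regularity at $\overline{x}$ is equivalent to strong regularity of $\{V,W\}$ at $\overline{\mathbf{x}}$ (part~\ref{p:equi_strreg}); since the limiting normal cone is outer semicontinuous, the latter persists at all points near $\overline{\mathbf{x}}$.

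For \ref{t:noncvx_MAP} I would then quote the local $R$-linear convergence theorem for alternating projections between a superregular set and a second set, under linear regularity of the pair at the reference point (see the results discussed around \cite{llmalick,bbauschke}): there is a neighbourhood of $\overline{\mathbf{x}}$ from which every sequence with $\mathbf{x}_{n+1}\in S(\mathbf{x}_n)$ converges $R$-linearly to a point of $V\cap W$, which by Lemma~\ref{l:newproductspaceequivalence} has the form $(x^\ast,\dots,x^\ast)$ with $x^\ast\in\bigcap_{j=1}^r K_j$. For \ref{t:noncvx_DP} I would quote the corresponding local $R$-linear convergence theorem for Douglas--Rachford when the collection is strongly regular at the reference point and one of the sets (here $W$) is affine (see \cite{dphan1,dphan2}): there is a neighbourhood of $\overline{\mathbf{x}}$ from which every sequence with $\mathbf{x}_{n+1}\in T(\mathbf{x}_n)$ converges $R$-linearly to a fixed point $\mathbf{x}^\ast$ of $T$. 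One short extra step then pins down the limit: writing $\mathbf{x}^\ast\in R_V(R_W(\mathbf{x}^\ast))$ gives $P_W(\mathbf{x}^\ast)\in P_V(R_W(\mathbf{x}^\ast))$, hence $R_W(\mathbf{x}^\ast)-P_W(\mathbf{x}^\ast)\in N_V(P_W(\mathbf{x}^\ast))$, while $\mathbf{x}^\ast-P_W(\mathbf{x}^\ast)\in N_W(P_W(\mathbf{x}^\ast))$ since $W$ is a subspace; as $R_W(\mathbf{x}^\ast)-P_W(\mathbf{x}^\ast)=-(\mathbf{x}^\ast-P_W(\mathbf{x}^\ast))$, strong regularity of $\{V,W\}$ at the nearby point $P_W(\mathbf{x}^\ast)$ forces $\mathbf{x}^\ast-P_W(\mathbf{x}^\ast)\in N_W(P_W(\mathbf{x}^\ast))\cap(-N_V(P_W(\mathbf{x}^\ast)))=\{0\}$, so $\mathbf{x}^\ast=P_W(\mathbf{x}^\ast)\in V\cap W$, and Lemma~\ref{l:newproductspaceequivalence} again gives the stated form (cf.\ Theorem~\ref{t:crDR}\ref{t:crDR_fixedpoints}).

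The main obstacle is not computational: it is to identify precisely which nonconvex local-linear-convergence result to invoke for each operator and to check that its hypotheses match exactly what the reductions above supply (one superregular set plus linear regularity for MAP; strong regularity, one affine set and one superregular set for DR). A secondary technical point, specific to DR, is the final argument that the limiting fixed point lies in $V\cap W$, which genuinely uses affineness of $W$, strong regularity near $\overline{\mathbf{x}}$, and the normal-cone description of best approximations; everything else---the identity $Q_V=P_V$, the product superregularity, and the equivalence of the regularity conditions---is handed to us verbatim by Theorem~\ref{t:main}, Proposition~\ref{p:superregular}, and Proposition~\ref{p:equi}.
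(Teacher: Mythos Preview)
Your proposal is correct and follows essentially the same route as the paper: reduce to the two-set problem via Theorem~\ref{t:main} (so that $Q_V=P_V$), transfer superregularity to $V$ by Proposition~\ref{p:superregular}\ref{p:superregular_product}, transfer linear/strong regularity to $\{V,W\}$ by Proposition~\ref{p:equi}, and then invoke an external local $R$-linear convergence result for MAP and DR respectively. The paper cites \cite[Corollary~5.12(i)]{dphan1} (with specific parameter choices) for both parts; that result already delivers convergence to a point of $V\cap W$, so your additional normal-cone argument showing $\mathbf{x}^\ast\in V\cap W$ in the DR case is sound but unnecessary once the precise citation is in place.
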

\begin{proof}
	We first derive from Lemma~\ref{l:newproductspaceequivalence} that $\overline{\mathbf{x}} \in V\cap W$ and from Proposition~\ref{p:superregular}\ref{p:superregular_product} that $V$ is superregular at $\overline{\mathbf{x}}$. Since $P_{K_{r-1}}(K_r) \subseteq K_r$ and $K_r$ is a closed affine subspace, Theorem~\ref{t:main} implies that $P_{K_{r-1}}P_{K_r} = P_{K_{r-1}\cap K_r}$. In turn, $Q_V = P_V$.
	
	\ref{t:noncvx_MAP}: We have $S = P_WP_V$ and, by Proposition~\ref{p:equi}\ref{p:equi_linreg}, $\{V,W\}$ is linearly regular around $\overline{\mathbf{x}}$. Now, since $V$ is superregular at $\overline{\mathbf{x}}$ and $W$ is convex, applying \cite[Corollary~5.12(i)(b)]{dphan1} with $\lambda = \mu = \alpha = 1$, we get the conclusion.
	
	\ref{t:noncvx_DP}: According to Theorem~\ref{t:crDR}\ref{t:crDR_DR}, $T = (\I+R_VR_W)/2$. By Proposition~\ref{p:equi}\ref{p:equi_strreg}, $\{V,W\}$ is strongly regular at $\overline{\mathbf{x}}$. Noting that $V$ is superregular at $\overline{\mathbf{x}}$ and $W$ is convex, and using \cite[Corollary~5.12(i)(a)]{dphan1} with $\lambda = \mu = 2$ and $\alpha = 1/2$ (see also \cite[Theorem~4.3]{phan}), we complete the proof.
\end{proof}

\section{Application: Wavelet Construction}\label{sec:wavelets}

A \emph{wavelet} $\psi$ on the line is a function whose dyadic dilation and integer translations form an orthonormal basis for $L^2(\mathbb{R}, \mathbb{C})$. The utility of wavelets in analyzing and synthesizing signals relies on certain wavelet properties like\emph{ compact support} and \emph{regularity}. The earliest examples of compactly supported smooth wavelets with orthonormal shifts were first achieved by Daubechies \cite{daubechies} through the \emph{multiresolution analysis} (MRA) introduced by Mallat and Meyer \cite{mallat,meyer}. The methods employed by Daubechies are heavily reliant on complex analysis techniques that are not readily extendable to higher dimensions.

Recently, wavelet construction has been formulated as a feasibility problem \cite{franklin,fhtam,fhtamfull}. The product space DR and MAP, along with other projection algorithms, have been successfully employed to solve the wavelet feasibility problem. The product DR was observed to yield both already known and unseen examples of wavelets on the line consistently. This approach has also been extended to produce nonseparable wavelets on the plane which required a higher number of constraint sets. In certain applications in signal and image processing, the efficiency of these wavelets requires additional properties including \emph{real-valuedness}, \emph{symmetry}, and \emph{cardinality} \cite{dhlakey}.  Unfortunately, the inclusion of more constraints also requires additional product space dimensions. As the number of constraints gets large, the size of formulation becomes computationally intractable. It is on this ground that we want to evade an additional dimension by exploiting the property in \eqref{def:generalcondition} whenever it is viable.

We also remark that there are theoretical obstructions to obtain wavelets with the desired properties. Except for the case of Haar wavelet, there exists no symmetric, real-valued wavelets with orthonormal shifts, and compact support \cite{daubechies}. However, if we remove the real-valuedness condition, we may be able to obtain complex-valued scaling function and wavelet with perfect symmetry properties. Similarly, there exist no continuous, cardinal wavelets with compact support, and orthogonal shifts \cite{xia}. These theoretical obstructions may also be circumvented, without completely ruling out the desirable benefits of perfect symmetry or cardinality, by seeking for near-symmetry or near-cardinality \cite{dhlakey}.

In this section, we recall the wavelet feasibility problem and verify that a pair of its constraint sets satisfy \eqref{def:generalcondition}. For purposes of illustration, we set up feasibility problems for constructing real-valued smooth orthogonal wavelets, and for symmetric smooth orthogonal wavelets.  We use the constraint-reduced DR and MAP to solve the feasibility problems.

\subsection{The Wavelet Construction Problem}
Wavelet orthonormal bases are constructed by finding a scaling function--wavelet pair $(\phi, \psi)$, where $\phi$ comes from an MRA. This construction reduces to finding a matrix-valued function $U(\xi): \mathbb{R} \to \mathbb{C}^{2\times 2}$ of the form
\begin{equation*}
	U(\xi )=\left[\begin{matrix}m_0(\xi )&m_1(\xi )\\
		m_0(\xi +1/2)&m_1(\xi +1/2)\end{matrix}\right]\label{def:U}
\end{equation*}
where $m_0$ and $m_1$ are trigonometric series called \emph{filters} associated to the scaling function $\phi$ and wavelet $\psi$, respectively. Finding the coefficients of these filters is key to constructing a $(\phi, \psi)$ pair.

\subsubsection*{MRA Conditions and Design criteria}

A \emph{consistency condition} arises from the definition of $U(\xi)$, that is, $U(\xi +1/2)=\sigma U(\xi )$ where $\sigma$ is the ``row swap'' matrix. Additionally, a necessary condition for the orthonormality of the shifts and dilates of $\psi$ is that $m_0(0)=1$ and $U(\xi)$ is \emph{unitary almost everywhere}. For $\phi$ and $\psi$ to be compactly supported on $[0,M-1]$ for an even $M\geq 4$, we seek to impose that $m_0$ and $m_1$ be trigonometric polynomials of the form $m_0(\xi )=\sum_{k=0}^{M-1}h_ke^{2\pi ik\xi}$ and $m_1(\xi )=\sum_{k=0}^{M-1}g_ke^{2\pi ik\xi}$. Consequently, $U(\xi )=\sum_{k=0}^{M-1}A_ke^{2\pi ik\xi}$ with each $A_k\in{\mathbb C}^{2\times 2}$. The regularity criterion can be achieved by forcing $\frac{d^{\ell}}{d\xi^{\ell}}U(0)$ to be diagonal for all $0< \ell \leq D$, for some fixed $0<D\leq \frac{M-2}{2}$. Here, a higher value of $D$ would mean more regularity for the wavelet. To ensure that we obtain real-valued scaling and wavelet functions, we require $U(\xi)=\overline{U(-\xi)}$. Finally, if $U(\xi)^{\dagger}$ denotes a copy of $U(\xi)$ with negated off-diagonal entries and $\phi$ is symmetric about the center of support, then $U(\xi) = e^{2\pi i (M-1)\xi}U(\xi)^{\dagger}$.

\subsubsection*{Discretisation by Uniform Sampling}

The compact support condition allows us to write $U(\xi)$ as a matrix-valued trigonometric polynomial of degree $M-1$. And because a trigonometric polynomial of degree $M-1$ is determined by $M$ distinct points, we discretise $U(\xi)$ by a uniform sampling at $M$ points $\{\frac{j}{M}\}_{j=0}^{M-1} \subseteq [0,1)$. If $U_j = U(\frac{j}{M})$, then the sampling procedure produces an \emph{ensemble} $\mathcal{U}=(U_0,U_1,\dots, U_{M-1}) \in (\mathbb{C}^{2\times 2})^M$  of matrices. Moreover, the coefficient matrices $A_k$ may be obtained from the ensembles by an $M$-point discrete Fourier transform, that is, 
\begin{equation}
	A_k=({\mathcal F}_M{\mathcal U})_k=\dfrac{1}{M}\sum_{j=0}^{M-1}U_je^{-2\pi ijk/M},\label{A_k}
\end{equation}
which is also invertible to recover back 
$U_j=({\mathcal F}_M^{-1}{\mathcal A})_j$.
This establishes a connection between the uniform samples and the coefficient matrices $A_k$ of $U(\xi)$.

\subsubsection*{Wavelet Properties Encoded on the Ensembles}

The consistency condition is imposed on the ensemble of samples to satisfy $U_{j +\frac{M}{2}} = \sigma U_j$ for all $j \in \{0,1,\dots, M-1\}$. On the other hand, unitarity of each sample $U_j=U(\frac{j}{M})$ for $j \in \{0,1,\dots,M-1\}$ is insufficient to ensure the unitarity of $U(\xi )$ almost everywhere. However, it transpires that forcing $U(\xi)$ to be unitary at $2M$ samples, uniformly chosen to be $U(\frac{j}{2M})$ and $U(\frac{2j+1}{2M})$, for $j \in \{0,1,\dots,M-1\}$, is sufficient for $U(\xi)$ to be unitary almost everywhere. Incidentally, given $\mathcal{U} = (U(\frac{j}{M}))_{j=0}^{M-1}$, the other $M$ samples written to form an ensemble $\tilde{\mathcal{U}}$ may be obtained from $\mathcal{U}$ using $\tilde{\mathcal U}={\mathcal F}_M^{-1}\chi_M{\mathcal F}_M({\mathcal U})$, where $(\chi_M)_j=e^{\pi ij/M}$ for $j=\{0,1,\dots, M-1\}$. In terms of the sample matrices $U_j$, the regularity condition is imposed by forcing $\sum_{j=0}^{M-1}j^\ell A_j$ to be diagonal, where 
\begin{equation*}
	\sum_{j=0}^{M-1}j^\ell A_j=\frac{1}{M}\sum_{k =0}^{M-1}\alpha_{\ell k}U_k 
	\text{~~and~~}
	\alpha_{\ell k}=\dfrac{1}{M}\sum_{j=0}^{M-1}j^\ell e^{-2\pi ik j/M}.
\end{equation*}
For real-valuedness, the ensembles must satisfy $U_j=\overline{U_{M-j}}$ for $j \in \{1,2,\dots, \frac{M}{2}\}$. Lastly, we require $U_j = e^{2\pi i (M-1)j/M}U^{\dagger}_{M-j}$ for all $j \in \{1,2,\dots, \frac{M}{2}\}$ to meet the symmetry condition.

\subsubsection*{Wavelet Construction as a Feasibility Problem}

Let $(\mathbb{C}^{2\times 2})_{\sigma}^M$ denote the collection of ensembles in $(\mathbb{C}^{2\times 2})^M$ that satisfy the consistency condition. Further, let $\mathbb{U}(2)$ denote the collection of all $2$-by-$2$ unitary matrices. For an even $M\geq 4$ and a fixed $0<D\leq \frac{M-2}{2}$, we define $C_1, C_2,C_3, C_4^{(R)},C_4^{(S)} \subseteq ({\mathbb C}^{2\times 2})_\sigma^M$ as follows.	
\begin{subequations}\label{eq:waveletconstraints}
	\begin{align}
		C_1 &:= \left\{\mathcal{U}:\, U_0=\begin{bmatrix}1&0\\0&z\\ \end{bmatrix},\,|z|=1,\, U_j\in {\mathbb U}(2),\,j\in \{0,1,\dots, M/2\}\right\},  \\
		C_2 &:= \left\{\mathcal{U}:\, ({\mathcal F}_M\chi_M({\mathcal F}_M)^{-1}(\mathcal{U}))_j\in {\mathbb U}(2),\, j\in \{0,1,\dots, M/2\}\right\},  \\
		C_3 &:= \left\{\mathcal{U}:\, \sum_{k=0}^{M-1}\alpha _{\ell k}U_k\in\text{diag\,}({\mathbb C}^{2\times 2}),\ 1\leq\ell\leq D\right\},\\
		C_4^{(R)} &:= \left\{\mathcal{U}: U_j = \overline{U_{M-j}}, \, j \in \{1,2,\dots, M/2\} \right\},\\
		C_4^{(S)}&:= \left\{\mathcal{U}: U_j = e^{2\pi i (M-1)j/M}U^{\dagger}_{M-j}, \, j \in \{1,2,\dots, M/2\} \right\}.
	\end{align}
\end{subequations}

\begin{problem}[Symmetric wavelets]\label{prob:waveletproblemsymm}
	The problem to construct symmetric smooth orthogonal wavelet is to find an ensemble $\ \mathcal{U}=(U_0,\dots,U_{M-1})\in \bigcap_{k=1}^3 C_k \cap C_4^{(S)} \subseteq ({\mathbb C}^{2\times 2})_\sigma^M$.
\end{problem}

\begin{problem}[Real-valued wavelets]\label{prob:waveletproblemreal}
	The problem to construct real-valued smooth orthogonal wavelet is to find an ensemble $\ \mathcal{U}=(U_0,\dots,U_{M-1})\in \bigcap_{k=1}^3 C_k \cap C_4^{(R)} \subseteq ({\mathbb C}^{2\times 2})_\sigma^M$.
\end{problem}

Note that before the constraint sets are defined, the parameters $M$ and $D$ must be chosen first. A particular combination of values of $M$ and $D$ corresponds to a specific case of Problem~\ref{prob:waveletproblemsymm} or Problem~\ref{prob:waveletproblemreal}. We also remark that $C_1$ and $C_2$ are nonconvex subsets of $({\mathbb C}^{2\times 2})_\sigma^M$, and every ensemble in both $C_1$ and $C_2$ will satisfy the unitarity condition. The subspaces $C_3$, $C_4^{(R)}$, and $C_4^{(S)}$ are constraint sets for regularity, real-valuedness, and symmetry, respectively. The projectors onto $C_1$, $C_2$, and $C_3$ are computed in \cite[Section~6.3]{franklin} and those onto $C_4^{(R)}$ and $C_4^{(S)}$ are referred to \cite[Section~3]{dhlakey}. We will show that $C_4^{(R)}$ and $C_4^{(S)}$ are both invariant under the projector onto $C_1$ which we recall in the next proposition. 

\begin{proposition}\label{proj:C1}
	Let $\mathcal{U} = (U_0,U_1,\dots, U_{M-1}) \in (\mathbb{C}^{2\times 2})_{\sigma}^M$ and $\tilde{\mathcal{U}}:=\{\tilde{U}_0,\tilde{U_1},\dots, \tilde{U}_{M-1}\} \in P_{C_1}(\mathcal{U})$. Suppose further that $z$ is the $(2,2)$-entry of $U_0$ and that $U_j = X_j \Sigma_j Y_j^{\ast}$ is a singular value decomposition for $U_j$ where $j\in \left\{1,2,\dots,M-1\right\}\backslash\left\{\frac{M}{2}\right\}$. Then 
	\begin{align*}
		\tilde{U}_0 &= \begin{bmatrix}
			1 & 0 \\ 
			0 & \frac{z}{|z|}  \\
		\end{bmatrix},\quad 
		\tilde{U}_{\frac{M}{2}} = \sigma \tilde{U}_0, 
		\text{~~and~~}\tilde{U}_j =X_jY_j^{\ast}
	\end{align*}
	for $j \in \left\{1,2,\dots,M-1\right\}\backslash \left\{\frac{M}{2}\right\}$.
\end{proposition}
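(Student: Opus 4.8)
The plan is to compute $P_{C_1}(\mathcal{U})$ directly from the structure of $C_1$, exploiting the fact that $C_1$ imposes \emph{independent} constraints on the individual sample matrices $U_0, U_1, \dots, U_{M/2}$ (subject to the overarching consistency condition $\mathcal{U} \in (\mathbb{C}^{2\times 2})_\sigma^M$, which pins down the remaining samples $U_{M/2+1}, \dots, U_{M-1}$ via $U_{j+M/2} = \sigma U_j$). Because the Frobenius norm on ensembles decomposes as a sum $\|\mathcal{U} - \mathcal{V}\|^2 = \sum_j \|U_j - V_j\|^2$ and the constraint on each coordinate is separable, the projection can be computed coordinate-by-coordinate: for $j \in \{1, \dots, M-1\}\setminus\{M/2\}$ we project $U_j$ onto $\mathbb{U}(2)$, for $j=0$ we project onto the set of matrices of the prescribed form $\begin{bmatrix}1&0\\0&z\end{bmatrix}$ with $|z|=1$, and then $U_{M/2} = \sigma U_0$ and $U_{j+M/2}=\sigma U_j$ are forced (here one uses that $\sigma$ is a permutation matrix, hence an isometry, so it does not affect the optimization — and that the consistency-paired coordinates $j$ and $j+M/2$ contribute the \emph{same} value $\|U_j - V_j\|^2 = \|\sigma U_j - \sigma V_j\|^2$, effectively just doubling the weight without changing the minimizer).

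The two substantive computations are the following. First, the nearest unitary matrix: I would invoke the standard fact that if $U_j = X_j \Sigma_j Y_j^\ast$ is a singular value decomposition, then $X_j Y_j^\ast$ is a (in general the, when $U_j$ is invertible) closest unitary matrix to $U_j$ in Frobenius norm — this is the classical solution of the orthogonal Procrustes problem, and the minimal value is $\sum_i (\sigma_i - 1)^2$ where $\sigma_i$ are the singular values. Second, the nearest matrix of the form $\begin{bmatrix}1&0\\0&z\end{bmatrix}$, $|z|=1$, to $U_0 = \begin{bmatrix}a&b\\c&z_0\end{bmatrix}$: expanding $\|U_0 - \mathrm{diag}(1,z)\|^2 = |a-1|^2 + |b|^2 + |c|^2 + |z_0 - z|^2$, only the last term depends on $z$, and minimizing $|z_0 - z|^2$ over $|z|=1$ gives $z = z_0/|z_0|$ (the radial projection onto the unit circle), valid when $z_0 \neq 0$. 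Assembling these three pieces — $\tilde U_0 = \mathrm{diag}(1, z/|z|)$, $\tilde U_{M/2} = \sigma \tilde U_0$, and $\tilde U_j = X_j Y_j^\ast$ otherwise — gives exactly the claimed formula.

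The main obstacle, such as it is, is bookkeeping rather than conceptual: one must carefully track how the consistency condition $\mathcal{U} \in (\mathbb{C}^{2\times 2})_\sigma^M$ interacts with the constraint indexing. The constraint set $C_1$ only explicitly mentions $U_j$ for $j \in \{0, 1, \dots, M/2\}$, but the ambient space already encodes $U_{j+M/2} = \sigma U_j$, so when writing the objective $\|\mathcal{U} - \mathcal{V}\|^2$ one must group the terms in conjugate pairs $\{j, j+M/2\}$ and observe that $\|U_j - V_j\|^2 + \|U_{j+M/2} - V_{j+M/2}\|^2 = 2\|U_j - V_j\|^2$ since $\sigma$ is orthogonal; this doubling is a common positive factor across all admissible $\mathcal{V}$ and hence irrelevant to the argmin. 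A minor technical caveat worth a remark: the SVD-based nearest-unitary matrix and the radial projection $z_0/|z_0|$ are only unique when $U_j$ is invertible and $z_0 \neq 0$ respectively; the proposition implicitly works on the generic stratum where $P_{C_1}$ is effectively single-valued, and I would either state this hypothesis or phrase the conclusion as identifying \emph{an} element of $P_{C_1}(\mathcal{U})$, matching the ``$\tilde{\mathcal{U}} \in P_{C_1}(\mathcal{U})$'' in the statement.
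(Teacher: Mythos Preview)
Your approach is correct and complete. Note, however, that the paper does not actually supply its own proof of this proposition: it simply cites \cite[Lemma~6.3.4]{franklin}. Your argument---separating the projection coordinate-wise via the Frobenius norm decomposition, handling the consistency pairing by the isometry of $\sigma$, and then invoking the orthogonal Procrustes solution for the nearest unitary and the radial projection $z\mapsto z/|z|$ for the $(2,2)$-entry---is exactly the standard derivation and is almost certainly what the cited reference contains. Your remark on non-uniqueness (when $U_j$ is singular or $z=0$) is a useful caveat that the paper and the statement leave implicit by writing $\tilde{\mathcal{U}}\in P_{C_1}(\mathcal{U})$.
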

\begin{proof}
	See \cite[Lemma~6.3.4]{franklin}.
\end{proof}

We emphasize that the ensembles in $P_{C_1}(\mathcal{U})$ do satisfy the consistency condition \cite[Lemma~6.3.6]{franklin}. We now verify two important relations among the constraint sets. These relations give us appropriate pairs of constraint sets for applying the constraint reduction reformulation to the wavelet feasibility problem. 

\begin{theorem}\label{t:PC1}
	Let $C_1$, $C_4^{(R)}$, and $C_4^{(S)}$ be as defined in \eqref{eq:waveletconstraints}. Then the following statements hold.
	\begin{enumerate}[label =(\alph*)]
		\item\label{t:PC1_C4R} 
		$P_{C_1}\left(C_4^{(R)}\right) \subseteq C_4^{(R)}$.
		\item\label{t:PC1_C4S} 
		$P_{C_1}\left(C_4^{(S)}\right) \subseteq C_4^{(S)}$.
	\end{enumerate}
\end{theorem}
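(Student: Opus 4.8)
The strategy is to take an arbitrary ensemble $\mathcal{U} = (U_0, \dots, U_{M-1})$ in the invariant set ($C_4^{(R)}$ or $C_4^{(S)}$), pick any $\tilde{\mathcal{U}} = (\tilde U_0, \dots, \tilde U_{M-1}) \in P_{C_1}(\mathcal{U})$ as described by Proposition~\ref{proj:C1}, and verify directly that $\tilde{\mathcal{U}}$ again satisfies the defining relation of the set. Since membership in $C_4^{(R)}$ (resp. $C_4^{(S)}$) is characterized by a pointwise identity linking $\tilde U_j$ and $\tilde U_{M-j}$ for $j \in \{1, \dots, M/2\}$, it suffices to check that identity index-by-index. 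The indices split naturally into three cases matching the three formulas in Proposition~\ref{proj:C1}: the index $j=0$ (paired with itself, since $U_0 = \overline{U_0}$ forces $z \in \mathbb{R}$ with $|z|$ arbitrary in the real case, and similarly $U_0 = e^{2\pi i(M-1)\cdot 0/M} U_0^{\dagger} = U_0^{\dagger}$ forces $U_0$ diagonal in the symmetric case, which already has the required structure), the index $j = M/2$ (which is determined from $\tilde U_0$ via $\tilde U_{M/2} = \sigma \tilde U_0$, and $M/2$ is paired with $M - M/2 = M/2$), and the generic indices $j \in \{1,\dots,M-1\}\setminus\{M/2\}$ where $\tilde U_j = X_j Y_j^\ast$ comes from an SVD $U_j = X_j \Sigma_j Y_j^\ast$.

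For part~\ref{t:PC1_C4R}: the hypothesis $\mathcal{U} \in C_4^{(R)}$ gives $U_{M-j} = \overline{U_j}$, so an SVD of $U_j$ yields an SVD of $U_{M-j}$ by conjugation, namely $U_{M-j} = \overline{X_j}\,\Sigma_j\,\overline{Y_j}^{\,\ast} = \overline{X_j}\,\overline{\Sigma_j}\,\overline{Y_j}^{\,\ast}$ (using that $\Sigma_j$ is real). Hence one may take $X_{M-j} = \overline{X_j}$ and $Y_{M-j} = \overline{Y_j}$, giving $\tilde U_{M-j} = X_{M-j} Y_{M-j}^\ast = \overline{X_j}\,\overline{Y_j}^{\,\ast} = \overline{X_j Y_j^\ast} = \overline{\tilde U_j}$, which is exactly the $C_4^{(R)}$ relation at index $j$. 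The one subtlety is that the SVD is not unique, so I need to phrase this as: the projector assigns $\tilde U_j$ independently of the chosen SVD (the quantity $X_j Y_j^\ast$ is the unitary factor in the polar decomposition of $U_j$ when $U_j$ is invertible, and more generally is the unique closest unitary — this is already implicit in Proposition~\ref{proj:C1} giving a well-defined projection), so I may simply compute with any convenient SVD. For part~\ref{t:PC1_C4S}: the hypothesis gives $U_{M-j} = e^{-2\pi i (M-1)j/M} U_j^\dagger$ (solving the defining relation), and I need to track how the dagger operation (negating off-diagonal entries, i.e. conjugation by $\mathrm{diag}(1,-1)$) interacts with the SVD; writing $\Delta = \mathrm{diag}(1,-1)$, we have $U_j^\dagger = \Delta U_j \Delta$, so $U_{M-j} = e^{-2\pi i(M-1)j/M}\Delta X_j \Sigma_j Y_j^\ast \Delta = (\Delta X_j)\Sigma_j(\Delta Y_j)^\ast$ up to the unimodular scalar, and since multiplying a matrix by a unimodular constant multiplies its closest-unitary by the same constant, $\tilde U_{M-j} = e^{-2\pi i(M-1)j/M}\Delta X_j Y_j^\ast \Delta = e^{-2\pi i(M-1)j/M}\tilde U_j^\dagger$, which rearranges to the $C_4^{(S)}$ relation $\tilde U_j = e^{2\pi i(M-1)j/M}\tilde U_{M-j}^\dagger$.

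The main obstacle I anticipate is the bookkeeping around non-uniqueness of the SVD and making rigorous the claim that $\mathcal{U} \mapsto X_j Y_j^\ast$ is a genuine (choice-independent) assignment, together with the scalar-factor behavior: I will want to state cleanly the fact that for any invertible (or even arbitrary) matrix $A$ and unimodular $\lambda$, the set of closest unitary matrices to $\lambda A$ equals $\lambda$ times the set of closest unitary matrices to $A$, and likewise the closest-unitary set to $\overline{A}$ is $\overline{(\,\cdot\,)}$ of that to $A$, and to $\Delta A \Delta$ is $\Delta(\,\cdot\,)\Delta$. These are elementary from the variational definition $\operatorname{argmin}_{V \in \mathbb{U}(2)}\|\lambda A - V\|$, since the Frobenius (or operator) norm is invariant under multiplication by unimodular scalars, by complex conjugation, and by the unitary $\Delta$. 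Once that lemma-level observation is in place, the three index cases are short direct verifications, and the $j=0$ and $j=M/2$ cases are essentially immediate from the explicit formulas for $\tilde U_0$ and $\tilde U_{M/2}$ in Proposition~\ref{proj:C1} together with the (real, resp. diagonal) structure that the hypothesis forces on $U_0$.
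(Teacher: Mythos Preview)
Your proposal is correct and follows essentially the same route as the paper: split into the indices $j=M/2$, the generic $j\in\{1,\dots,M/2-1\}$, and handle $\tilde U_0,\tilde U_{M/2}$ directly from Proposition~\ref{proj:C1}, then for the generic case transport an SVD of $U_j$ to one of $U_{M-j}$ via conjugation (part~\ref{t:PC1_C4R}) or via conjugation by $\mathrm{diag}(\pm1,\mp1)$ and a unimodular scalar (part~\ref{t:PC1_C4S}). Your added remark on SVD non-uniqueness---recasting $X_jY_j^\ast$ as the unique nearest unitary and noting invariance of the Frobenius minimization under $A\mapsto\overline{A}$, $A\mapsto\lambda A$ with $|\lambda|=1$, and $A\mapsto\Delta A\Delta$---is a clean way to make rigorous a point the paper leaves implicit; note also that the defining relations of $C_4^{(R)}$ and $C_4^{(S)}$ are only required for $j\in\{1,\dots,M/2\}$, so your $j=0$ discussion is not strictly needed (and the structure of $\tilde U_0$ from Proposition~\ref{proj:C1} already forces $\tilde U_{M/2}=\sigma\tilde U_0$ to satisfy the self-relation at $j=M/2$).
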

\begin{proof}
	\ref{t:PC1_C4R}: Let $\mathcal{U}\in C_4^{(R)}$ and $\tilde{\mathcal{U}} \in P_{C_1}(\mathcal{U})$. Then $U_j = \overline{U_{M-j}}$ and $U_j = \sigma U_{j+\frac{M}{2}}$ for $j \in \{0,1,\dots, \frac{M}{2}\}$. Consequently, $U_0$ and $U_{\frac{M}{2}}$ have real entries. We deduce from Proposition~\ref{proj:C1} that $\tilde{U}_0$ and $\tilde{U}_{\frac{M}{2}}$ will also have real entries. For $j \in \left\{1,2,\dots,\frac{M}{2}-1\right\}$, again by Proposition~\ref{proj:C1}, $\tilde{U}_j = X_jY_j^{\ast}$, where $U_j = X_j\Sigma_jY_j^{\ast}$ is a singular value decomposition for $U_j$. Moreover,
	\begin{equation*}
		U_{M-j} = \overline{U_j} =\overline{X_j\Sigma_jY_j^{\ast}} = \overline{X_j}\Sigma_j\overline{Y_j^{\ast}}
	\end{equation*}
	is a singular value decomposition for $U_{M-j}$, and so $\tilde{U}_{M-j} = \overline{X_jY_j^{\ast}} = \overline{\tilde{U}_j}$. Therefore, $\tilde{\mathcal{U}} \in C_4^{(R)}$.
	
	\ref{t:PC1_C4S}: Let $\mathcal{U}\in C_4^{(S)}$ and $\tilde{\mathcal{U}} \in P_{C_1}(\mathcal{U})$. Then $U_j = e^{2\pi i (M-1)j/M}U_{M-j}^{\dagger}$ and $U_j = \sigma U_{j+\frac{M}{2}}$ for $j \in \{0,1,\dots, \frac{M}{2}\}$. In particular, $U_0 = U_0^{\dagger}$ and $U_{\frac{M}{2}} = -U_{\frac{M}{2}}^{\dagger}$. We know from Proposition~\ref{proj:C1} that $\tilde{U}_0$ is diagonal and deduce that $\tilde{U}_0 = \tilde{U}_0^{\dagger}$ and $\tilde{U}_{\frac{M}{2}} = \sigma\tilde{U}_0 = -(\sigma \tilde{U}_0)^{\dagger} = -\tilde{U}_{\frac{M}{2}}^{\dagger}$. For $j \in \left\{1,2,\dots,\frac{M}{2}-1\right\}$, we also learn from Proposition~\ref{proj:C1} that $\tilde{U}_j = X_jY_j^{\ast}$, where $U_j = X_j\Sigma_jY_j^{\ast}$ is a singular value decomposition for $U_j$. Denote $s = e^{-2\pi i (M-1)j/M}$ and $\tau = \text{diag}(-1,1) \in \mathbb{C}^{2\times 2}$. Then
	\begin{align*}
		U_{M-j} = e^{-2\pi i (M-1)j/M} U_j^{\dagger}
		= s\tau U_j \tau^{\ast}
		= (s\tau  X_j)\Sigma_j(Y_j^{\ast} \tau^{\ast})
	\end{align*}
	is a singular value decomposition for $U_{M-j}$ since $|s|=1$ and $\tau$ is unitary. Hence, 
	\begin{align*}
		\tilde{U}_{M-j} = (s\tau X_j) (Y_j^{\ast}\tau^{\ast})
		= s(\tau X_jY_j^{\ast}\tau^{\ast})
		= s( X_jY_j^\ast)^{\dagger}
		= e^{-2\pi i (M-1)j/M} \tilde{U}_j^{\dagger},
	\end{align*}
	and we deduce that $\tilde{\mathcal{U}} \in C_4^{(S)}$.
\end{proof}

The results in Theorem~\ref{t:PC1} further justify our choice of $C_1$ and $C_4^{(S)}$ as the pair of constraints to replace with their intersection for constraint reduction reformulation of Problem~\ref{prob:waveletproblemsymm}. Similarly, the pair of $C_1$ and $C_4^{(R)}$ is the natural choice for Problem~\ref{prob:waveletproblemreal}. We will solve these problems in the next subsection. 

We note that a solution $\mathcal{U}=(U_0,\dots,U_{M-1})$ of Problem \ref{prob:waveletproblemsymm} or \ref{prob:waveletproblemreal} contains the $M$ samples of $U(\xi)$ from which we recover the coefficients $A_k$ using \eqref{A_k}. Consequently, the coefficients of the scaling filter $m_0(\xi)$ and wavelet filter $m_1(\xi)$ may be easily pulled out from the $A_k$'s. Through the \emph{cascade algorithm} applied to the coefficients of $m_0$ and $m_1$, we may be able to plot the scaling function $\phi$ and wavelet $\psi$, respectively.

\subsection{Numerical Experiments}

The wavelet feasibility problems defined in Problems \ref{prob:waveletproblemsymm}--\ref{prob:waveletproblemreal} can be straightforwardly reformulated to a two-set feasibility problem using the product space reformulation defined in Section \ref{sec:productspacetrick}. The product DR and MAP are then employable to solve the two-set problem. Alternatively, we may apply the constraint-reduction reformulation to the problems at hand. We abuse notation by consistently denoting the reduced product space constraints as in Definition \ref{constraintreduction} for both problems.\\

\noindent \textit{Constraint-reduction reformulation for Problem~\ref{prob:waveletproblemsymm}}: The product space constraints for obtaining symmetric wavelets are defined by
\begin{align*}
	V &:=\left(C_1\cap C_4^{(S)}\right)\times C_2 \times C_3 \subseteq \left((\mathbb{C}^{2\times 2})_{\sigma}^M\right)^3,\\
	W &:= \left\{(\mathcal{U}_j)_{j=1}^{3} \in \left((\mathbb{C}^{2\times 2})_{\sigma}^M\right)^3: \mathcal{U}_1 = \mathcal{U}_2 = \mathcal{U}_3\right\}.
\end{align*}

\noindent \textit{Constraint-reduction reformulation for Problem~\ref{prob:waveletproblemreal}}: The product space constraints for obtaining real-valued wavelets are defined by
\begin{align*}
	V &:=\left(C_1\cap C_4^{(R)}\right)\times C_2 \times C_3 \subseteq \left((\mathbb{C}^{2\times 2})_{\sigma}^M\right)^3,\\
	W &:= \left\{(\mathcal{U}_j)_{j=1}^{3} \in \left((\mathbb{C}^{2\times 2})_{\sigma}^M\right)^3: \mathcal{U}_1 = \mathcal{U}_2 = \mathcal{U}_3\right\}.
\end{align*}

The associated operators $Q_V$ and $P_W$ for both the two new problems are defined similar to what appeared in Definition \ref{constraintreduction}.

For constructing symmetric wavelets, we will solve two cases of Problem~\ref{prob:waveletproblemsymm} where $(M,D)=(6,2)$ and $(M,D)=(6,1)$. Similarly, for real-valued wavelets, we work out two cases of Problem~\ref{prob:waveletproblemreal} corresponding to the parameters $(M,D)=(6,2)$ and $(M,D)=(6,1)$. For each particular problem, we employ product DR, constraint-reduced DR, product MAP, and constraint-reduced MAP. However, we only compare the performance of product DR against constraint-reduced DR, and the performance of product MAP against constraint-reduced MAP. Henceforth, we let $(x_n)_{n\in \mathbb{N}}$ be the sequence of iterates generated by a projection algorithm. We will employ a particular algorithm to a problem twice using two different tolerance values, namely, $\varepsilon =10^{-6}$ and $\varepsilon =10^{-9}$. For the DR variant, we use the stopping criterion given by $\|Q_VP_W(x_n) - P_W(x_n)\| < \varepsilon$ which when satisfied indicates that $P_W(x_n)$ can be declared as a feasible point. Similarly for constraint-reduced MAP, we set a stopping criterion $\|Q_V(x_n) - x_n\| <\varepsilon$ to decide that the iterate $x_n$ lies on the intersection of $V$ and $W$. We consider a projection algorithm to have \emph{solved} our feasibility problem if and when it attains a point that satisfies the stopping criterion within the cutoff of $50,000$ iterates. For our numerical results, we provide statistics on the \emph{number of iterations} which we mainly consider as performance measure. We also look at the average running time of an algorithm in solving a particular problem. Additionally, we comment on the versatility of an algorithm in tackling the nonconvex wavelet feasibility problem by counting the number of times it solves a particular problem, initialized at $1,000$ ensembles that satisfy the consistency condition and with complex entries having real and imaginary parts chosen from uniformly distributed random number in the interval $(0,1)$. All datasets generated and analysed in this study are available from the corresponding author on request.

\subsubsection*{Symmetric Wavelets}

In constructing symmetric wavelets, we solve Problem~\ref{prob:waveletproblemsymm}. We employ the product DR and MAP, and their constraint-reduced variants to solve the product space and constraint-reduced versions of Problem \ref{prob:waveletproblemsymm} with $(M,D)=(6,2)$ and $(M,D)=(6,1)$. We only compare the performance of product DR with that of constraint-reduced DR, doing the same for MAP. In this way, we are essentially comparing the robustness of the product space  and constraint reduction reformulations.

Table~\ref{tab:symmDR} summarizes the performance of DR in solving Problem~\ref{prob:waveletproblemsymm} using product space and constraint reduction reformulations. In all versions of the problem considered, the constraint-reduced DR solved every test case while product DR failed in a number of cases. In instances where both algorithms converged, the constraint-reduced DR used up lesser number of iterations in at least 78\% of the time. This suggests that the constraint-reduced DR outperforms product DR as also reflected in the computed mean and median number of iterations. The average running time (in seconds) of the constraint-reduced DR is also better than that of the product DR.

\begin{table}[h!]
	\centering
	\resizebox{\textwidth}{!}{%
	\begin{tabular}{@{}cccccccccc@{}}
		\toprule
		\multirow{2}{*}{\textbf{\begin{tabular}[c]{@{}c@{}}Problem\\ parameters\end{tabular}}} & \multirow{2}{*}{\textbf{$\varepsilon$}} & \multirow{2}{*}{\textbf{algorithm}} & \multirow{2}{*}{\textbf{\begin{tabular}[c]{@{}c@{}}cases\\ solved\end{tabular}}} & \multirow{2}{*}{\textbf{\begin{tabular}[c]{@{}c@{}}solved\\ alone\end{tabular}}} & \multirow{2}{*}{\textbf{\begin{tabular}[c]{@{}c@{}}solved \\ by both\end{tabular}}} & \multicolumn{4}{c}{\textbf{when solved by both}} \\ \cmidrule(l){7-10} 
		&  &  &  &  &  & \textbf{wins} & \textbf{mean} & \textbf{median} & \textbf{running time} \\ \midrule
		\multirow{4}{*}{M=6, D=2} & \multirow{2}{*}{$10^{-6}$} & P--DR & 940 & 0 & 940 & 187 & 3862 & 3393 & 8.6575 \\
		&  & CR--DR & 1000 & 60 & 940 & 753 & 3223 & 2735 & 7.1764 \\ \cmidrule(l){2-10} 
		& \multirow{2}{*}{$10^{-9}$} & P--DR & 956 & 0 & 956 & 206 & 6969 & 5941 & 15.0907 \\
		&  & CR--DR & 1000 & 44 & 956 & 750 & 5439 & 4754 & 11.6524 \\ \midrule
		\multirow{4}{*}{M=6, D=1} & \multirow{2}{*}{$10^{-6}$} & P--DR & 934 & 0 & 934 & 148 & 4076 & 3456 & 8.8109 \\
		&  & CR--DR & 1000 & 66 & 934 & 786 & 3073 & 2724 & 6.5987 \\ \cmidrule(l){2-10} 
		& \multirow{2}{*}{$10^{-9}$} & P--DR & 934 & 0 & 934 & 154 & 7262 & 5988 & 16.1757 \\
		&  & CR--DR & 1000 & 66 & 934 & 780 & 5378 & 4721 & 11.8203 \\ \bottomrule
	\end{tabular}}
	\caption{Statistics on the performance of product DR (P--DR) and constraint-reduced DR (CR--DR) for wavelet feasibility problems with symmetry constraint.}
	\label{tab:symmDR}
\end{table}

Similarly, Table~\ref{tab:symmMAPs} highlights our results for cases where MAP is used to solve the feasibility problem through product space and constraint reduction reformulations. In our statistics, the two algorithms solved all test cases with constraint-reduced MAP incurring lesser number of iterations in at least 97\% of the time. This suggests that constraint-reduced MAP outperforms product MAP in this sense as can also be seen in the computed mean and median number of iterations for both algorithms. Moreover, constraint-reduced MAP has exhibited a consistently favorable running time as compared to product MAP.

\begin{table}[h!]
	\centering
	\resizebox{\textwidth}{!}{%
	\begin{tabular}{@{}cccccccccc@{}}
		\toprule
		\multirow{2}{*}{\textbf{\begin{tabular}[c]{@{}c@{}}Problem\\ parameters\end{tabular}}} & \multirow{2}{*}{\textbf{$\varepsilon$}} & \multirow{2}{*}{\textbf{algorithm}} & \multirow{2}{*}{\textbf{\begin{tabular}[c]{@{}c@{}}cases\\ solved\end{tabular}}} & \multirow{2}{*}{\textbf{\begin{tabular}[c]{@{}c@{}}solved\\ alone\end{tabular}}} & \multirow{2}{*}{\textbf{\begin{tabular}[c]{@{}c@{}}solved \\ by both\end{tabular}}} & \multicolumn{4}{c}{\textbf{when solved by both}} \\ \cmidrule(l){7-10} 
		&  &  &  &  &  & \textbf{wins} & \textbf{mean} & \textbf{median} & \textbf{running time} \\ \midrule
		\multirow{4}{*}{M=6, D=2} & \multirow{2}{*}{$10^{-6}$} & P--MAP & 1000 & 0 & 1000 & 26 & 3337 & 3474 & 3.0885 \\
		&  & CR--MAP & 1000 & 0 & 1000 & 974 & 2521 & 2599 & 2.2512 \\ \cmidrule(l){2-10} 
		& \multirow{2}{*}{$10^{-9}$} & P--MAP & 1000 & 0 & 1000 & 2 & 5528 & 5648 & 4.5243 \\
		&  & CR--MAP & 1000 & 0 & 1000 & 998 & 4157 & 4232 & 3.3080 \\ \midrule
		\multirow{4}{*}{M=6, D=1} & \multirow{2}{*}{$10^{-6}$} & P--MAP & 1000 & 0 & 1000 & 23 & 3389 & 3477 & 2.4394 \\
		&  & CR--MAP & 1000 & 0 & 1000 & 977 & 2516 & 2600 & 1.7688 \\ \cmidrule(l){2-10} 
		& \multirow{2}{*}{$10^{-9}$} & P--MAP & 1000 & 0 & 1000 & 4 & 5569 & 5655 & 4.1696 \\
		&  & CR--MAP & 1000 & 0 & 1000 & 996 & 4149 & 4232 & 3.0308 \\ \bottomrule
	\end{tabular}}
	\caption{Statistics on the performance of product MAP (P--MAP) and constraint-reduced MAP (CR--MAP) for wavelet feasibility problems with symmetry constraint.}
	\label{tab:symmMAPs}
\end{table}

It is also noteworthy that based on our statistics, MAP's variants are more effective than DR's in finding symmetric wavelets. Figure~\ref{fig:symmetricexample} shows an example of a symmetric scaling function and an anti-symmetric wavelet generated by solving Problem \ref{prob:waveletproblemsymm} with $(M,D)=(6,2)$. 

\begin{figure}[h!]
	\centering
	\subfloat[A symmetric scaling function.\label{fig:symmetricscaling}]{\includegraphics[width = 0.4\linewidth]{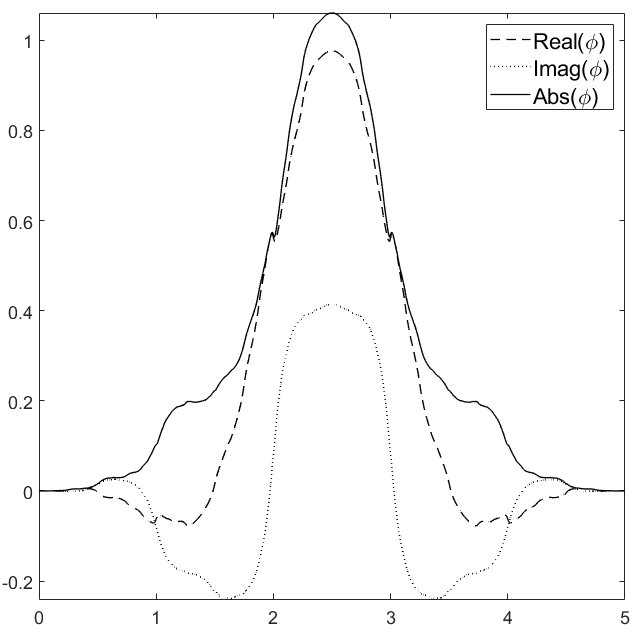}}\hspace{0.5cm}
	\subfloat[An anti-symmetric wavelet.\label{fig:symmetricwavelet}]{\includegraphics[width = 0.4\linewidth]{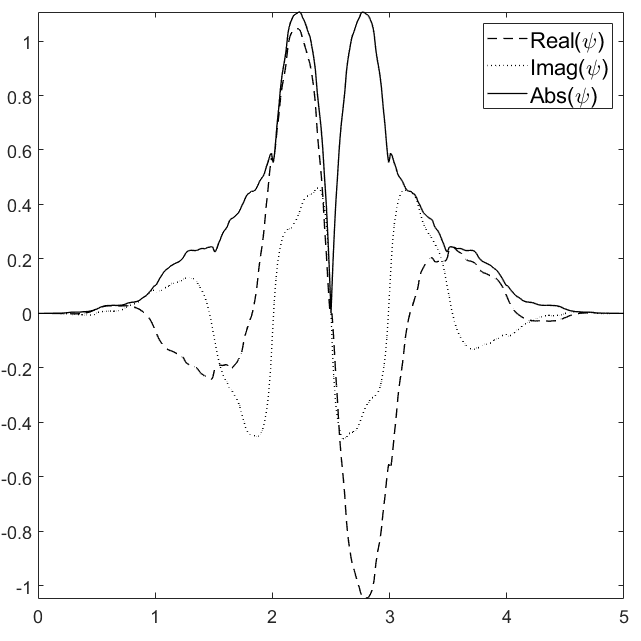}}\\
	\caption{Complex-valued compactly supported smooth scaling function and wavelet with symmetry properties obtained by solving Problem~\ref{prob:waveletproblemsymm} with $(M,D)=(6,2)$.}  \label{fig:symmetricexample}
\end{figure}

\subsubsection*{Real-valued Wavelets}

To construct real-valued wavelets, we need to deal with Problem~\ref{prob:waveletproblemreal}. We employ the product DR and MAP, and their constraint-reduced variants in the two problems where $(M,D)=(6,2)$ and $(M,D)=(6,1)$.

Table~\ref{tab:realDR} shows the performance of DR in solving Problem~\ref{prob:waveletproblemreal} using product space and constraint reduction reformulations. For the particular problem where $(M,D)=(6,2)$ and $\varepsilon = 10^{-9}$, the product DR solved more cases than constraint-reduced DR. Nevertheless, in cases where both algorithms solved the feasibility problem, the constraint-reduced DR used up lesser number of iterations 97\% of the time. This suggests that the constraint-reduced DR outperforms product DR in terms of the number of iterations. This claim is supported by the computed mean and median number of iterations for the contraint-reduced DR that are less than that of the product DR. Moreover, the average running time (in seconds) of constraint-reduced DR is better than product DR. Similar results are observed for the problem where $(M,D)=(6,2)$ and $\varepsilon = 10^{-6}$. For the problems where $(M,D)=(6,1)$ with the tolerance values $10^{-6}$ and $10^{-9}$, the constraint-reduced DR outperforms the product version in terms of number of iterations and running time.

\begin{table}[h!]
	\centering
	\resizebox{\textwidth}{!}{%
	\begin{tabular}{@{}cccccccccc@{}}
		\toprule
		\multirow{2}{*}{\textbf{\begin{tabular}[c]{@{}c@{}}Problem\\ parameters\end{tabular}}} & \multirow{2}{*}{\textbf{$\varepsilon$}} & \multirow{2}{*}{\textbf{algorithm}} & \multirow{2}{*}{\textbf{\begin{tabular}[c]{@{}c@{}}cases\\ solved\end{tabular}}} & \multirow{2}{*}{\textbf{\begin{tabular}[c]{@{}c@{}}solved\\ alone\end{tabular}}} & \multirow{2}{*}{\textbf{\begin{tabular}[c]{@{}c@{}}solved \\ by both\end{tabular}}} & \multicolumn{4}{c}{\textbf{when solved by both}} \\ \cmidrule(l){7-10} 
		&  &  &  &  &  & \textbf{wins} & \textbf{mean} & \textbf{median} & \textbf{running time} \\ \midrule
		\multirow{4}{*}{M=6, D=2} & \multirow{2}{*}{$10^{-6}$} & P--DR & 619 & 262 & 357 & 21 & 771 & 584 & 1.5704 \\
		&  & CR--DR & 497 & 140 & 357 & 336 & 618 & 457 & 1.2569 \\ \cmidrule(l){2-10} 
		& \multirow{2}{*}{$10^{-9}$} & P--DR & 619 & 262 & 357 & 11 & 1147 & 959 & 1.8231 \\
		&  & CR--DR & 497 & 140 & 357 & 346 & 1053 & 740 & 1.6551 \\ \midrule
		\multirow{4}{*}{M=6, D=1} & \multirow{2}{*}{$10^{-6}$} & P--DR & 827 & 334 & 493 & 64 & 560 & 301 & 0.8113 \\
		&  & CR--DR & 581 & 88 & 493 & 429 & 252 & 183 & 0.3615 \\ \cmidrule(l){2-10} 
		& \multirow{2}{*}{$10^{-9}$} & P--DR & 827 & 334 & 493 & 72 & 693 & 467 & 1.1542 \\
		&  & CR--DR & 581 & 88 & 493 & 421 & 358 & 283 & 0.5914 \\ \bottomrule
	\end{tabular}}
	\caption{Statistics on the performance of product DR (P--DR) and constraint-reduced DR (CR--DR) for wavelet feasibility problems with real-valuedness constraint.}
	\label{tab:realDR}
\end{table}

Similarly, Table~\ref{tab:realMAPs} summarizes our results when MAP is used to solve the feasibility problem through product space  and constraint reduction reformulations. In our statistics for both problems corresponding to $(M,D)=(6,2)$ and $(M,D)=(6,1)$ under two different values for $\varepsilon$, the two algorithms performed closely in terms of their efficacy to solve the feasibility problem. For the problem with $(M,D)=(6,2)$, product MAP solved a few more problems than the constraint-reduced version. However, when $(M,D)=(6,1)$, constraint-reduced MAP solved more cases than product DR. In cases where both algorithms solved the feasibility problem, the constraint-reduced MAP consistently rendered lesser number of iterations, outperforming the product MAP. These are reflected in the computed mean and median number of iterations for both algorithms. Constraint-reduced MAP also exhibited a consistently favorable running time.

\begin{table}[h!]
	\centering
	\resizebox{\textwidth}{!}{%
	\begin{tabular}{@{}cccccccccc@{}}
		\toprule
		\multirow{2}{*}{\textbf{\begin{tabular}[c]{@{}c@{}}Problem\\ parameters\end{tabular}}} & \multirow{2}{*}{\textbf{$\varepsilon$}} & \multirow{2}{*}{\textbf{algorithm}} & \multirow{2}{*}{\textbf{\begin{tabular}[c]{@{}c@{}}cases\\ solved\end{tabular}}} & \multirow{2}{*}{\textbf{\begin{tabular}[c]{@{}c@{}}solved\\ alone\end{tabular}}} & \multirow{2}{*}{\textbf{\begin{tabular}[c]{@{}c@{}}solved \\ by both\end{tabular}}} & \multicolumn{4}{c}{\textbf{when solved by both}} \\ \cmidrule(l){7-10} 
		&  &  &  &  &  & \textbf{wins} & \textbf{mean} & \textbf{median} & \textbf{running time} \\ \midrule
		\multirow{4}{*}{M=6, D=2} & \multirow{2}{*}{$10^{-6}$} & P--MAP & 264 & 95 & 169 & 0 & 355 & 354 & 0.2621 \\
		&  & CR--MAP & 235 & 66 & 169 & 169 & 264 & 263 & 0.1890 \\ \cmidrule(l){2-10} 
		& \multirow{2}{*}{$10^{-9}$} & P--MAP & 264 & 95 & 169 & 0 & 543 & 542 & 0.4549 \\
		&  & CR--MAP & 235 & 66 & 169 & 169 & 404 & 403 & 0.3294 \\ \midrule
		\multirow{4}{*}{M=6, D=1} & \multirow{2}{*}{$10^{-6}$} & P--MAP & 112 & 32 & 80 & 1 & 90 & 88 & 0.0731 \\
		&  & CR--MAP & 158 & 78 & 80 & 79 & 66 & 63 & 0.0525 \\ \cmidrule(l){2-10} 
		& \multirow{2}{*}{$10^{-9}$} & P--MAP & 112 & 32 & 80 & 1 & 133 & 129 & 0.1059 \\
		&  & CR--MAP & 158 & 78 & 80 & 79 & 97 & 93 & 0.0757 \\ \bottomrule
	\end{tabular}}
	\caption{Statistics on the performance of product MAP (P--MAP) and constraint-reduced MAP (CR--MAP) for wavelet feasibility problems with real-valuedness constraint.}
	\label{tab:realMAPs}
\end{table}

In contradistinction, MAP is not as robust as DR in solving the two cases of Problem~\ref{prob:waveletproblemreal} that we have considered, as suggested by the total number of test runs that MAP and DR solved.  Figure~\ref{fig:realexample} shows an example of real-valued scaling function-wavelet pair generated by solving Problem \ref{prob:waveletproblemreal} with $(M,D)=(6,2)$. This wavelet is exactly Daubechies' $_3\psi$ wavelet which is known to have the maximal number of vanishing moments for its length of support \cite[Chapter~5]{daubechies}. Other solutions may be obtained by lowering the requirement on regularity as in the case where $(M,D)=(6,1)$.

\begin{figure}[h!]
	\centering
	\subfloat[A real-valued scaling function.\label{fig:realscaling}]{\includegraphics[width = 0.4\linewidth]{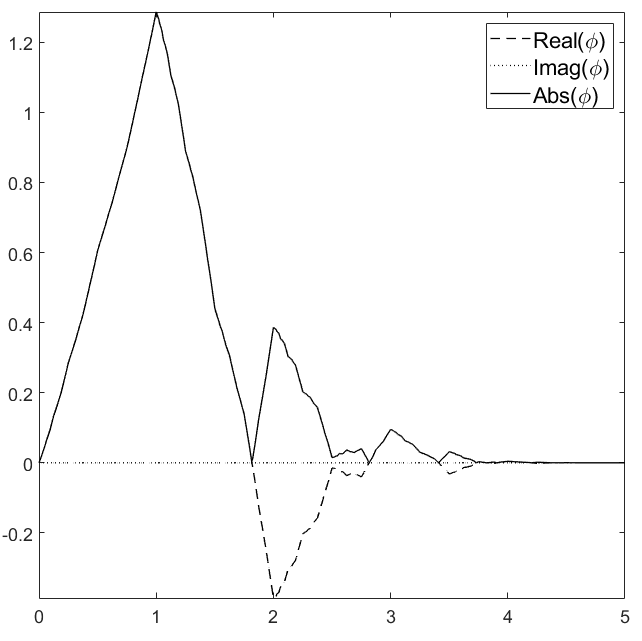}}\hspace{0.5cm}
	\subfloat[A real-valued wavelet.\label{fig:realwavelet}]{\includegraphics[width = 0.4\linewidth]{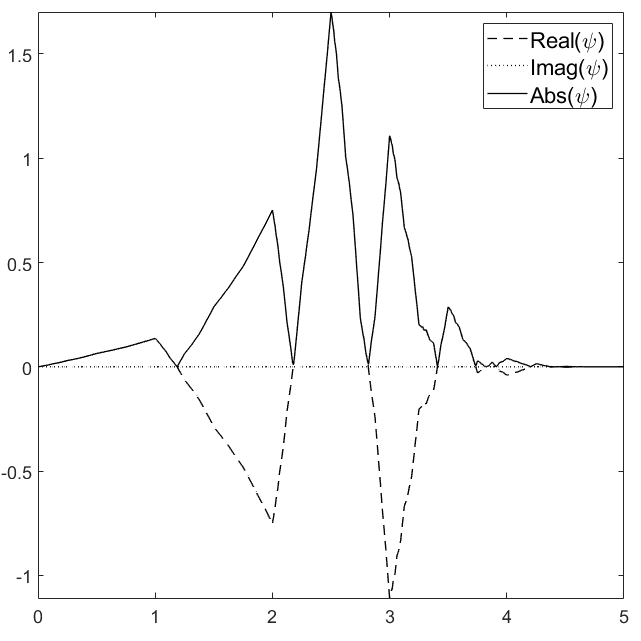}}\\
	\caption{Real-valued compactly supported smooth scaling function and wavelet obtained by solving Problem~\ref{prob:waveletproblemreal} with $M=6$ and $D=2$. These coincide with Daubechies' $_3\phi$ scaling function and $_3\psi$ wavelet.}  \label{fig:realexample}
\end{figure}

\section{Conclusions}

We have introduced a constraint reduction reformulation for converting many-set feasibility problems into two-set problems. It provides an equivalent formulation of many-set feasibility problems by replacing a pair of its constraint sets with their intersection, before applying Pierra's classical product space reformulation. Our new reformulation gives rise to constraint-reduced variants of any projection algorithm that can be used to solve two-set feasibility problems. We have presented a global convergence analysis for the constraint-reduced variants of DR and MAP in the convex setting, and a local convergence analysis in a nonconvex setting. In carrying out the analysis for the constraint-reduced DR, we have generalized a well-known result which guarantees that the composition of two projectors onto subspaces is again a projector onto the intersection. Even when the constraint sets do not possess the additional structure required, the constraint-reduced variants of projection algorithms still serve as useful heuristics for solving nonconvex feasibility problems.

The required property among the constraint sets for the convergence of constraint-reduced DR appear exactly in the wavelet feasibility problems so it provided us a suitable venue for numerical implementations of the new reformulation technique. In certain cases, the performance of constraint-reduced DR and MAP has been seen as improvement over their usual product variants.

\subsection*{Acknowledgments}
The authors would like to thank Scott Lindstrom for his helpful insights on Theorem~\ref{t:main}, and Hui Ouyang for her constructive inputs. The authors are also grateful to the reviewers for their valuable feedback and insightful comments. The authors were partially supported by the Australian Research Council through grants DP160101537 (MND, NDD, and JAH), DP190100555 (MND), and DE200100063 (MKT).

\bibliographystyle{siam}
\bibliography{refs}

\end{document}